\newtheorem{theorem}{Theorem}[section]
\newtheorem{problem}[theorem]{Problem}
\newtheorem{conjecture}[theorem]{Conjecture}
\newcommand{\Sz}{\hbox{\rm Sz}\,}
\title{Mathematical aspects of Wiener index}
\author
{ Martin Knor\thanks{Slovak University of Technology in Bratislava,
Faculty of Civil Engineering, Department of Mathematics, Bratislava,
Slovakia. E-Mail: \texttt{knor@math.sk}},\quad Riste
\v{S}krekovski\thanks{FMF, University of Ljubljana \& Faculty of Information Studies, Novo mesto \&
Institute of Mathematics, Physics and Mechanics, Ljubljana \&
University of Primorska, FAMNIT, Koper, Slovenia. E-Mail:
\texttt{skrekovski@gmail.com}},\quad Aleksandra Tepeh\thanks{Faculty
of Information Studies, Novo mesto \& Faculty of Electrical
Engineering and Computer Science, University of Maribor, Slovenia.
E-Mail: \texttt{aleksandra.tepeh@gmail.com}} }
\begin{document}

\maketitle

{\abstract
{
The Wiener index (i.e., the total distance or the transmission number),
defined as the sum of distances between all unordered pairs of vertices
in a graph, is one of the most popular molecular descriptors.
In this article we summarize some results, conjectures and problems
on this molecular descriptor, with emphasis on works we were involved in.
}}


\section{Introduction}

Having a molecule, if we represent atoms by vertices and bonds by edges, we
obtain a molecular graph, \cite{Pogliani,Randic}.
Graph theoretic invariants of molecular graphs, which predict properties of
the corresponding molecule, are known as topological indices.
The oldest topological index is the Wiener index \cite{Wiener},
which was introduced in $1947$ as the path number.

At first, Wiener index was used for predicting the boiling points of
paraffins \cite{Wiener}, but later strong correlation between Wiener index and
the chemical properties of a compound was found.
Nowadays this index is a tool used for preliminary screening
of drug molecules \cite{Arg-00}.
Wiener index  also predicts binding energy of protein-ligand complex
at a preliminary stage.

Hence, Wiener index was used by chemists decades before it attracted
attention of mathematicians.
In fact, it was studied long time before the branch of discrete mathematics,
which is now known as Graph Theory, was developed.
Many years after its introduction, the same quantity has been studied
and referred to by mathematicians as the gross status \cite{Harary},
the distance of graphs \cite{EJS} and the transmission \cite{Soltes}.
A great deal of knowledge on Wiener index is accumulated in several survey papers \cite{surv1,petra,DM3,KS_chapter,Gut-sur}. This paper is also of similar kind and it appears in a volume dedicated
to A. Graovac, whose wide research opus of mathematical chemistry includes also works of Wiener index, e.g., see~\cite{Grao4,ori,Grao1,Grao2,Grao3}.

Let $d(u,v)$ denote the distance between vertices $u$ and $v$ in $G$. \emph{Wiener index} of a graph $G$, denoted by $W(G)$, is the sum
of distances between all (unordered) pairs of vertices of $G$
\begin{equation}
\label{WienerIndex}
     W(G)=\displaystyle\sum_{\{u,v\} \subseteq V(G)} d(u,v).
\end{equation}

Though Wiener index is the most common topological index,
nowadays we know over 200 topological indices used in chemistry.
Here we mention three of them, those, which can be considered
as weighted versions of Wiener index.

For an edge $e = ij$, let $n_e(i)$ be the number of vertices
of $G$ being closer to $i$ than to $j$ and let $n_e(j)$ be the
number of vertices of $G$ lying closer to $j$ than to $i$.
The {\em Szeged} index of a graph $G$ is defined by
$$
\Sz(G) =  \sum_{e=ij\in E(G)} n_e(i) n_e(j).
$$
This invariant was introduced by Gutman~\cite{G94} during his stay
at the Attila Jozsef University in Szeged, and he named it after
this place.

In 1989, lead by the idea of characterizing the alkanes,
Schultz~\cite{Schultz} defined a new index ${\rm MTI}(G)$ that is
degree and distance based. Gutman decomposed this index into two
parts and called one of them {\it Schultz index (of the first kind)},
which is defined by
$$
S(G)=\sum_{\{u,v\} \subseteq V(G)}\left(d(u)+d(v)\right)d(u,v)\,,
$$
where $d(v)$ denotes the degree of $v$. The same invariant was independently and simultaneously introduced by
Dobrynin and Kochetova~\cite{DK}.

Gutman~\cite{g-spsmti-94} also introduced a new index,
$$
\mathrm{Gut}(G) = \displaystyle\sum_{\{u,v\} \subseteq V(G)} d(u) d(v) d(u,v)\,,
$$
and named it the {\it Schultz index of the second kind}.
Nowadays this index is also known as the {\it Gutman} index.

In this paper we consider mathematical aspects of Wiener index.
This is not a typical survey.
We summarize our mathematical work on this molecular descriptor
over the past years and, what is more important, we integrate some
conjectures, problems, thoughts, and ideas for possible future work
that we find interesting.
We include also a couple of related open problems that have been considered
by other authors.


\section{Some fundamental properties of Wiener index}

Already in 1947, Wiener has shown that Wiener index of a tree
can be decomposed into easily calculable edge--contributions.
In what follows, by $n(G)$ we denote the number of vertices of $G$.
Let $F$ be a graph with $p$ components, $T_1,T_2,\dots,T_p$.
Then we set
$$
N_2(F) = \sum_{1 \leq i < j \leq p} n(T_i)\,n(T_j) \, .
$$
If $p=1$, that is if $F$ is connected, then $N_2(F)=0$.

\begin{theorem}[Wiener, 1947]
\label{W.th}
For a tree $T$ the following holds
  \begin{equation}  \label{W.eq}
    W(T) =  \sum_{e\in E(T)} N_2(T-e).
  \end{equation}
\end{theorem}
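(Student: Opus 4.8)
The plan is to count, for each unordered pair $\{u,v\}$ of vertices of $T$, how many times it contributes to each side of \eqref{W.eq}, and show the two counts agree. On the left-hand side the pair $\{u,v\}$ contributes exactly $d(u,v)$. On the right-hand side, a given edge $e$ contributes $n(T_i)\,n(T_j)$ where $T_1,\dots,T_p$ are the components of $T-e$; since removing a single edge from a tree yields exactly $p=2$ components, say $T_e'$ and $T_e''$, the term $N_2(T-e)$ is simply $n(T_e')\cdot n(T_e'')$, i.e. the number of unordered pairs of vertices separated by $e$. So the right-hand side equals
\begin{equation*}
\sum_{e\in E(T)} \bigl|\{\{u,v\} : e \text{ lies on the } u\text{--}v \text{ path}\}\bigr|
= \sum_{\{u,v\}\subseteq V(T)} \bigl|\{e\in E(T) : e \text{ lies on the } u\text{--}v \text{ path}\}\bigr|,
\end{equation*}
where the equality is just interchanging the order of summation (double counting the set of incidences between edges and vertex-pairs they separate).

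The key step is then the elementary observation that in a \emph{tree} there is a unique path between $u$ and $v$, and its length is by definition $d(u,v)$; hence the number of edges lying on the $u$--$v$ path is exactly $d(u,v)$. Moreover an edge $e$ separates $u$ from $v$ (i.e. $u$ and $v$ fall in different components of $T-e$) if and only if $e$ lies on the unique $u$--$v$ path — this is where acyclicity is used, since in a graph with cycles an edge of the $u$--$v$ path need not be a cut edge separating them. Substituting $d(u,v)$ for the inner count on the right-hand side gives exactly $\sum_{\{u,v\}\subseteq V(T)} d(u,v) = W(T)$, which is \eqref{WienerIndex}.

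The only real content to verify carefully is the equivalence ``$e$ on the $u$--$v$ path $\iff$ $e$ separates $u$ and $v$ in $T-e$,'' together with the fact that $T-e$ has precisely two components (so that $N_2(T-e)$ reduces to a single product). Both follow from standard facts about trees: every edge of a tree is a bridge, and deleting a bridge from a connected graph increases the number of components by exactly one. I would state these as a short preliminary remark and then the computation above is essentially a one-line Fubini-style interchange. I do not anticipate a genuine obstacle here; the ``hard part,'' such as it is, is merely being careful that the definition of $N_2$ specializes correctly when $F = T-e$ has two components and recording why the cycle-free hypothesis is essential (the identity fails for general connected graphs, where one must instead use $\sum_e n_e(i)n_e(j)$-type cut counts without the clean telescoping to distances).
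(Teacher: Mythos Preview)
Your argument is correct and is the standard double-counting proof of this classical identity. The paper itself does not supply a proof of Theorem~\ref{W.th}; it simply attributes the result to Wiener (1947), records that for a tree $T-e$ has exactly two components so that $N_2(T-e)=n_e(i)\,n_e(j)$, and moves on to the reformulation~\eqref{W.eq2}. There is therefore nothing to compare against beyond noting that your proof is exactly the expected one.
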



Since $T$ is a tree, for every edge $e=ij$ of $T$, the forest $T-e$ is comprised of two components,
one of size $n_e(i)$ and the other of size $n_e(j)$, which gives $N_2(T-e) = n_e(i) n_e(j)$.
Thus, one can restate (\ref{W.eq}) as
\begin{equation}
\label{W.eq2}
  W(T) =  \sum_{e=ij\in E(T)} n_e(i) n_e(j).
\end{equation}
So the Szeged and Wiener indices coincide on trees.
In fact, the Szeged index was defined from (\ref{W.eq2}) by relaxing
the condition that the graph is a tree.

In analogy to the classical Theorem~\ref{W.th}, we have the following vertex
version (see \cite{GS}):

\begin{theorem}
\label{Wv.th2}
Let $T$ be a tree on $n$ vertices.
Then
\begin{equation}
\label{vw}
  W(T) = \sum_{v \in V(T)} N_2(T-v) + \binom{n}{2} \,.
\end{equation}
\end{theorem}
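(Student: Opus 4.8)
The plan is to prove the identity by a double counting argument, reading each term $N_2(T-v)$ as a count of vertex pairs. First I would recall that for a vertex $v$ of $T$ the forest $T-v$ has exactly $\deg(v)$ components, and that by definition $N_2(T-v)$ counts precisely the unordered pairs $\{x,y\}\subseteq V(T)\setminus\{v\}$ for which $x$ and $y$ lie in different components of $T-v$. The key structural observation is that, since $T$ is a tree, two vertices $x,y$ (both distinct from $v$) lie in different components of $T-v$ if and only if $v$ is an \emph{internal} vertex of the unique $x$--$y$ path in $T$, that is, $v$ lies on that path but is not one of its endpoints. This is where the tree hypothesis is essential: uniqueness of paths guarantees that deleting $v$ separates exactly those pairs whose connecting path runs through $v$.

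Next I would interchange the order of summation. Summing the above characterization over all $v\in V(T)$ yields
\[
\sum_{v\in V(T)} N_2(T-v) = \sum_{\{x,y\}\subseteq V(T)} \bigl|\{\, v\in V(T) : v \text{ is an internal vertex of the } x\text{--}y \text{ path}\,\}\bigr|.
\]
The $x$--$y$ path has $d(x,y)+1$ vertices, two of which are the endpoints $x$ and $y$, hence it has exactly $d(x,y)-1$ internal vertices. (For adjacent $x,y$ this number is $0$, and for a leaf $v$ one has $N_2(T-v)=0$, both consistent with the count.) Therefore the right-hand side above equals $\sum_{\{x,y\}\subseteq V(T)}\bigl(d(x,y)-1\bigr)$.

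Finally I would simplify, using \eqref{WienerIndex} and the fact that $V(T)$ has $\binom{n}{2}$ unordered pairs:
\[
\sum_{\{x,y\}\subseteq V(T)}\bigl(d(x,y)-1\bigr) = \sum_{\{x,y\}\subseteq V(T)} d(x,y) - \binom{n}{2} = W(T) - \binom{n}{2},
\]
and rearranging gives $W(T) = \sum_{v\in V(T)} N_2(T-v) + \binom{n}{2}$, as claimed. I do not expect a genuine obstacle here beyond bookkeeping; the only point that demands care is the "if and only if" in the first step — verifying both directions and confirming that the endpoints $x,y$ are handled correctly (they are never internal vertices of their own path, and they coincide with $v$ in no term of the sum), so that every pair is counted with exactly the right multiplicity.
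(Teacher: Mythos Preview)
Your argument is correct: the double-counting interpretation of $N_2(T-v)$ as the number of pairs $\{x,y\}$ whose unique path in $T$ has $v$ as an internal vertex, followed by swapping the order of summation, is exactly the right approach, and each step is sound. Note, however, that the paper itself does not supply a proof of this theorem; it merely states the result and refers the reader to \cite{GS}, so there is no in-paper proof to compare against --- your argument is the standard one and almost certainly coincides with what appears in that reference.
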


An interesting theorem of Doyle and Graver~\cite{DG} is of a similar kind.
Let $F$ be a graph with $p$ components, $T_1,T_2,\dots,T_p$.
Then we set
$$
N_3(F) = \sum_{1 \leq i < j < k \leq p} n(T_i)\,n(T_j)\,n(T_k)\,.
$$
Note that if $p=1$ or $p=2$, then $N_3(F)=0$.  Now we state their
result, moreover we give an alternative short proof in the spirit of
combinatorial countings, more precisely involving combinations of
order 3.

\begin{theorem}[Doyle and Graver]
\label{thmDG}
Let $T$ be a tree on $n$ vertices.
Then
\begin{equation}
  W(T) + \sum_{v \in V(T)} N_3(T-v) = \binom{n+1}{3}\,.
\end{equation}
\end{theorem}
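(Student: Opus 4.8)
The plan is to read both $W(T)$ and $\sum_{v\in V(T)}N_3(T-v)$ as counts of triples of distinct vertices of $T$, sorted according to their mutual position in the tree, and then to add the two counts. Recall that for three distinct vertices $x,y,z$ of a tree the three paths joining them pairwise have a unique common vertex, the \emph{median} $m=m(x,y,z)$; the smallest subtree of $T$ containing $x,y,z$ is a path when $m\in\{x,y,z\}$, and a subdivided star (a spider) with centre $m$ and three legs ending in $x,y,z$ otherwise. Call $\{x,y,z\}$ a \emph{path triple} in the former case and a \emph{tripod triple} in the latter, and let $P$ and $Q$ denote the numbers of path triples and of tripod triples, so that $P+Q=\binom n3$.

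First I would show that $\sum_{v\in V(T)}N_3(T-v)=Q$. Expanding the definition of $N_3$, the left-hand side counts the pairs $\bigl(w,\{x,y,z\}\bigr)$ with $w\in V(T)$ and $x,y,z$ three distinct vertices of $V(T)\setminus\{w\}$ lying in three pairwise distinct components of $T-w$. For fixed distinct $x,y,z$ and $w\notin\{x,y,z\}$, the vertices $x,y,z$ lie in three pairwise distinct components of $T-w$ precisely when $w$ is an interior vertex of each of the three paths $xy$, $yz$, $zx$ (if $w$ fails to be interior to, say, $xy$, then $x$ and $y$ remain in one component of $T-w$); and a short argument about the minimal subtree spanned by $x,y,z$ shows that this happens exactly when $w=m(x,y,z)$ and $m(x,y,z)\notin\{x,y,z\}$, i.e.\ exactly when $\{x,y,z\}$ is a tripod triple and $w$ is its median. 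Hence each tripod triple is counted once and each path triple not at all, which gives the claim.

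Second, I would show that $W(T)=P+\binom n2$. In a tree, $d(u,v)-1$ is the number of vertices lying strictly between $u$ and $v$, so
\[
W(T)-\binom n2=\sum_{\{u,v\}\subseteq V(T)}\bigl(d(u,v)-1\bigr)
\]
counts the pairs consisting of an unordered pair $\{u,v\}$ and a vertex $w$ strictly between $u$ and $v$. Such a configuration is nothing but a path triple $\{u,v,w\}$ with its middle vertex $w$ singled out, and since the middle vertex of a path triple (its median) is determined by the triple, this count equals $P$; hence $W(T)-\binom n2=P$. Combining the two steps and using $\binom{n+1}{3}=\binom n3+\binom n2$,
\[
W(T)+\sum_{v\in V(T)}N_3(T-v)=\Bigl(P+\binom n2\Bigr)+Q=\binom n3+\binom n2=\binom{n+1}{3}.
\]

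The main obstacle is the equivalence used in the first step: that the vertices separating a given triple into three parts are exactly the medians of tripod triples. This rests on the structure of the minimal subtree spanned by three vertices of a tree (a path, or a subdivided star) together with the elementary remark that a vertex which is not interior to one of the three pairwise paths keeps the corresponding two vertices in a common component. Everything else is bookkeeping with binomial coefficients.
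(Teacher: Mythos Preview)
Your proof is correct and follows essentially the same route as the paper's: both arguments split the $\binom{n}{3}$ triples of vertices into path triples (counted by $W(T)-\binom{n}{2}$) and tripod triples (counted by $\sum_v N_3(T-v)$), using the median to show each tripod triple is separated by exactly one vertex. The only cosmetic difference is that the paper absorbs the extra $\binom{n}{2}$ term by adjoining a dummy element $\pi$ and assigning to each pair $\{a_0,a_k\}$ the $3$-set $\{a_0,a_k,\pi\}$, so that everything is phrased as a single partition of $\binom{V(T)\cup\{\pi\}}{3}$ rather than an application of Pascal's identity at the end.
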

\begin{proof}
Let $V(T)=\{1,\ldots,n\}$ and let $V^*=V(T) \cup \{\pi\}$. For any path
$P=a_0a_1a_2\cdots a_k$ in $T$ with $a_0<a_k$, assign the 3-set $\{a_0,a_i,a_k\}$
to the edge $a_ia_{i+1}$ for $1\le i<k$ and $\{a_0,a_k,\pi\}$
to the edge $a_0a_1$. So we assign $k$ distinct 3-sets to a path $P$ of length $k$.
This way we assign all together $W(T)$ $3$-sets. For any non-assigned 3-set
$\{a,b,c\}$ (observe that $\pi$ does not appear here) of $\binom{V^*}{3}$, $T$ has no path containing them so there is precisely one vertex $v$ (their median) that this 3-set contributes 1 to  $N_3(T-v)$. As $V^*$ is of size $n+1$, the claim is established.
\end{proof}

It is well known that for trees on $n$ vertices, the maximum
Wiener index is obtained for the path $P_n$, and the minimum
for the star $S_n$.
Thus, for every tree $T$ on $n$ vertices we have
$$
(n-1)^2 = W(S_n) \le W(T) \le W(P_n) = {n+1 \choose 3}\,.
$$
Since the distance between any two distinct vertices is at least one,
among all graphs on $n$ vertices $K_n$ has the smallest Wiener index.
So for any connected graph $G$ on $n$ vertices, it holds
$$
{n \choose 2} = W(K_n) \le W(G) \le W(P_n)={n+1 \choose 3}\,.
$$
Note that the alternative proof of Theorem~\ref{thmDG} gives us a new proof that
$W(P_n)={n+1 \choose 3}$ and that $P_n$ is the extremal graph for the maximum.
Among 2-connected graphs on $n$ vertices (or even stronger, among the graphs
of minimum degree 2), the $n$-cycle has the largest Wiener index
$$
W(C_n) =
\begin{cases}
\frac{n^3}{8} & \mbox{if $n$ is even}, \\
\frac{n^3-n}{8} &  \mbox{if $n$ is odd}. \\
\end{cases}
$$


\section{The inverse Wiener index problem}

In 1995 Gutman and Yeh~\cite{GY} considered an inverse Wiener index problem.
They asked for which integers $w$ there exist trees with Wiener index $w$,
and posed the following conjecture:

\begin{equation}
\label{conj1}
  \mbox{\it For all but finitely many integers } w \mbox{ \it there exist trees with
  Wiener index } w.
\end{equation}

Inspired by the conjecture above, Lepovi\'c and Gutman~\cite{LG}
checked integers up to $1206$ and found $49$ integers that are not
Wiener indices of trees.
In 2004, Ban, Bereg, and Mustafa~\cite{BBM} computationally proved
that for all integers $w$ on the interval from $10^3$ to $10^8$
there exists a tree with Wiener index $w$.
Finally, in 2006, two proofs of the conjecture were published.
First, Wang and Yu~\cite{WY} proved that for every $w > 10^8$
there exists a caterpillar tree with Wiener index $w$.
The second result is due to Wagner~\cite{W}, who proved that
all integers but $49$ are Wiener indices of trees with diameter
at most $4$.

Surprisingly, it turns out that in most cases the inverse problem has many solutions.
Fink, Lu{\v z}ar and {\v S}krekovski~\cite{FLS} showed that the following theorem holds.
\begin{theorem}
\label{thm-many}
  There exists a function $f(w) \in \Omega(\sqrt[4]{w})$ such
  that for every sufficiently large integer $w$ there exist
  at least $2^{f(w)}$ trees with Wiener index $w$.
\end{theorem}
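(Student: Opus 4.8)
The plan is to exhibit, for each large $w$, a large family of trees of a very restricted shape all sharing the same Wiener index, and to count that family. The natural candidates are caterpillars or, even simpler, trees of diameter at most~$4$: a central vertex $c$ to which we attach several "brooms," i.e.\ paths of length $1$ or $2$. Using the edge-decomposition of Theorem~\ref{W.th} (equivalently~(\ref{W.eq2})), the Wiener index of such a tree is a low-degree polynomial in the multiset of branch sizes. If we fix the number of vertices $n$ and vary only how the pendant structure is distributed, $W$ becomes a quadratic form in the branch parameters, and the goal is reduced to a purely additive-combinatorics statement: a fixed target value of this quadratic form is attained by exponentially many admissible parameter vectors.

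Concretely, first I would pick a convenient family: let $T$ consist of a path (the "spine") together with pendant edges, so that $T$ is determined by a $0/1$ (or small-integer) vector recording, at each spine vertex, how many leaves hang off it. By~(\ref{W.eq2}), $W(T)=\sum_{e=ij} n_e(i)n_e(j)$, and each term is an explicit quadratic expression in the partial sums of that vector. Expanding, $W(T)=Q(\mathbf{x})$ where $Q$ is a quadratic polynomial with bounded coefficients in roughly $m\approx\sqrt{w}$ variables (the spine length is about $\sqrt{w}$ since $W(P_n)=\binom{n+1}{3}\sim n^3/6$, so to reach $W\approx w$ with a caterpillar of bounded "width" we need $n\sim w^{1/3}$; to get the $w^{1/4}$ exponent one chooses a two-scale construction — a short spine of length $\Theta(w^{1/4})$ carrying $\Theta(w^{1/4})$-sized pendant paths — so that the number of free binary parameters is $\Theta(w^{1/4})$). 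Second, I would show that as $\mathbf{x}$ ranges over $\{0,1\}^m$ (with $m=\Theta(w^{1/4})$), the value $Q(\mathbf{x})$ stays in an interval of length $O(w)$, while there are $2^m$ choices; a pigeonhole on the $O(w)$ possible values would already give a value hit $2^m/O(w) = 2^{m-O(\log w)} = 2^{\Omega(w^{1/4})}$ times. Third, a small adjustment gadget (e.g.\ lengthening one distinguished pendant path by a controllable amount, which shifts $W$ by a tunable linear term) lets us move from "some value is hit exponentially often" to "the *prescribed* value $w$ is hit exponentially often," by absorbing the residue.

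The main obstacle is the last step: pigeonhole gives exponentially many trees with a *common* Wiener index, but not necessarily with Wiener index exactly the target $w$ we were handed. Overcoming this requires the construction to be flexible enough that, after fixing the $2^{\Omega(w^{1/4})}$ "free" parameters, there remains a separate tunable part of the tree whose contribution to $W$ sweeps out a full interval of integers (or at least an arithmetic progression with small common difference) of length comparable to the spread of $Q$ over the free parameters — so that for every $w$ in the relevant range one can first fix the tunable part to land in the right residue class and coarse position, then invoke pigeonhole on the remaining exponentially many configurations. Making these two mechanisms coexist — keeping $\Theta(w^{1/4})$ genuinely free binary choices while reserving enough "bandwidth" for exact calibration, all within $n=\Theta(w^{1/3})$ vertices — is the delicate part; everything else is bookkeeping with the quadratic form coming from~(\ref{W.eq2}).
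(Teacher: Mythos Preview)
The paper you are working from is a survey and does \emph{not} prove Theorem~\ref{thm-many}; it merely quotes the statement and attributes it to Fink, Lu\v{z}ar and \v{S}krekovski~\cite{FLS}, adding only the remark that~\cite{FLS} also gives a constant-time algorithm returning a tree of diameter~$4$ with prescribed Wiener index. So there is nothing in the present paper to compare your argument against beyond that hint about diameter-$4$ trees, which is indeed the family you reach for.

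As for the proposal itself: the overall architecture (a two-scale caterpillar/broom with $\Theta(w^{1/4})$ binary ``free'' switches plus an independent ``tuning'' branch) is sound, and your scaling heuristics are consistent with the claimed exponent. The genuine gap is in your final paragraph, where the logic of combining pigeonhole with the adjustment gadget is muddled. You write that one should ``first fix the tunable part \dots\ then invoke pigeonhole on the remaining exponentially many configurations.'' But pigeonhole on $2^m$ values spread over an interval of length $O(w)$ only guarantees that \emph{some} value is hit $2^{m}/O(w)$ times, not that the specific target $w$ is; fixing the tuner first does not change this. The mechanism that actually works is the reverse order: for \emph{each} of the $2^m$ settings of the free switches, the raw Wiener index lands in a bounded window, and the tuning branch then shifts it (by an explicit linear amount) onto $w$ exactly. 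That yields $2^m$ trees with $W=w$ directly, and no pigeonhole is needed at all --- provided you verify two things you have not addressed: (i) the tuning range is at least as wide as the spread of the raw values over all $2^m$ switch settings, and (ii) distinct switch settings, after their respective tunings, still give pairwise non-isomorphic trees. Both are routine once the construction is written down explicitly, but as stated your proposal does not close either point.
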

In~\cite{FLS} there is also proposed a constant time algorithm,
which for a given integer $w$ returns a tree with diameter four
and with Wiener index $w$. It would be interesting to find a better lower bound on $f(w)$ in
Theorem~{\ref{thm-many}}.

However, beside caterpillars and trees with small diameter, it could be interesting to
find some other types of trees (or graphs) that solve the inverse Wiener index problem.
Li and Wang~\cite{Li} considered this problem for peptoids,
Wagner et. al \cite{W3} for molecular and so-called hexagon type graphs, and Wagner \cite{W2}
for graphs with small cyclomatic number.

Bereg and Wang~\cite{BW} experimentally came to the observation
that this may hold for binary trees, as stated bellow.
Moreover, they observed that the conjecture may hold even when restricting
to $2$-trees, and even more, they where not able to disprove it for $1$-trees
(a binary tree of height $h$ is a {\em $k$-tree} if every vertex of depth
less than $h-k$ has precisely two children).

\begin{conjecture}
    Except for some finite set, every positive integer is the Wiener index of a binary tree.
\end{conjecture}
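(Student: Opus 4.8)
\medskip\noindent\textbf{Proof proposal.} The plan is to imitate, under the maximum-degree-three restriction, the caterpillar constructions of Wang and Yu~\cite{WY} and of Fink, Lu\v{z}ar and \v{S}krekovski~\cite{FLS}: exhibit a two-parameter family of binary trees, compute its Wiener index exactly via the edge-contribution formula of Theorem~\ref{W.th}, and show that the set of values attained is eventually cofinite. One parameter controls the order of magnitude of $W$, while a small supply of ``tuning'' gadgets moves $W$ in small, well-understood increments; the whole argument then reduces to a subset-sum (Frobenius-type) statement about which integers in a long window are reachable, together with a finite computer check for the small values, which — as is already the case for trees — must genuinely be excluded.

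Concretely, I would work with \emph{binary caterpillars}: take a spine $v_1v_2\cdots v_k$ and, independently for each internal position $i$, either attach a pendant leaf at $v_i$ or not, while attaching fixed small binary subtrees at $v_1$ and $v_k$ to absorb the endpoints; every vertex then has degree at most $3$, so, rooted at $v_1$, the object is a binary tree. If $S\subseteq\{2,\dots,k-1\}$ is the set of positions carrying a pendant leaf and $n=k+|S|+O(1)$, then cutting a spine edge $v_iv_{i+1}$ splits off a known number of vertices on each side, so Theorem~\ref{W.th} gives $W = Q(k) + \sum_{i\in S} w_i(k,|S|) + (\text{endpoint corrections})$, where $Q(k)$ is an explicit cubic and the ``position weight'' $w_i$ is, up to lower-order terms, of the form $i\,(n-i)$. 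The key observation is that consecutive weights differ by $w_{i+1}-w_i \approx n-2i-1$, which runs through all small integers as $i$ crosses the middle of the spine; hence moving a pendant leaf between neighbouring central positions changes $W$ by $\pm1,\pm2,\dots$, and by additionally varying $|S|$, $k$, and the endpoint gadgets one should be able to hit every residue class and then every sufficiently large integer.

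The main obstacle is making that last sentence precise. One must show: (i) for consecutive values of the coarse parameter the attainable windows of $W$ overlap once the span of fine adjustments is taken into account (here the adjustment window has size of order $k^3$ while the coarse jump is only of order $k^2$, which is encouraging but needs care because $n$ itself moves as $S$ grows); (ii) the set of subset sums $\{\sum_{i\in S} w_i : S\subseteq\{2,\dots,k-1\}\}$, for fixed large $k$, actually contains a long interval of consecutive integers — this is where the quadratic, non-arithmetic, multiplicity-laden structure of the weights $w_i$ must be exploited, and it is the genuinely delicate step, since the usual device of dumping many leaves onto a single high-degree vertex is unavailable; and (iii) every step respects the constraint that all degrees are at most $3$, endpoints included. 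If (i)--(iii) go through, the achievable set is cofinite and the remaining exceptions are pinned down computationally, exactly as in~\cite{WY,W,FLS}. I expect (ii) — obtaining dense, fully controllable granularity under bounded degree — to be the crux; a natural fallback is to bring in a second complementary family (e.g.\ ``binary brooms'' with a tunable number of short branches) whose increments are coprime to those of the first, and to finish with a Frobenius-type covering argument.
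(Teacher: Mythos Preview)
The statement you are attempting to prove is listed in the paper as a \emph{conjecture}, not a theorem: the authors attribute it to Bereg and Wang~\cite{BW}, describe it as an observation based on computational experiments, and offer no proof whatsoever. There is therefore nothing in the paper to compare your proposal against --- the problem is, as far as the paper is concerned, open.

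As for the proposal itself: it is a reasonable outline of how one might try to adapt the Wang--Yu/Wagner strategy to the bounded-degree setting, and you are candid about where the genuine difficulty lies. Step~(ii) is indeed the crux, and your sketch does not resolve it. The weights $w_i(k,|S|)$ depend on $|S|$ through $n$, so the subset-sum problem is not over a fixed weight sequence but a moving one; moreover, the weights $i(n-i)$ are far from distinct (each value near $n^2/4$ is hit twice, and many coincidences occur), so standard subset-sum interval lemmas do not apply directly. Your fallback of introducing a second family and finishing with a Frobenius argument is the natural next idea, but coprimality of the increments is not automatic and would itself need to be engineered. In short, the proposal is a plausible plan of attack on an open problem, not a proof, and the paper does not claim otherwise.
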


In~\cite{KrncS}  was considered the following problem, so called the {\em Wiener inverse interval problem}.

\begin{problem}
\label{prob:inv_gen}
For given $n$, find all values $w$ which are Wiener indices of graphs
(trees) on $n$ vertices.
\end{problem}
Regarding the above problem, let ${\rm WG}(n)$ and ${\rm WT}(n)$ be the
corresponding sets of values $w$ for graphs and trees on $n$ vertices, respectively.
Both sets have $\binom{n+1}{3}$ for the maximum element.
The smallest value in ${\rm WG}(n)$ is $\binom{n}{2}$ and in ${\rm WT}(n)$ it is $(n-1)^2$. In ~\cite{KrncS}, the size of the set ${\rm WG}(n)$ was considered, and it was shown that it is of order $\frac{1}{6}n^{3}+O\left(n^{2}\right)$. In the same paper were
stated the following problems.

\begin{conjecture}
The cardinality of ${\rm WG}(n)$ is of order $\frac{1}{6}n^{3}-\frac{1}{2}n^{2}+\Theta(n)$.
\end{conjecture}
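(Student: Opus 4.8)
Here is a possible line of attack. The starting point is the trivial containment: since $\binom{n}{2}=W(K_n)\le W(G)\le W(P_n)=\binom{n+1}{3}$ for every connected $n$-vertex graph $G$, the set ${\rm WG}(n)$ lies in the integer interval $I_n=\bigl[\binom{n}{2},\binom{n+1}{3}\bigr]$, which has
\[
 |I_n|=\binom{n+1}{3}-\binom{n}{2}+1=\binom{n}{3}+1=\tfrac{1}{6}n^{3}-\tfrac{1}{2}n^{2}+\tfrac{1}{3}n+1
\]
elements. Hence $|{\rm WG}(n)|\le\tfrac{1}{6}n^{3}-\tfrac{1}{2}n^{2}+O(n)$ automatically, and the conjecture is equivalent to the assertion that the number of \emph{missing} values --- integers of $I_n$ that are not the Wiener index of any $n$-vertex graph --- is $\Theta(n)$: it must grow, but only linearly. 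So the task splits into an $\Omega(n)$ lower bound and an $O(n)$ upper bound on the number of missing values.

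For the $\Omega(n)$ lower bound the plan is to prove a stability form of the fact that $P_n$ is the unique maximiser: there is a constant $c>0$ such that every $n$-vertex graph $G$ with $W(G)>\binom{n+1}{3}-cn$ is $P_n$ itself or arises from a path by a bounded number of elementary moves near an end (relocating an end vertex to a nearby vertex, attaching a short chord, and the like). A finite case analysis of these moves then shows that the achievable values just below $\binom{n+1}{3}$ are sparse --- in particular the largest Wiener index of an $n$-vertex graph other than $P_n$ is $\binom{n+1}{3}-\Theta(n)$, so a whole block of $\Theta(n)$ consecutive integers immediately below $\binom{n+1}{3}$ is missing.

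For the $O(n)$ upper bound, recall from \cite{KrncS} that all but $O(n^{2})$ integers of $I_n$ are already realised, so what is needed is to reduce $O(n^{2})$ to $O(n)$; since the Wiener index assumes very few of the largest values, those $O(n^{2})$ exceptions should all lie near the top of $I_n$. The bottom and a large part of the middle are in any case easy to cover directly and without gaps: for instance the graphs obtained from an arbitrary graph on $n-2$ vertices by adding two universal vertices have diameter $2$, hence Wiener index $n(n-1)-e$ with $e$ the number of edges, so inserting edges one at a time sweeps $\bigl[\binom{n}{2},\,n^{2}-3n+3\bigr]$ without gaps, and clique-with-pendant-path families treated similarly handle the middle. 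The real work is in the regime $W\in\bigl[\binom{n+1}{3}-\Theta(n^{2}),\,\binom{n+1}{3}-cn\bigr]$, where every graph in question is forced to be essentially path-like: there one needs a construction on two scales --- a chord whose far endpoint slides along the path (steps of order $n$ over a range of order $n^{2}$) refined by a second short chord or by a vertex relocated to an interior position near the middle of the path (steps of order $1$ over a range of order $n$) --- arranged so that the resulting family leaves at most $O(n)$ integers of this regime unhit.

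The main obstacle is exactly this near-extremal regime: it is where the construction of \cite{KrncS} loses $\Theta(n^{2})$ values, and it is also what drives the $\Omega(n)$ lower bound. Everything hinges on a sufficiently precise structure theorem for $n$-vertex graphs whose Wiener index is within $\Theta(n^{2})$ of the maximum --- strong enough that such graphs are path-like enough to be enumerated (which forces the $\Theta(n)$-block of truly missing values at the very top) yet with an enumeration rich enough to leave only $O(n)$ gaps below that block. I expect the bottom and middle ranges to be routine once the families are written down, whereas proving this refined stability statement for the Wiener-index maximiser is the crux.
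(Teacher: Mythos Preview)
The statement you are addressing is listed in the paper as an open \emph{conjecture} (it comes from \cite{KrncS}); the paper contains no proof of it, so there is nothing to compare your attempt against. What the paper does record is the weaker fact from \cite{KrncS} that $|{\rm WG}(n)|=\tfrac{1}{6}n^{3}+O(n^{2})$, i.e.\ that at most $O(n^{2})$ values in $I_n$ are missed; the conjecture asks to sharpen $O(n^{2})$ to $\Theta(n)$, and this remains open.

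Your write-up is honest about being a plan rather than a proof, and the overall decomposition is sensible: reducing the question to ``number of missing values in $I_n$ is $\Theta(n)$'', then splitting into an $\Omega(n)$ and an $O(n)$ estimate. The $\Omega(n)$ part is in fact easy and can be made rigorous immediately: the second-largest Wiener index of an $n$-vertex connected graph is $\binom{n+1}{3}-(n-3)$, attained by the tree obtained from $P_{n-1}$ by hanging a leaf on its penultimate vertex (adding any edge to $P_n$ drops the index by at least $n-2$), so the whole block $\bigl(\binom{n+1}{3}-(n-3),\,\binom{n+1}{3}\bigr)$ is missing. No stability theorem is needed here.

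The genuine gap is the $O(n)$ direction. You correctly identify that the difficulty is concentrated in the near-maximal regime $\binom{n+1}{3}-\Theta(n^{2})\le W\le\binom{n+1}{3}-\Theta(n)$, and you describe the \emph{shape} of a two-scale construction, but you do not carry it out, and this is exactly the step that nobody has completed. The issue is not merely bookkeeping: sliding a chord along $P_n$ changes $W$ by amounts that are themselves nonconstant in the position (roughly linear in the distance from the end), so the coarse steps are unevenly spaced, and one has to verify that the fine perturbation always has enough range to bridge each coarse gap while staying within a single family whose $W$-values you can control. Until that interlocking of scales is written down and checked, what you have is a plausible programme for attacking an open problem, not a proof.
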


\begin{conjecture}
The cardinality of ${\rm WT}(n)$ equals $\frac{1}{6}n^{3}+\Theta\left(n^{2}\right)$.
\end{conjecture}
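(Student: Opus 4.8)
\medskip\noindent\textbf{Proof proposal.}
The upper bound is immediate from the extremal values recalled above: every $n$-vertex tree has Wiener index in $[(n-1)^2,\binom{n+1}{3}]$, so
$$
|{\rm WT}(n)|\ \le\ \binom{n+1}{3}-(n-1)^2+1\ =\ \tfrac16 n^3-n^2+O(n),
$$
so that $\tfrac16 n^3-|{\rm WT}(n)|\ge n^2-O(n)$. One should also record a parity constraint: a tree is bipartite, and if its parts have sizes $a$ and $n-a$ then exactly $a(n-a)$ pairs of vertices lie at odd distance, so $W(T)\equiv a(n-a)\pmod 2$. For odd $n$ this is always even, forcing $|{\rm WT}(n)|\le\tfrac1{12}n^3+O(n^2)$; thus the statement as written requires $n$ even, which I assume below (the odd case is the same once one restricts throughout to even $w$).

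What remains is the matching lower bound $|{\rm WT}(n)|\ge\tfrac16 n^3-O(n^2)$: all but $O(n^2)$ integers of $[(n-1)^2,\binom{n+1}{3}]$ occur as Wiener indices of $n$-vertex trees. The plan is to realise them with a single flexible family, the caterpillars. For $C\sqrt n\le m\le n-C\sqrt n$ (with $C$ an absolute constant) put $\ell=n-1-m$ and consider the caterpillars obtained from the spine $p_0p_1\cdots p_\ell$ by attaching $c_i\ge 0$ pendant leaves at $p_i$ with $\sum_i c_i=m$. A routine computation expresses $W$ as $\binom{\ell+2}{3}$ plus explicit linear and quadratic terms in the $c_i$'s; its minimum over the family is attained by bunching all leaves at the central spine vertex, its maximum by the balanced double broom, and the realised values fill an interval $I_m$ of length $\Theta(m\ell n)$. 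Fine tuning comes from the move ``slide one leaf from $p_j$ to $p_{j\pm1}$'', which changes $W$ by $L-R$, where $L+R=n-1$ and $L$ counts the vertices to one side of the traversed spine edge; keeping a leaf next to a nearly balanced spine edge makes $|L-R|=1$, and a careful sliding schedule then realises every integer of the admissible parity in a subinterval $J_m\subseteq I_m$ obtained by deleting from $I_m$ two end-tails of length $O(n^2)$.

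Next I would interlock the $J_m$'s. Passing from $m$ to $m+1$ shifts the endpoints of $I_m$ by only $\Theta(\ell^2)$, which — like the deleted tails — has lower order than $|I_m|=\Theta(m\ell n)$ throughout the range $m\ge C\sqrt n$; so consecutive $J_m,J_{m+1}$ overlap with room to spare and $\bigcup_{C\sqrt n\le m\le n-C\sqrt n}J_m$ is a single interval. Its top is $W_{\max}(C\sqrt n)=\binom{n+1}{3}-\Theta(n^2)$ and its bottom is $\Theta(n^2)$, so ${\rm WT}(n)$ contains every integer of admissible parity in $[(n-1)^2+O(n^2),\,\binom{n+1}{3}-O(n^2)]$, and the $O(n^2)$ integers in the two leftover stretches are harmless. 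With the upper bound this gives $|{\rm WT}(n)|=\tfrac16 n^3-\Theta(n^2)$, the conjectured estimate.

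I expect the interlocking of the last two paragraphs to be where the real work lies, and the difficulty to be genuine rather than formal: one must check that along a sliding schedule no forced step ever exceeds the available small increments — which can fail exactly when all leaves have been pushed to the ends (only moves of size $\Theta(n-m)$ remaining) or all to the centre — and then recover those stubborn values from the dense middle of a neighbouring $m$, and this requires the tail lengths and the overlaps with their exact constants, not merely their orders of magnitude. It is the region near the path $P_n$ (small $m$, where tree Wiener indices are genuinely spaced $\Theta(n)$ apart over an interval of length $\Theta(n^2)$) that obstructs the interlocking there and simultaneously prevents any improvement of the error term below $\Theta(n^2)$. A shortcut worth trying first is to adapt to trees whatever argument establishes the analogous estimate for $|{\rm WG}(n)|$ in \cite{KrncS}, where the extra freedom of adding and deleting edges should lighten the bookkeeping considerably.
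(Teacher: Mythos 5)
Note first that in the paper this statement is not a theorem but one of the open conjectures quoted from \cite{KrncS}; there is no proof in the paper to compare yours against, so your argument must stand on its own, and at present it is a programme rather than a proof. Your parity observation is correct and deserves more prominence than you give it: in a tree with parts of sizes $a$ and $n-a$ exactly $a(n-a)$ pairs lie at odd distance, so $W(T)\equiv a(n-a)\pmod 2$, and for odd $n$ this product is always even; hence for odd $n$ the set ${\rm WT}(n)$ contains only even numbers and has cardinality at most $\frac{1}{12}n^3+O(n^2)$. This does not merely ``require $n$ even'' --- it shows that the conjecture as literally stated cannot hold for odd $n$ and must be reformulated (for even $n$, or counting values within the attainable parity class), so what you outline is a proof of an amended statement and should be presented as such.

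The genuine gap lies in the lower bound $|{\rm WT}(n)|\ge \frac16 n^3-O(n^2)$, which is the entire content of the conjecture, and there every key assertion is asserted rather than proved: that the extremal caterpillars for fixed spine length $\ell$ and leaf number $m$ are the central bunching and the balanced double broom; that the realized values omit no integer of the attainable parity outside two tails of length $O(n^2)$; and that consecutive trimmed intervals $J_m$, $J_{m+1}$ overlap. The sliding move changes $W$ by $L-R$, but the configurations near the ends of $I_m$ are exactly those in which only large increments are available, so the existence of a gap-free sliding schedule is the heart of the matter, not a routine verification --- as you yourself acknowledge. Even the quantitative claims made along the way are shaky: a back-of-envelope computation gives the shift of the top endpoint of $I_m$ under $m\mapsto m+1$ as order $mn$ (up to $n^2$), not $\Theta(\ell^2)$; this does not sink the plan, since $|I_m|=\Theta(m\ell n)=\Omega(n^{5/2})$ throughout your range, but it confirms that the bookkeeping with explicit constants --- which, as you note, is what decides whether the trimmed intervals really interlock --- has not been done. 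Finally, the proposed shortcut of adapting the ${\rm WG}(n)$ argument of \cite{KrncS} is not obviously available: there one may add edges on a fixed vertex set, a freedom with no analogue for trees, where the number of edges is fixed and the parity constraint above is in force. In short: a sensible plan and a valuable parity remark, but the conjecture is not proved by this proposal.
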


In fact in~\cite{KrncS} it was shown that the length of the largest interval of integers which is fully
contained in ${\rm WG}(n)$ is of size $\frac{1}{6}n^{3}+O\left(n^{2}\right)$.
Regarding the length of the largest interval when only trees are considered, the following is conjectured.

\begin{conjecture}
In the set ${\rm WT}(n)$, the cardinality of the largest
interval of integers equals $\Theta\left(n^{3}\right)$.
\end{conjecture}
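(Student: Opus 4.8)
\medskip
\noindent\textbf{Sketch of a possible proof.}
The upper bound is immediate: ${\rm WT}(n)\subseteq[(n-1)^2,\binom{n+1}{3}]$, so $|{\rm WT}(n)|=O(n^3)$ and hence every interval of integers contained in ${\rm WT}(n)$ has $O(n^3)$ elements. For the lower bound the plan is to realise a long interval using only caterpillars. One encodes a caterpillar on $n$ vertices with spine $s_1\cdots s_m$ carrying $c_i\ge 0$ pendant leaves at $s_i$ by the strictly increasing sequence $x_i=i+\sum_{j\le i}c_j$; as the caterpillar varies, $S:=\{x_1,\dots,x_{m-1}\}$ runs over \emph{all} subsets of $\{1,\dots,n-1\}$. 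Evaluating $W$ through the edge version $W(T)=\sum_{e=ij}n_e(i)n_e(j)$ --- the $n-m$ pendant edges contribute $(n-m)(n-1)$, and the spine edge $s_is_{i+1}$ contributes $x_i(n-x_i)$ --- and simplifying yields the identity
\begin{equation}
\label{cat-W}
W(T_S)=(n-1)^2+\sum_{x\in S}h(x),\qquad h(x)=(x-1)(n-1-x),
\end{equation}
valid for every $S\subseteq\{1,\dots,n-1\}$; the endpoints $x\in\{1,n-1\}$ give $h=0$ and are irrelevant. So it suffices to prove that the set of subset sums $\bigl\{\sum_{x\in S}h(x):S\subseteq\{2,\dots,n-2\}\bigr\}$ contains an interval of length $\Theta(n^3)$.

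The useful structure is that $h$ is symmetric, $h(x)=h(n-x)$, with maximum $c_0:=h(\lfloor n/2\rfloor)=\Theta(n^2)$, and that $h$ is flat near the centre: for $n$ even, $h(\tfrac n2-j)=c_0-j^2$. Accordingly I would split the index set into a ``fine'' block $\mathcal F=\{\tfrac n2-t,\dots,\tfrac n2-1\}\cup\{\tfrac n2+1,\dots,\tfrac n2+t\}$ with $t=\lfloor n/8\rfloor$ and a disjoint ``coarse'' block $\mathcal C=\{2,\dots,\lfloor n/128\rfloor\}\cup\{n-\lfloor n/128\rfloor,\dots,n-2\}$, and restrict to $S=C\sqcup D$ with $C\subseteq\mathcal C$ and $D\subseteq\mathcal F$, $|D|=5$. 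Writing $D$ as five of the indices $\tfrac n2\pm j$ (each value of $j$ used at most twice, since it occurs both at $\tfrac n2-j$ and at $\tfrac n2+j$), the $\mathcal F$-part of (\ref{cat-W}) equals $5c_0-(j_1^2+\dots+j_5^2)$. The first step is then a number-theoretic lemma: there is an absolute constant $N_0$ such that, for all large $n$, every integer in $[N_0,t^2]$ is a sum $j_1^2+\dots+j_5^2$ with $j_i\in\{1,\dots,t\}$ and no value used more than twice. Granting this, for each fixed $C$ the numbers $W(T_{C\sqcup D})$ sweep out the full integer interval
\[
\Bigl[\,(n-1)^2+\textstyle\sum_{x\in C}h(x)+5c_0-t^2,\ \ (n-1)^2+\sum_{x\in C}h(x)+5c_0-N_0\,\Bigr],
\]
of length $t^2-N_0=\Theta(n^2)$.

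The second step assembles the coarse scale and is routine. Enumerate $\mathcal C$ as $k_1,\dots,k_M$ in non-decreasing order of $h$-value and set $C_i=\{k_1,\dots,k_i\}$; passing from $C_i$ to $C_{i+1}$ translates the fine interval up by $h(k_{i+1})$. Since $h(k)\le h(\lfloor n/128\rfloor)<n^2/128<t^2-N_0$ for every $k\in\mathcal C$ and all large $n$, consecutive fine intervals overlap, so their union over $i=0,1,\dots,M$ is a single interval of length $\sum_{x\in\mathcal C}h(x)+t^2-N_0$. Because $\sum_{x\in\mathcal C}h(x)=2\sum_{k=2}^{\lfloor n/128\rfloor}(k-1)(n-1-k)=\Theta(n^3)$, this interval has length $\Theta(n^3)$, which together with the upper bound settles the statement for $n$ even. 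For $n$ odd every tree has \emph{even} Wiener index (each edge has $n_e(i)+n_e(j)=n$ odd, so $n_e(i)n_e(j)$ is even), and ``interval of integers'' must then be read as ``interval of consecutive even integers''; the same construction goes through since near the centre one has $h\bigl(\tfrac{n-1}{2}-j\bigr)=h\bigl(\tfrac{n-1}{2}\bigr)-j(j+1)$, so the fine correction becomes a sum of five doubled triangular numbers, which (by Gauss's three-triangular-number theorem and the same multiplicity bookkeeping) covers all even integers in a window of length $\Theta(n^2)$.

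The crux, and the step I expect to be the main obstacle, is the number-theoretic lemma of the fine step. Four squares would be cleaner but actually fails --- $64$ is only $8^2$ or $4\cdot 4^2$, hence is missed --- which is why one works with five squares, where there is no local obstruction: $r_5(N)\gg N^{3/2}$ dominates the $O(N^{1+\varepsilon})$ representations that are degenerate for this purpose (a zero entry, an entry exceeding $t$ --- impossible once $N\le t^2$ --- or an entry of multiplicity at least $3$), so a good representation exists for all $N$ beyond an absolute constant, the finitely many exceptions being absorbed into $N_0$. Everything else --- disjointness of $\mathcal F$ and $\mathcal C$, the overlap inequality, the choice of the constants $\tfrac18$ and $\tfrac1{128}$, and the odd-$n$ parity discussion --- is elementary calculation built on (\ref{cat-W}).
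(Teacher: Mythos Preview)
The paper states this as an open \emph{conjecture} and offers no proof; there is nothing in the paper to compare your argument against. What you have written is not a reconstruction of the authors' reasoning but an original attack on an open problem.

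That said, your plan is sound and, as far as I can check, essentially correct. The caterpillar identity $W(T_S)=(n-1)^2+\sum_{x\in S}(x-1)(n-1-x)$ follows directly from the edge formula $W(T)=\sum_{e=ij}n_e(i)n_e(j)$ (pendant edges contribute $(n-m)(n-1)$, the spine edge $s_is_{i+1}$ contributes $x_i(n-x_i)$, and $(x-1)(n-1-x)=x(n-x)-(n-1)$). The two-scale construction --- a fine block near $n/2$ producing an interval of length $t^2-N_0=\Theta(n^2)$, glued along prefixes of a coarse block near the ends whose individual $h$-values stay below $t^2-N_0$ --- is exactly the right mechanism, and the overlap inequality together with $\sum_{k\le n/128}(k-1)(n-1-k)=\Theta(n^3)$ is routine.

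The only substantive step is the five-squares lemma, and your outline for it is the standard one: $r_5(N)\asymp N^{3/2}$ is classical (circle method, or modular forms of weight $5/2$), representations with a zero entry contribute at most $5r_4(N)=O(N^{1+\varepsilon})$, and those with a value of multiplicity $\ge 3$ contribute $O(N^{1/2+\varepsilon})$ via $r_2$; the constraint $N\le t^2$ forces $j_i\le t$ automatically. Your remark that four squares fail at $N=64$ (only $8^2$ or $4\cdot 4^2$) is the correct reason for taking five.

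Your parity observation for odd $n$ deserves emphasis: since each edge of a tree on an odd number of vertices has $n_e(i)n_e(j)$ even, ${\rm WT}(n)\subset 2\mathbb{Z}$, and the conjecture \emph{as literally stated} is false (or has answer $1$) for odd $n$ unless ``interval of integers'' is read as ``run of consecutive even integers''. The paper does not note this. Your adaptation via five terms $j(j+1)$ --- equivalently, five odd squares summing to $4N+5\equiv 5\pmod 8$ --- goes through by the same asymptotics, so the corrected statement is handled as well.
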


%
%
%

\section{Graphs with prescribed minimum/maximum degree}

Here we consider extremal values of Wiener index in some subclasses of the
class of all graphs on $n$ vertices. Recall that the {\em maximum degree} of a graph $G$, denoted by $\Delta(G)$, and the {\em minimum degree} of a graph, denoted by $\delta(G)$, are the maximum and minimum degree of its vertices.
As mentioned above, among $n$-vertex graphs with the minimum degree $\ge 1$,
the maximum Wiener index is attained by $P_n$.
But when restricting to minimum degree $\ge 2$, the extremal graph is $C_n$.
Observe that with the reasonable assumptions $\Delta\ge 2$ and $\delta\le n-1$,
the following holds
\begin{eqnarray}
&&\max\{W(G);\,
G\mbox{ has maximum degree at most }\Delta \mbox{ and }n\mbox{ vertices}\}
=W(P_n),\mbox{\qquad and}\nonumber\\
&&\min\{W(G);\, G\mbox{ has minimum degree at least }\delta \mbox{ and }n\mbox{ vertices}\}
=W(K_n).\nonumber
\end{eqnarray}

Analogous reasons motivate the following two problems.

\begin{problem}
\label{prob:up-bound}
What is the maximum Wiener index among $n$-vertex graphs with the minimum degree
at least $\delta$?
\end{problem}

\begin{problem}
\label{prob:lo-bound}
What is the minimum Wiener index among $n$-vertex graphs with the maximum degree
at most $\Delta$?
\end{problem}

A related problem was considered by Fischermann et al.~\cite{fish},
and independently by Jelen and Trisch in~\cite{jel,tri}, who
characterized the trees which minimize the Wiener index among all
$n$-vertex trees with the maximum degree at most $\Delta$. They also
determined the trees which maximize the Wiener index, but in a much
more restricted family of trees which have two distinct vertex
degrees only. Later Stevanovi\'{c}~\cite{stev} determined the trees
which maximize the Wiener index among all graphs with the maximum
degree $\Delta$, and originally Problem  \ref{prob:lo-bound} was
proposed by him in an equivalent form which requires that the
maximum degree is precisely $\Delta$.

Restricting to $\Delta=\delta=r$, i.e., restricting to regular graphs, could be
especially interesting.
In general, introducing (resp. removing) edges in a graph decreases
(resp. increases) the Wiener index, but in the class of $r$-regular graphs
on $n$ vertices we have fixed number of $r\cdot n/2$ edges.
Thus, more important role is played by the diameter.
Recall that in the case of trees, where the number of edges is
fixed as well, maximum Wiener index is attained by $P_n$ which has the largest
diameter, and minimum Wiener index is attained by $S_n$, which has
the smallest diameter.
Let us start with the first nontrivial case $r=3$, i.e. with cubic graphs.

\begin{figure}[h]
\begin{center}
\epsfig{file=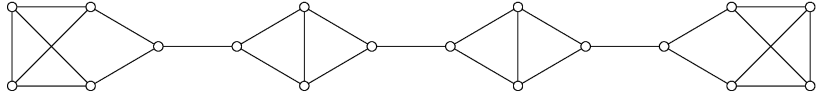, height=14mm}
\caption{The graph $L_{18}$.}
\label{fig:L18}
\end{center}
\end{figure}

Let $n$ be even and $n\ge 10$.
If $4\nmid n$, then $L_n$ is obtained from $(n-10)/4$ copies of $K_4-e$
joined to a path by edges connecting the vertices of degree $2$,
to which at the ends we attach two pendant blocks, each on $5$ vertices,
see Figure~{\ref{fig:L18}} for $L_{18}$.

\begin{figure}[h]
\begin{center}
\epsfig{file=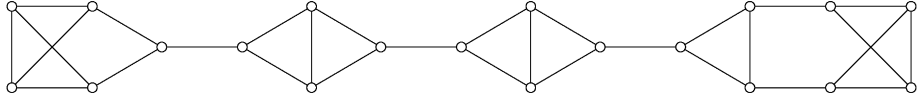, height=14mm}
\caption{The graph $L_{20}$.}
\label{fig:L20}
\end{center}
\end{figure}

On the other hand if $4\mid n$, then $L_n$ is obtained from $(n-12)/4$
copies of $K_4-e$, joined into a path by edges connecting the
vertices of degree $2$, to which ends we attach two pendant blocks, one on $5$
vertices and the other on $7$ vertices, see Figure~{\ref{fig:L20}} for
$L_{20}$ \cite{BalabanWe}. We have the following conjecture.

\begin{conjecture}
\label{conj:3-bound}
Among $n$-vertex cubic graphs, $L_n$ has the largest Wiener index.
\end{conjecture}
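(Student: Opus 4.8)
\medskip
\noindent We outline a possible route to Conjecture~\ref{conj:3-bound}.

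The plan is to exploit the block decomposition of the Wiener index. If $e=uv$ is a bridge of a connected graph $G$, then $e$ lies on the geodesic between any two vertices on opposite sides of $e$ exactly once, so its total contribution to $W(G)$ equals $n_e(u)\,n_e(v)$; and if $u,v$ lie in a common block $B$, then every geodesic joining them stays inside $B$, so $d_G(u,v)=d_B(u,v)$. These two facts let one write $W(G)=\sum_i \widetilde W(B_i)$, the sum over the blocks $B_i$ of $G$ (bridges included as $K_2$-blocks), where $\widetilde W(B_i)$ is a vertex-weighted Wiener index of $B_i$ in which the weight of $v\in B_i$ is the number of vertices of $G$ whose geodesic to $B_i$ first meets it at $v$; all weights are nonnegative. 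With this formula in hand, the argument would proceed in four stages: (1) a $W$-maximal cubic graph on $n$ vertices has all blocks of bounded order; (2) its block--cut tree is a path; (3) every interior block is a copy of $K_4-e$; (4) the two end blocks are pinned down by a finite computation, yielding $L_n$.

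For stage~(1) I would argue by contradiction. Let $G$ be $W$-maximal and cubic, and suppose some $2$-connected block $B$ has order at least $8$. Using an ear decomposition of $B$, locate inside it a ``handle'' that can be excised and replaced, on the same vertex set, by a chain of $K_4-e$ blocks joined by bridges, keeping $G$ cubic and connected but moving the two attachment vertices of the affected region from mutual distance $O(1)$ to distance $\Theta(|V(B)|)$. One then verifies, using the weighted decomposition above, that the external gain --- every pair of vertices separated by $B$ is pushed apart --- strictly dominates any loss in the internal Wiener index of the modified region. The cleanest way to package the monotonicity is: replacing a connected ``link'' on $m$ vertices whose two ports are at distance $d$ by a diamond chain on $m$ vertices (ports at distance $\approx m/2$) never decreases $W$, because in the weighted decomposition every weight is nonnegative and the surgery only lengthens the geodesics passing through the link.

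Stages~(2) and~(3) are exchange arguments of a familiar type. Once all blocks have order at most some constant, $G$ is a tree of small blocks, and a ``straightening'' move on the block--cut tree --- reattaching a pendant bundle of blocks at the far end of a longest path, in the same spirit in which one proves that $P_n$ maximizes $W$ among trees --- never decreases $W$; so we may assume the block--cut tree is a path. A further local exchange, trading any interior block of order at least $5$ for a $K_4-e$ with the surplus vertices redistributed as extra diamonds near the ends, forces every interior block to be $K_4-e$. The only remaining freedom is the pair of pendant blocks, whose total order is $10$ when $n\equiv 2\pmod 4$ and $12$ when $n\equiv 0\pmod 4$; here one enumerates the finitely many near-cubic blocks on $5$ and on $7$ vertices with a single degree-two attachment vertex, evaluates $W$ for each admissible pairing via the weighted decomposition, and checks that the pendant blocks drawn in Figures~\ref{fig:L18} and~\ref{fig:L20} are optimal, which identifies $L_n$ of~\cite{BalabanWe}.

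The main obstacle is stage~(1): proving rigorously that a large $2$-connected cubic chunk can always be unfolded into a diamond chain with a strict net increase of $W$, while preserving $3$-regularity and connectivity throughout. The delicacy lies in the competition between the internal Wiener index of the chunk, which can drop substantially, and the external gain; I expect a minimal-counterexample argument combined with a single well-chosen surgery (one handle at a time) together with the monotonicity statement above to be the right route, rather than any global restructuring. Everything downstream --- the block--cut-tree straightening and the end-block enumeration --- should be routine by comparison, using only the block decomposition formula for $W$ and a bounded case analysis.
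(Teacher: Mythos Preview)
The paper does not prove this statement: Conjecture~\ref{conj:3-bound} is presented as an open problem, accompanied only by the definition of $L_n$, a reference to~\cite{BalabanWe}, and the remark that analogous extremal graphs for $r\ge 4$ should be built from $K_{r+1}-e$ blocks. There is therefore no proof in the paper to compare your proposal against.

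As for the proposal itself, you are candid that it is an outline rather than a proof, and your own assessment of the difficulty is accurate: stage~(1) is the crux, and nothing in the sketch actually establishes it. The claimed ``monotonicity'' --- that replacing a connected link on $m$ vertices by a diamond chain on the same vertex set never decreases $W$ --- is asserted, not proved, and it is not obviously true in the generality you need. The internal Wiener index of a $2$-connected cubic block on $m$ vertices can be of order $m^3$ (think of a long M\"obius--Kantor-type strip), while the external gain from stretching the two ports apart is of order $m\cdot n_1 n_2$, where $n_1,n_2$ are the sizes of the two sides; when the block carries most of the vertices, $n_1 n_2$ may be small and the competition is genuine. Moreover, a single ear surgery inside a $2$-connected cubic block does not in general preserve $3$-regularity at the boundary without further adjustments, and those adjustments can cost you distance elsewhere. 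The downstream stages (block--cut-tree straightening, interior blocks forced to $K_4-e$, end-block enumeration) are plausible once blocks are bounded, but until stage~(1) is made rigorous the argument does not constitute progress beyond what the paper already suggests by pointing to the maximum-diameter constructions of~\cite{Kdmgt}.
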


We believe that similar statements hold also for higher $r\ge 4$, with intermediate
repetitive gadget $K_4-e$ replaced by $K_{r+1}-e$, where on both ends we attach
suitable gadgets so that the resulting graph will have $n$ vertices.
Actually, these graphs are those with the maximum diameter, see~\cite{Kdmgt},
where the problem of finding a regular graph of given order and degree with
maximum diameter is studied from a different point of view.

The cubic graphs with the minimum Wiener index are hard to describe for us but
it seems that they have the smallest diameter.
For suitable $n$, good candidates are the cage graphs, e.g. Petersen graph and Heawood graph.
Guided by our intuition, we believe that the following may hold.

\begin{conjecture}
\label{c.max_diameter}
Among all $r$-regular graphs on $n$ vertices, the maximum Wiener index
is attained by a graph with the maximum possible diameter.
\end{conjecture}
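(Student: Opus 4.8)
\medskip
\noindent\emph{Proof proposal.} Write $D=D(n,r)$ for the maximum diameter among $r$-regular graphs on $n$ vertices, and let $G$ be an $r$-regular $n$-vertex graph with $W(G)$ maximum. The plan is to argue by contradiction: assuming $\operatorname{diam}(G)<D$, produce another $r$-regular $n$-vertex graph $G'$ with $W(G')>W(G)$. Since essentially the only move that preserves both the vertex set and $r$-regularity is the \emph{$2$-switch} --- pick edges $ab,cd$ with $ac,bd\notin E(G)$ and replace them by $ac,bd$ --- the whole argument must be built around estimating $W(G')-W(G)$ for a carefully chosen (bounded) sequence of such switches. It is convenient to use the reformulation $W(G)=\binom n2+\sum_{k\ge 1}p_{>k}(G)$, where $p_{>k}(G)$ counts the unordered pairs at distance greater than $k$; so increasing $W$ means pushing the bulk of the pairs farther apart.

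First I would fix a diametral pair $u,v$ with $d_G(u,v)=\operatorname{diam}(G)=:\ell<D$ and look at the BFS layers $L_i=\{x: d_G(u,x)=i\}$, $0\le i\le \ell$, around $u$. Because $G$ has the same order and size as a maximum-diameter graph but realises only $\ell<D$, some layer must be ``too wide'', and the target is a $2$-switch that reroutes an edge out of such a wide layer so as to hang new vertices off $L_\ell$ at distance $\ell+1$ (and beyond) from $u$. The delicate point is to arrange that the family of pairs whose distance strictly increases outweighs the family whose distance decreases: a switch replacing $ab,cd$ by $ac,bd$ can only shorten a pair $\{x,y\}$ if \emph{every} $x$--$y$ geodesic ran through $ab$ or $cd$, and one would try to dominate that (``local'') set of pairs by the (``global'') gain from the newly elongated part of the graph.

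The main obstacle is exactly this accounting. A single $2$-switch typically lengthens a handful of distances and shortens a great many, and for an \emph{arbitrary} maximiser $G$ we have no a priori grip on its distance matrix, so the one-step comparison need not close. I therefore expect the proof to split into two stages. \emph{Stage one} is a structural rigidity lemma: a $W$-maximiser is ``chain-like'', i.e.\ it decomposes along small edge cuts into $O(1)$-size pieces strung in a line, with only $O(1)$ exceptional material at the two ends; this should follow by showing that any branching or ``thick'' region can be rearranged by switches into a longer chain with strictly larger $W$, the repeated gadget being $K_{r+1}-e$ --- matching the graphs $L_n$ of Conjecture~\ref{conj:3-bound} and the maximum-diameter constructions of~\cite{Kdmgt}. \emph{Stage two} then checks that among chain-like $r$-regular graphs $W$ increases with the number of gadgets, which equals both the diameter and $D$ up to an additive constant; this should be a routine if tedious calculation. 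Stage one carries the real difficulty --- ``spreading the graph out always raises $W$'' is essentially the conjecture itself --- and a reasonable first milestone is the cubic case $r=3$, i.e.\ Conjecture~\ref{conj:3-bound}, where the gadget $K_4-e$ and the two end-blocks are completely explicit.
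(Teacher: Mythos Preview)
The statement you are trying to prove is presented in the paper as a \emph{conjecture}, not a theorem; the authors write ``Guided by our intuition, we believe that the following may hold'' and offer no proof. So there is no paper proof to compare against --- the result is open.

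As for your proposal itself, it is an honest research outline rather than a proof, and you say as much: you identify the accounting after a $2$-switch as ``the main obstacle'' and concede that Stage one ``is essentially the conjecture itself''. That diagnosis is accurate, and it is also the reason the proposal does not close. The only concrete mechanism you introduce is the $2$-switch, together with the formula $W(G)=\binom{n}{2}+\sum_{k\ge 1}p_{>k}(G)$; but you give no argument that a switch (or bounded sequence of switches) applied to a $W$-maximiser of sub-maximal diameter must produce a net gain. Your own observation that a single switch ``typically lengthens a handful of distances and shortens a great many'' is exactly why this step cannot be left as a hope. The ``rigidity lemma'' of Stage one is not proved or even sketched --- it is a restatement of the target (chain-like structure) rather than a tool for reaching it --- and Stage two presupposes it. In short, the plan is reasonable as a direction of attack, and the link to the $K_{r+1}-e$ gadgets and to~\cite{Kdmgt} is the right heuristic, but nothing here goes beyond what the paper already signals: the conjecture is open, and even the cubic case (Conjecture~\ref{conj:3-bound}) is unresolved.
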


\begin{conjecture}
\label{c.max_diameter}
Among all $r$-regular graphs on $n$ vertices, the minimum Wiener index
is attained by a graph with the minimum possible diameter.
\end{conjecture}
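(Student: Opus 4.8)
The plan is to recast the statement as a question about how fast distance balls can grow in an $r$-regular graph, dispose of the ``ideally packed'' pairs $(n,r)$ outright, and attack the rest by an exchange argument on a minimizer. (Assume $r\ge 3$; for $r=2$ the cycle $C_n$ is the unique connected $2$-regular graph on $n$ vertices, so there is nothing to prove.) For a vertex $v$ write $B_j(v)=\{u\in V(G):d(u,v)\le j\}$ and $b_j(v)=|B_j(v)|$. Since $\sum_u d(u,v)=\sum_{j\ge 0}\bigl(n-b_j(v)\bigr)$, summing over $v$ gives
\begin{equation}\label{eq:ballsum}
2W(G)=\sum_{v\in V(G)}\ \sum_{j\ge 0}\bigl(n-b_j(v)\bigr).
\end{equation}
In a connected graph $b_0(v)=1$, $b_1(v)=r+1$, and for $j\ge 1$ one has the Moore-type inequality $b_{j+1}(v)-b_j(v)\le(r-1)\bigl(b_j(v)-b_{j-1}(v)\bigr)$, so $b_j(v)\le M(r,j):=1+r\frac{(r-1)^j-1}{r-2}$. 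Substituting $b_j(v)\le\min\{M(r,j),n\}$ into \eqref{eq:ballsum} yields a universal lower bound $W(G)\ge W_0(n,r)$, and equality forces the ball around every vertex to grow at the maximal rate until it fills $V(G)$; any graph attaining it has diameter exactly $D_{\min}:=\min\{j:M(r,j)\ge n\}$, which is also the Moore lower bound on the diameter. Hence for every pair $(n,r)$ that admits a graph meeting \eqref{eq:ballsum} with equality --- Moore graphs and a few other highly structured families --- the conjecture is immediate, and the substance is confined to those $(n,r)$ for which no such graph exists.

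For those pairs I would argue from an extremal configuration. Let $G^\ast$ minimize $W$ over all $r$-regular graphs on $n$ vertices, let $d^\ast(n,r)$ be the smallest diameter attainable in this class, and suppose $G^\ast$ has diameter larger than $d^\ast(n,r)$; fix a diametral pair $u,v$. The idea is to perform a sequence of $2$-switches --- delete independent edges $ab,cd$ and insert $ac,bd$ (or $ad,bc$), the elementary move preserving $r$-regularity --- each chosen near a shortest path between $u$ and $v$ so that it creates a shortcut pulling some far shell $S_j(u)$ closer to $u$, thereby decreasing several terms $n-b_j(u)$ in \eqref{eq:ballsum} without increasing the total. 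Iterating such improving switches, one would reach an $r$-regular graph on $n$ vertices of diameter $d^\ast(n,r)$ with Wiener index at most $W(G^\ast)$, so that some minimizer has the minimum possible diameter.

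The hard part is steering $W$ and the diameter simultaneously. A switch that shortens distances from $u$ typically lengthens some other distances, and there is no evident potential --- some weighting of $W$ against, say, the number of diametral pairs --- that is monotone under the admissible moves; it may even be that every single switch available at $G^\ast$ strictly increases $W$, forcing one to allow multi-step detours and to bound their net effect, which is exactly the place where a usable combinatorial estimate on ``how much a single shortcut can buy'' is missing. Moreover, for precisely the pairs $(n,r)$ of interest both $W(G^\ast)$ and the Wiener index of any min-diameter competitor lie strictly above $W_0(n,r)$, so the clean comparison available in the near-Moore case disappears. For this reason I expect the most accessible partial result to be the asymptotic version: for fixed $r$ and $n\to\infty$ a random $r$-regular graph already has diameter $(1+o(1))\log_{r-1}n=(1+o(1))D_{\min}$ and $W\sim\tfrac12 n^2\log_{r-1}n$, so proving that the exact minimum diameter is realized together with a Wiener index matching this leading term --- and then refining the lower-order terms --- would be a genuine step toward the full conjecture.
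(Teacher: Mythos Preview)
The statement you are attempting to prove is recorded in the paper as an \emph{open conjecture}, not a theorem; the authors give no proof and explicitly present it as something they ``believe'' based on intuition about the role of diameter in regular graphs. So there is no paper proof to compare against, and your proposal should be read as an attack on an open problem rather than a reconstruction of a known argument.

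Read that way, your write-up is an honest assessment of where the difficulty lies rather than a proof. The ball-growth formula \eqref{eq:ballsum} and the Moore bound cleanly dispose of the case where a Moore graph exists, but as you yourself note, those pairs $(n,r)$ are exceedingly rare and the substance of the conjecture lives entirely in the remaining cases. For those, your exchange argument via $2$-switches is only a heuristic: you need each switch to strictly decrease $W$ (or at least to be part of a short sequence with net decrease) while also making progress toward smaller diameter, and you correctly observe that a switch shortening distances from one vertex generically lengthens others, so no monotone potential is in hand. The sentence ``it may even be that every single switch available at $G^\ast$ strictly increases $W$'' is exactly the obstruction --- if the Wiener minimizer is a strict local minimum under $2$-switches but does not have minimum diameter, your scheme stalls, and nothing in the proposal rules this out.

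In short: the Moore-graph paragraph is fine but nearly vacuous in scope; the core $2$-switch step is a genuine gap, which you have already diagnosed; and the asymptotic remark at the end is a reasonable direction but is not the conjecture. What you have written is a research outline, not a proof, and that is consistent with the paper's own treatment of the statement as open.
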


%
%

\section{Graphs with prescribed diameter/radius}

The {\em eccentricity} ${\rm ecc}(v)$ of a vertex $v$ in $G$ is the largest distance from $v$ to
another vertex of $G$; that is, ${\rm max}\{d(v,w) \,\vert\, w\in V(G)\}$.
The {\em diameter} of $G$, denoted by ${\rm diam}(G)$,
is the maximum eccentricity in $G$.
Similarly, the {\em radius} of $G$, denoted by ${\rm rad}(G)$,
is the minimum eccentricity in $G$.

Plesn{\'\i}k \cite{P} obtained graphs with minimum Wiener index in the class of graphs of order $n$ and diameter $d$ ($d\leq n-1$). When $d<n-1$, they are cycle-containing graphs. In 1975 he \cite{P75} addressed the following problem.

\begin{problem}
What is the maximum Wiener index among graphs of order $n$ and
diameter $d$?
\end{problem}

This problem remains unsolved even under additional restrictions.
DeLaVi{\~n}a and Waller \cite{DeLa} conjectured the following.

\begin{conjecture}
Let $G$ be a graph with diameter $d>2$ and order $2d + 1$. Then
$W(G) \le W(C_{2d+1})$, where $C_{2d+1}$ denotes the cycle of length
$2d+1$.
\end{conjecture}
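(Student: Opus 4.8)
Put $n=2d+1$, and for a connected graph $H$ and $k\ge1$ let $p_k(H)$ be the number of unordered vertex pairs at distance at least $k$, so that $W(H)=\sum_{k\ge1}p_k(H)$. From the distance distribution of the odd cycle, $p_k(C_{2d+1})=(2d+1)(d+1-k)$ for $1\le k\le d$, whence $W(C_{2d+1})=\sum_{k=1}^{d}(2d+1)(d+1-k)=\frac{(2d+1)d(d+1)}{2}$, matching $\frac{n^3-n}{8}$. Since $p_1(G)=\binom{n}{2}=p_1(C_{2d+1})$ always and $p_2(G)=\binom{n}{2}-|E(G)|=p_2(C_{2d+1})+1-\beta(G)$, where $\beta(G)=|E(G)|-n+1\ge0$ is the cyclomatic number, the assertion $W(G)\le W(C_{2d+1})$ is equivalent to
\begin{equation}
\label{eq:tail}
\sum_{k=3}^{d}\bigl(p_k(C_{2d+1})-p_k(G)\bigr)\ \ge\ 1-\beta(G).
\end{equation}
In words: $G$ may carry at most one extra non-adjacent pair beyond the cycle, and any such surplus must be repaid by a deficit in the counts of the large distances; if $G$ contains a cycle the right-hand side is already nonpositive.

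\textbf{The easy case, and peeling pendants.}
If $\delta(G)\ge2$, then $W(G)\le W(C_n)=W(C_{2d+1})$ by the quoted fact that $C_n$ maximizes the Wiener index among $n$-vertex graphs of minimum degree at least $2$; only $n=2d+1$ is used. So assume $G$ has a pendant vertex $v$ with neighbour $u$; then $W(G)=W(G-v)+2d+\mathrm{tr}_{G-v}(u)$, with $G-v$ connected on $2d$ vertices and $\mathrm{diam}(G-v)\le d$. The plan is to iterate, peeling pendant vertices one at a time down to the $2$-core (which, being pendant-free, is covered by the easy case, while any leftover tree is covered by the next step). For this the inductive hypothesis must be sharp: one wants a bound of the shape ``every connected $H$ on $m$ vertices with $\mathrm{diam}(H)\le D$ satisfies $W(H)\le W(C_{2D+1})-g(2D+1-m)$'' with $g$ nonnegative, nondecreasing, $g(0)=0$, whose increments dominate $2d+\mathrm{tr}_{G-v}(u)$ minus the gained slack; here $\mathrm{tr}_{G-v}(u)$ is bounded using only that some vertex lies at each distance $1,\dots,\mathrm{ecc}_{G-v}(u)\le D$ from $u$ and all others are no farther.

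\textbf{The tree core.}
The calibration of $g$ is forced by the trees, where $\beta=0$ and \eqref{eq:tail} demands a genuine deficit of $1$; I would therefore settle the tree case directly with Wiener's formula $W(T)=\sum_{e=ij\in E(T)}n_e(i)n_e(j)$, $n_e(i)+n_e(j)=2d+1$. Fixing a diametral path $v_0v_1\cdots v_d$ in $T$, the condition $\mathrm{diam}(T)=d$ forces any pendant subtree rooted at $v_j$ to have depth at most $\min(j,d-j)$ (equivalently, $T$ has radius $\lceil d/2\rceil$), which severely restricts the possible shapes. One then maximizes $\sum_e n_e(i)n_e(j)$ over those shapes by a smoothing argument --- local rearrangements (redistributing leaves among branches, lengthening or contracting subtrees) that never decrease $W$ --- reducing to a bounded list of candidate trees which are checked directly; for $d=3$ the extremal tree is the balanced double star, and already there the maximum over trees equals $W(C_{2d+1})$, which both confirms the conjectured value and forces the precise choice of $g$.

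\textbf{Where I expect the difficulty.}
Unlike most extremal Wiener-index problems there is no monotone edge move joining $C_{2d+1}$ to the other optimizers: deleting any edge of $C_{2d+1}$ raises the diameter to $2d$, so one cannot simply sparsify toward the cycle, and the pendant case must be won by a self-contained optimization. The genuinely hard point is that the optimum among trees is attained by trees whose shape depends on $d$ in a non-obvious way and whose Wiener index only coincidentally equals $W(C_{2d+1})$; consequently the smoothing argument of Step~3 has many boundary configurations to dispose of, and --- more seriously --- the function $g$ of Step~2 must be fine enough to be tight on exactly those extremal trees. That the restriction $d>2$ is essential (for $d=2$ the star $K_{1,4}$ already beats $C_5$) shows that nothing cruder than such a sharp analysis of the tree case can succeed.
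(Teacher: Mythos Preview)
The paper does not prove this statement: it is recorded there as an \emph{open conjecture} of DeLaVi\~na and Waller, with no argument offered. So there is no ``paper's own proof'' to compare against, and what you have written is an outline toward an open problem, not a proof.

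Your easy case is fine: the paper does state that among all $n$-vertex graphs with $\delta\ge 2$ the cycle $C_n$ maximizes $W$, and this disposes of the pendant-free case using only $n=2d+1$. Your observation that trees can meet the bound with equality is also correct and important (for $d=3$ the double star $S_{3,2}$ gives $W=42=W(C_7)$; for $d=4$ the tree obtained from the path $v_0v_1v_2v_3v_4$ by attaching a leaf at $v_1$, a leaf at $v_3$, and a pendant path of length $2$ at $v_2$ gives $W=90=W(C_9)$), so the conjecture really is sharp on trees and your instinct that the tree case governs the difficulty is right.

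The genuine gap is that everything after the easy case is a plan, not an argument. The inductive slack function $g$ is never defined, let alone shown to satisfy the required recursion; the ``smoothing'' in the tree step is asserted but not performed; and the reduction from general $G$ with pendants to the $2$-core plus a tree bound is not made precise. This is not a minor omission: the paper explicitly notes that the problem of the maximum Wiener index among trees of given order $n$ and diameter $d$ is itself open except for $d\le 6$ and $d\ge n-3$, so the step you label ``the tree core'' would, if completed in the generality you need, already resolve a known open question for the diagonal $n=2d+1$. Your proposal therefore reduces one open problem to (a special case of) another; it does not close it.
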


Wang and Guo \cite{wang2} determined the trees with maximum Wiener index among trees of order $n$
and diameter $d$ for some special values of $d$, $2\leq d\leq 4$ or $n-3\leq d \leq n-1$. Independently, Mukwembi \cite{muk14} considered the diameter up to $6$ and showed that bounds he obtained are best possible. To find a sharp upper bound on the Wiener index for trees of given order and larger diameter could also be interesting.

For any connected graph $G$, ${\rm rad}(G)\leq {\rm diam}(G) \leq 2\, {\rm rad}(G)$.
By considering the close relationship between the diameter
and the radius of a graph, it is natural to consider the
above problem with radius instead of diameter~\cite{Chen}.

\begin{problem} \label{n-r} What is the maximum Wiener index among
graphs of order $n$𝑛 and radius $r$?
\end{problem}

Chen et al. \cite{Chen} characterized graphs with the maximum Wiener
index among all graphs of order 𝑛$n$ with radius two.
Analogous problem for the minimum Wiener index was posed by You and Liu \cite{You}.

\begin{problem} What is the minimum Wiener index among all graphs of order $n$𝑛 and radius $r$?
\end{problem}

Regarding this problem, Chen et al. \cite{Chen} stated the following conjecture.
For integers $n$, $r$, and $s$ with $n\geq 2r$, $r\ge 3$, and $n-2r+1\ge s\ge 1$, construct
a graph $G_{n,r,s}$ from a $2r$-cycle $v_1v_2\cdots v_{2r}$ so that
$v_1$ is replaced by $K_s$ and $v_2$ is replaced by $K_{n-2r+2-s}$,  connect $v_{2r}$  to each vertex of $K_s$, connect each vertex of $K_s$  to each vertex of $K_{n-2r+2-s}$, and connect each vertex of
$K_{n-2r+2-s}$ to $v_3$ (in other words $v_1$ is replicated  $s-1$ times, and
$v_2$ is replicated $n-2r+1-s$ times).  Notice that the resulting graph has $n$ vertices and radius $r$.

\begin{conjecture}
Let $n$ and $r$ be two positive integers with $n\geq 2r$ and $r\ge 3$. Then graphs $G_{n,r,s}$
for $s\in \{1,\ldots,r-1\}$ attain the minimum Wiener index in the class of graphs on $n$ vertices and with radius $r$.
\end{conjecture}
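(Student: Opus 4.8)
The plan is to establish the matching lower bound: every connected graph $G$ on $n$ vertices with ${\rm rad}(G)=r\ge 3$ satisfies $W(G)\ge W(G_{n,r,s})$, where the right‑hand side will be seen not to depend on $s$. The first, routine step is to evaluate $W(G_{n,r,s})$. Regard $G_{n,r,s}$ as the cycle $C_{2r}$ in which the vertex $v_1$ carries weight $s$, the vertex $v_2$ carries weight $m:=n-2r+2-s$, every other vertex carries weight $1$, and any two ``copies'' at the same position are at distance $1$; since contracting each weighted vertex turns a $G$-path into a $C_{2r}$-walk of the same or larger length, the $G$-distance between copies at positions $i\neq j$ is exactly $d_{C_{2r}}(i,j)$. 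Writing $W(G_{n,r,s})=\sum_j\binom{w_j}{2}+\sum_{i<j}w_iw_j\,d_{C_{2r}}(i,j)$ and splitting the second sum according to whether a pair of positions meets $\{1,2\}$, using $W(C_{2r})=r^3$ and the fact that every vertex of $C_{2r}$ has transmission $r^2$, one obtains
\[
  W(G_{n,r,s})=\Bigl(\tbinom s2+\tbinom m2+sm\Bigr)+(s+m)(r^2-1)+\bigl(r^3-2r^2+1\bigr)
  =\tbinom{n-2r+2}{2}+(n-2r+2)(r^2-1)+r^3-2r^2+1,
\]
because $\binom s2+\binom m2+sm=\binom{s+m}{2}$ and $s+m=n-2r+2$ is fixed. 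Call this common value $W^*(n,r)$. A direct check shows every vertex of $G_{n,r,s}$ has eccentricity exactly $r$, so ${\rm rad}(G_{n,r,s})={\rm diam}(G_{n,r,s})=r$, and it remains to prove $W(G)\ge W^*(n,r)$ for all radius‑$r$ graphs $G$.

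For the lower bound one would reduce to a well‑structured minimizer. Adding an edge never raises $W$ and never raises the radius, so a minimizer $G$ may be taken edge‑maximal subject to ${\rm rad}(G)=r$ (adding any further edge drops the radius below $r$); with some work a minimizer is then $2$‑connected and, after minor adjustments, self‑centered (${\rm diam}(G)=r$), since cut vertices and pairs at distance $>r$ are wasteful. The engine of the argument is an induction on $n$ whose deletion step uses \emph{true twins}: if $u,u'$ are true twins of $G$, then $d_G(w,u)=d_G(w,u')$ and $d_{G-u}(w,v)=d_G(w,v)$ for all $v,w\notin\{u,u'\}$, hence ${\rm ecc}_{G-u}(w)={\rm ecc}_G(w)$ for every $w\neq u$ and so ${\rm rad}(G-u)=r$; consequently $W(G)=W(G-u)+{\rm tr}_G(u)\ge W^*(n-1,r)+{\rm tr}_G(u)$, where ${\rm tr}_G(u)=\sum_{v}d_G(u,v)$. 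Since $W^*(n,r)-W^*(n-1,r)=(n-1)+(r-1)^2$, and since in $G_{n,r,s}$ each vertex of the blob $K_s\cup K_{n-2r+2-s}$ has transmission exactly $(n-1)+(r-1)^2$ while deleting any vertex of the cyclic skeleton lowers the radius, the induction closes provided one can show that a minimizer with $n>2r$ contains true twins $u,u'$ with ${\rm tr}_G(u)\ge (n-1)+(r-1)^2$. The base case $n=2r$, which asserts that $C_{2r}$ is the unique minimizer among radius‑$r$ graphs on $2r$ vertices, has to be handled separately.

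The hard part is precisely this structural input: showing that a minimal‑Wiener‑index graph of radius $r$ must ``look like a weighted $2r$‑cycle'', i.e.\ that its surplus mass collects into a clique‑like blob of remote twins. The naive bound from BFS layers $L_0,\dots,L_r$ about a centre $c$, namely ${\rm tr}_G(v)\ge (n-1)+\binom r2$ and $W(G)\ge \sum_{i<j}(j-i)n_in_j+\sum_i\binom{n_i}{2}$, is too weak for $r\ge 3$: it misses that, by $2$‑connectivity, reaching distance $r$ from any vertex requires a ``fan'' of two internally disjoint near‑geodesics (which is why in $G_{n,r,s}$ every vertex sees about two vertices at each distance $2,\dots,r$), and it mishandles the configuration in which $L_r$ is large and clique‑like, so that individual transmissions are small while ${\rm tr}(c)$ is large. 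Controlling this heavy‑far‑layer case — arguing that such a graph cannot beat $W^*(n,r)$, presumably by transporting part of $L_r$ toward $c$ along a second geodesic — is the crux; an additional nuisance is that the extremizers are not unique (already in small cases several non‑isomorphic graphs attain $W^*$), so any exchange or stability argument must be engineered to terminate at the whole family $\{G_{n,r,s}\}$ rather than at a single graph. If the twin‑deletion induction resists a fully general treatment, a fallback is to bound $W(G)$ below through the known minimum Wiener index among graphs of order $n$ and diameter $d$ for each $d\in\{r,\dots,2r\}$ (Plesn\'ik), but then one must separately discard those diameter‑$d$ extremal graphs whose radius is smaller than $r$.
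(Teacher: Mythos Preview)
The statement you are attempting to prove is recorded in the paper as an \emph{open conjecture} (attributed to Chen, Wu and An); the paper gives no proof, so there is no ``paper's proof'' to compare your attempt against.

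As for the attempt itself: your computation of $W(G_{n,r,s})$ is correct, and the observation that the value $W^*(n,r)=\binom{n-2r+2}{2}+(n-2r+2)(r^2-1)+r^3-2r^2+1$ is independent of $s$ is a clean and useful first step. But the lower bound --- which is the entire content of the conjecture --- is not established. You flag the gaps yourself: (i) that an edge-maximal minimizer is $2$-connected and self-centered is asserted ``with some work'' but no argument is given; (ii) the inductive engine requires every minimizer with $n>2r$ to contain a pair of true twins of transmission at least $(n-1)+(r-1)^2$, and you supply no proof of this --- you explicitly call it ``the crux'' and only explain why the obvious BFS-layer estimate is too weak; (iii) the base case $n=2r$, asserting that $C_{2r}$ minimizes $W$ among all radius-$r$ graphs on $2r$ vertices, is stated but not argued. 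Your proposed fallback through Plesn\'{\i}k's diameter-$d$ minimizers is likewise only a pointer: Plesn\'{\i}k's extremal graphs are a different family, and you would still have to rule out, for each $d\in\{r,\dots,2r\}$, those diameter-$d$ minimizers whose radius happens to be at least $r$ but whose Wiener index undercuts $W^*(n,r)$.

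In summary, what you have written is a reasonable strategy sketch with the decisive structural lemma missing, not a proof; since the paper itself presents the statement as open, this is unsurprising, but you should not label the write-up a proof.
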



\section{Congruence relations for Wiener index}

It was of interest to several authors to obtain congruence relations
for the Wiener index. The first result of this kind was proved by
Gutman and Rouvray \cite{GR-90}. They established the congruence
relation for the Wiener index of trees with perfect matchings.

\begin{theorem}[Gutman and Rouvray]
\label{thm:gut-rouv}
Let $T$ and $T'$ be two trees on the same number of vertices.
If both $T$ and $T'$ have perfect matchings, then
$W(T) \equiv W(T')\pmod 4$.
\end{theorem}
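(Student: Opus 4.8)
The plan is to reduce the statement to a clean parity computation using the edge-contribution formula from Theorem~\ref{W.th}. Recall that for a tree $T$ on $n$ vertices, $W(T) = \sum_{e=ij\in E(T)} n_e(i)\,n_e(j)$, where $n_e(i)+n_e(j)=n$. So the first step is to understand the parity of a single term $n_e(i)\,n_e(j) = n_e(i)\bigl(n-n_e(i)\bigr)$ modulo $4$. Since $T$ and $T'$ both have perfect matchings, $n$ is even, say $n=2m$; write $n_e(i)=a$, so the term is $a(2m-a)$. If $a$ is even, $a=2b$, this is $4b(m-b)\equiv 0\pmod 4$; if $a$ is odd, then $2m-a$ is odd, and $a(2m-a)\equiv 1\pmod 4$ precisely when $a\equiv 2m-a\pmod 4$, i.e. never both possibilities are symmetric — more carefully, an odd times odd is odd, and modulo $4$ it is $1$ if $a\equiv \pm 1$ are "aligned" and $3$ otherwise; the cleanest route is to observe $a(2m-a) = 2ma - a^2 \equiv 2m - 1 \pmod{?}$ is not quite it, so instead I would argue as follows.

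The key structural input is the perfect matching. For each edge $e$ of $T$, consider the two components $A$ and $B$ of $T-e$ with $|A|=a$, $|B|=2m-a$. I claim the parity of $a$ is determined by whether $e$ lies in the perfect matching $M$: if $e\in M$, then removing $e$ splits $M\setminus\{e\}$ into perfect matchings of $A$ and of $B$, so both $a$ and $2m-a$ are even; if $e\notin M$, then the matching edge at each endpoint of $e$ stays within its side, and a short parity argument on how $M$ restricts to $A$ shows $a$ is odd (the vertex of $e$ in $A$ is matched inside $A$, so $A$ minus that structure... ) — concretely, $A$ has a near-perfect matching missing exactly the endpoint of $e$ in $A$ only if $a$ is odd, and one checks this is forced. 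So: edges of $M$ contribute terms $\equiv 0\pmod 4$ (product of two even numbers, each divisible by $2$, hence product divisible by $4$), and edges not in $M$ contribute $a(2m-a)$ with both factors odd, hence each such term is odd. Therefore $W(T) \equiv (\text{number of edges of }T\text{ not in }M) \pmod 2$ at least; to get mod $4$ I need the finer fact that an odd$\times$odd term $a(2m-a)$ with $a+(2m-a)=2m$ satisfies $a(2m-a)\equiv 2m-1\pmod{?}$ — actually $a(2m-a)+1 = (a-1)(a+1) - (a^2 - 2ma) +\dots$; the slick identity is $a(2m-a) = m^2-(m-a)^2$, and since $a$ odd forces $m-a$ and $m$ of opposite parity... this gives $m^2-(m-a)^2 \pmod 4$ depending only on the parities of $m$ and $m-a$, i.e. only on the parity of $a$ and of $m$. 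Hence every not-in-$M$ edge contributes the \emph{same} residue $c(m)\in\{1,3\}$ mod $4$, depending only on $m$.

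Putting it together: $T$ has $2m-1$ edges, of which exactly $m$ lie in $M$, so exactly $m-1$ edges lie outside $M$; thus $W(T) \equiv (m-1)\,c(m) \pmod 4$, and likewise $W(T')\equiv (m-1)\,c(m)\pmod 4$ with the \emph{same} $m$ (since $T,T'$ have the same number of vertices) and the same $c(m)$. Therefore $W(T)\equiv W(T')\pmod 4$, as desired. The main obstacle, and the step I would be most careful about, is nailing down the mod-$4$ value of the odd$\times$odd terms uniformly — that is, proving these contributions are all congruent to one another mod $4$ and not merely all odd; the identity $a(2m-a)=m^2-(m-a)^2$ together with the parity of $a$ being pinned down by the matching is what makes this work, and I would present that lemma explicitly before assembling the count.
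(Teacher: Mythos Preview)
The paper does not actually prove this theorem: it is a survey and merely states the result with attribution to Gutman and Rouvray. So there is no proof in the paper to compare against, and your argument must be judged on its own.

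Your overall strategy is the right one: use the edge formula $W(T)=\sum_e n_e(i)\,n_e(j)$, split the edges according to membership in the perfect matching $M$, and show that the residue of $W(T)$ modulo $4$ is a function of $m=n/2$ alone. The identity $a(2m-a)=m^2-(m-a)^2$ is exactly the tool needed to pin down the odd$\times$odd contributions uniformly.

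However, your central structural claim is stated backwards. You assert that if $e\in M$ then both components of $T-e$ have even order, and if $e\notin M$ they have odd order. The opposite holds. If $e=uv\in M$, then in the component $A$ of $T-e$ containing $u$ the vertex $u$ has lost its partner $v$, while $M\setminus\{e\}$ perfectly matches $A\setminus\{u\}$; hence $|A|$ is odd. If $e=uv\notin M$, then the matching edge at $u$ (and at every other vertex of $A$) lies entirely inside $A$, so $M$ restricted to $A$ is a perfect matching of $A$ and $|A|$ is even. A sanity check on $P_4$ with $M=\{v_1v_2,v_3v_4\}$ confirms this: removing the matching edge $v_1v_2$ gives component sizes $1$ and $3$, while removing the non-matching edge $v_2v_3$ gives sizes $2$ and $2$.

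With the corrected version, the $m-1$ non-matching edges each contribute a product of two even numbers, hence $0\pmod 4$, and the $m$ matching edges each contribute $m^2-(m-a)^2$ with $a$ odd, which is $3\pmod 4$ if $m$ is even and $1\pmod 4$ if $m$ is odd. Thus $W(T)\equiv m\cdot c(m)\pmod 4$ depends only on $m$, and the theorem follows. Your final conclusion happens to survive because you only needed the residue to be a function of $m$, but as written your count ($m-1$ odd terms rather than $m$) and your intermediate formula are wrong: for $P_4$ your formula gives $(m-1)c(m)=1\cdot 3\equiv 3$, whereas $W(P_4)=10\equiv 2\pmod 4$.
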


A {\em segment} of a tree is a path contained in the tree whose terminal vertices
are branching or pendant vertices of the tree.
Dobrynin, Entringer and Gutman \cite{surv1} obtained a congruence relation
for the Wiener index in the class of {\em $k$-proportional trees}.
Trees of this class have the same order, the same number of segments,
and the lengths of all segments are multiples of $k$.

\begin{theorem}[Dobrynin, Entringer and Gutman]
\label{thm:co_DEG}
Let $T$ and $T'$ be two $k$-proportional trees. Then
$$
W(T)\equiv W(T') \pmod{k^{3}}\,.
$$
\end{theorem}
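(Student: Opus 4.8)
The plan is to use the edge version of Wiener's formula~(\ref{W.eq2}), namely $W(T)=\sum_{e=ij\in E(T)}n_e(i)n_e(j)$, and to group the edges according to the segments they lie on. Fix a $k$-proportional tree $T$ with segments $\sigma_1,\dots,\sigma_m$ of lengths $k\ell_1,\dots,k\ell_m$. For an edge $e$ on segment $\sigma_t$, write $n_e(i)=a$ and $n_e(j)=n-a$; as $e$ runs along $\sigma_t$ from one endpoint to the other, the value $a$ changes by exactly the sizes of the subtrees hanging off the internal vertices of $\sigma_t$, so within one segment $a$ takes the values $a_0,a_0+1,\dots$ only if the segment is ``pendant-free'' — in general $a$ runs over an arithmetic-like progression determined by the branch weights at the two ends. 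The key structural observation is that modulo $k$ the contribution of a whole segment depends only on its length $k\ell_t$ and on the two ``side masses'' at its ends, taken modulo $k$; and the latter are themselves built from segment lengths, hence multiples of suitable quantities. Concretely I would first reduce to showing that $W(T)\bmod k^3$ is a function only of the multiset $\{\ell_1,\dots,\ell_m\}$ together with the tree's ``segment-contraction'' structure (the tree obtained by contracting every segment to a single edge), and then observe that all $k$-proportional trees sharing order and number of segments can be connected by ``segment-length-shifting'' moves that preserve everything modulo $k^3$.

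The cleaner route, which I expect the authors take, is a direct counting one: parametrize each vertex $v$ of $T$ by recording, for the unique path in the segment-contracted tree, how far $v$ sits along its segment. Then $d(u,v)$ decomposes as (a sum of full segment lengths $k\ell_t$ along the connecting path) $+$ (two ``boundary corrections'' of size $<k$ at the first and last segment). Summing over all pairs $\{u,v\}$, the purely-segment part contributes a multiple of $k^3$ once one notes that the number of ordered vertex-pairs whose connecting path fully contains a given segment is a product of two subtree sizes, each of which is itself a sum of full segment lengths plus boundary terms; expanding the product $(\text{mult.\ of }k+r_1)(\text{mult.\ of }k+r_2)$ against the factor $k\ell_t$ shows the leading term is $\equiv 0\pmod{k^3}$ and the lower-order terms $k\cdot r_1 r_2 \ell_t$, $k^2\cdot(\cdots)$ depend only on data fixed across the class. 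The boundary-correction part of $d(u,v)$ only ever contributes pairs where $u$ or $v$ is internal to the first/last segment, and there the count is visibly the same for all trees in the class once the segment lengths are all multiples of $k$. I would organize this as: (1) set up the segment-contracted tree and the coordinates; (2) write $W(T)$ as a triple sum over (segment of $T_{\mathrm{contracted}}$) $\times$ (mass on one side) $\times$ (mass on other side); (3) expand each mass as (multiple of $k$) + (remainder), multiply out, and track each of the four resulting terms modulo $k^3$; (4) check term-by-term that everything surviving modulo $k^3$ is determined by the common invariants (order, number of segments, segment lengths mod — in fact exactly — and the abstract contracted tree), and finally note that two $k$-proportional trees of the same order and same number of segments in fact have the same contracted-tree data is \emph{not} needed: one only needs that the surviving terms cancel in the difference $W(T)-W(T')$.

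The main obstacle is step~(4): it is \emph{not} true that $W(T)$ itself modulo $k^3$ depends only on order and number of segments — only the \emph{difference} $W(T)-W(T')$ vanishes modulo $k^3$, so the bookkeeping has to be done on the difference, or else one must identify precisely which combination of the expanded terms is class-invariant. In particular the term of the form $\sum_t k\,\ell_t\,(\text{subtree masses})$ carries genuine dependence on how the segments are arranged, and one must argue that this dependence, when reduced modulo $k^3$, collapses — essentially because the ``masses'' themselves are $\equiv$ (sum of segment lengths on that side) and the segment lengths are all divisible by $k$, promoting that term to a multiple of $k^3$ up to a correction that is symmetric in a way that cancels. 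Handling this cleanly — i.e.\ showing the only non-$k^3$-divisible contributions come from within-a-single-segment pairs, whose count is literally identical for $T$ and $T'$ — is the crux; everything else is expansion of binomials and careful indexing, which I would not grind through here.
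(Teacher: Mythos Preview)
The paper does not prove this theorem; it is a survey, and Theorem~\ref{thm:co_DEG} is merely stated and attributed to \cite{surv1}. So there is no ``paper's own proof'' to compare against.

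That said, your plan has a real gap. Your starting point is right: apply (\ref{W.eq2}) and group the edges by segments, then use that for any edge $e$ on a segment the two side-masses satisfy $n_e(i)\equiv n_e(j)\equiv 1\pmod k$ (each is $1$ plus a sum of full segment lengths). Writing $A'=1+kA''$ for the mass at one end of a segment of length $k\ell$ and summing along that segment gives
\[
\sum_{j=0}^{k\ell-1}(A'+j)(n-A'-j)=k^{3}\ell A''B''+\tfrac{k^{2}\ell(k\ell+1)}{2}(L-\ell)+\binom{k\ell+2}{3},
\]
where $L=(n-1)/k$ and $A''+B''=L-\ell$. The first term is visibly $\equiv 0\pmod{k^{3}}$; the remaining two depend only on $\ell$ and $L$.

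Here is where your sketch goes wrong. You repeatedly fall back on the hope that what is left is ``literally identical for $T$ and $T'$'', or that the two trees share a common contracted-tree structure that lets you connect them by segment-length shifts. Neither is true: by definition $k$-proportional trees share only the order $n$ and the \emph{number} $m$ of segments, not the multiset of segment lengths and not the homeomorphism type. So the residual $\sum_t\bigl[\tfrac{k^{2}\ell_t(k\ell_t+1)}{2}(L-\ell_t)+\binom{k\ell_t+2}{3}\bigr]$ is genuinely different for $T$ and $T'$. What one must actually show is that this residual, reduced modulo $k^{3}$, depends only on $L$. Expanding, the residual equals $\tfrac{1}{6}\bigl(k^{3}(3LS_{2}-2S_{3})+3k^{2}L^{2}+2kL\bigr)$ with $S_{r}=\sum_t\ell_t^{r}$; hence $W(T)-W(T')=k^{3}\cdot(\text{integer})+\tfrac{k^{3}}{6}\bigl(3L(S_{2}-S_{2}')-2(S_{3}-S_{3}')\bigr)$. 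The bracket is divisible by $6$ because $\ell^{2}\equiv\ell\pmod 2$ and $\ell^{3}\equiv\ell\pmod 3$ force $S_{2}\equiv S_{2}'\equiv L\pmod 2$ and $S_{3}\equiv S_{3}'\equiv L\pmod 3$. That number-theoretic step is the crux you are missing, and without it the argument does not close. (Also, your ``boundary corrections of size $<k$'' is simply incorrect: an internal vertex of a segment of length $k\ell$ can sit at distance up to $k\ell-1$ from either endpoint.)
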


Theorem \ref{thm:gut-rouv} was recently generalized by Lin in \cite{Lin}
by establishing the congruence relation for the Wiener index of
trees containing $T$-factors.
A graph $G$ has a {\em $T$-factor} if there exist vertex disjoint
trees $T_1,T_2,\ldots, T_p$ such that
$V(G)=V(T_1)\cup V(T_2)\cup \cdots \cup V(T_p)$ and each $T_i$
is isomorphic to a tree $T$ on $r$ vertices.
If $T$ is a path on $r$ vertices, we say that the graph
$G$ has a $P_r$-factor. In this sense the well-known perfect
matching is a $P_2$-factor.

\begin{theorem}[Lin]
\label{thm:lin} If $T$ and $T'$ are two trees on the same number of
vertices, both with $P_r$-factors, then
$$
W(T) \equiv W(T')\pmod r \textit{\hspace{0.3 cm} for odd $r$,}
$$
and
$$
W(T) \equiv W(T')\pmod{2r} \textit{\hspace{0.3 cm} for even $r$.}
$$
\end{theorem}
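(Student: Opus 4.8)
The plan is to read off $W(T)$ modulo $r$ (and modulo $2r$) directly from Wiener's edge decomposition $(\ref{W.eq2})$, namely $W(T)=\sum_{e=ij\in E(T)}n_e(i)n_e(j)$, using a fixed $P_r$-factor $\mathcal F=\{P^{(1)},\dots,P^{(p)}\}$ (so that $n=pr$) to control the two side-sizes at every edge. First I would partition $E(T)$ into the $p(r-1)$ \emph{internal} edges, those lying inside some path of $\mathcal F$, and the $p-1$ remaining \emph{link} edges. For a link edge $e$, each $P^{(k)}$ is a connected subgraph of $T$ avoiding $e$ and therefore lies entirely in one component of $T-e$; hence both $n_e(i)$ and $n_e(j)$ are multiples of $r$, so $n_e(i)n_e(j)\equiv 0\pmod{r^2}$. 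Thus link edges contribute nothing modulo $r$, and also nothing modulo $2r$ when $r$ is even (since then $2r\mid r^2$).

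Next I would treat an internal edge $e$, say the edge $x_ix_{i+1}$ of $P^{(k)}=x_1x_2\cdots x_r$. Exactly one factor path, $P^{(k)}$ itself, is cut by $e$, placing $x_1,\dots,x_i$ on one side of $T-e$ and $x_{i+1},\dots,x_r$ on the other, while every other factor path (being connected) stays on a single side. Hence the two sides have sizes $i+rm$ and $(r-i)+r(p-1-m)$ for some integer $m\ge 0$. Modulo $r$ this yields $n_e(i)n_e(j)\equiv i(r-i)$, regardless of $m$. For even $r$ I would expand the product modulo $2r$: the $r^2$-term vanishes, and the two remaining cross-terms, each a multiple of $r$, collapse — using $r\equiv 0$ and $r-i\equiv i\pmod 2$ — to $r\big(i(p-1)\bmod 2\big)$, whence $n_e(i)n_e(j)\equiv i(r-i)+ri(p-1)\pmod{2r}$.

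Finally I would sum. Over the $r-1$ internal edges of one factor path, $\sum_{i=1}^{r-1}i(r-i)=\binom{r+1}{3}=W(P_r)$, so the principal contribution of all $p$ paths is $p\binom{r+1}{3}$; in the even case the correction sums to $pr(p-1)\binom r2=r\big(p(p-1)\binom r2\big)\equiv 0\pmod{2r}$, because $p(p-1)$ is even. Adding the link-edge contributions gives the clean formula
\[
  W(T)\equiv\frac{n}{r}\binom{r+1}{3}\pmod{r}\ \text{ for odd }r,\qquad W(T)\equiv\frac{n}{r}\binom{r+1}{3}\pmod{2r}\ \text{ for even }r .
\]
Since the right-hand side depends only on $n$ and $r$, it equals the corresponding value for $T'$, and the theorem follows. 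I expect the only genuinely delicate point to be the bookkeeping modulo $2r$ in the even case: one must carefully carry the two $r$-linear cross-terms of $n_e(i)n_e(j)$ and argue, via the parities of $r$ and of $p(p-1)$, that they disappear upon summation — the odd-$r$ statement needs none of this and drops out immediately from the reduction modulo $r$.
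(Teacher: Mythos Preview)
Your argument is correct. The edge partition into the $p(r-1)$ internal edges and the $p-1$ link edges is well-defined because the factor paths are vertex-disjoint; for a link edge every $P^{(k)}$ survives in $T-e$ and hence contributes a multiple of $r$ to each side, and for an internal edge of $P^{(k)}$ only $P^{(k)}$ is split while the other $p-1$ paths distribute themselves among the two components. Your modular bookkeeping is accurate: modulo $r$ one gets $\sum_{i=1}^{r-1} i(r-i)=\binom{r+1}{3}$ per factor path, and in the even case the cross-terms reduce to $ri(p-1)\pmod{2r}$ exactly as you say, summing to $r\,p(p-1)\binom{r}{2}\equiv 0\pmod{2r}$ since $p(p-1)$ is even. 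The resulting closed residue $W(T)\equiv \frac{n}{r}\binom{r+1}{3}$ (mod $r$, respectively $2r$) depends only on $n$ and $r$, which is exactly what is needed.

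As for the comparison: the present paper is a survey and does \emph{not} supply a proof of Lin's theorem; it merely quotes the result from \cite{Lin} and then records, again without proof, the generalizations of \cite{Gut-14} and \cite{congWe} (Theorems~\ref{thm:gxy}, \ref{thm:co_main}, \ref{thm:co_main2r}). So there is no in-paper argument to set your proof against. Your approach, however, is precisely the natural one via Theorem~\ref{W.th}/equation~(\ref{W.eq2}) and is in the same spirit as the tree-structure decomposition underlying Theorems~\ref{thm:co_main} and \ref{thm:co_main2r}: treat the $P_r$-blocks as supervertices and the link edges as superedges, and observe that all contributions are determined modulo $r$ (or $2r$) by local data. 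Your write-up in fact yields a slightly sharper statement than Lin's, namely an explicit residue $\frac{n}{r}\binom{r+1}{3}$ rather than merely the congruence $W(T)\equiv W(T')$.
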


Recently Gutman, Xu and Liu \cite{Gut-14} showed that the first
congruence in the above result is a special case of a much more
general result on the Szeged index.
As its consequence, for the Wiener index they obtained
the following result.

\begin{theorem}[Gutman, Xu and Liu]\label{thm:gxy}
Let $\Gamma_0$ be the union of connected graphs $G_1,G_2,\ldots,
G_p$, $p\geq 2$, each of order $r\geq 2$, all blocks of which are
complete graphs. Denote by $\Gamma $ a graph obtained by adding
$p-1$ edges to $\Gamma_0$ so that the resulting graph is connected.
Then
$$
W(\Gamma) \equiv \displaystyle\sum_{i=1}^{p} W(G_i)\pmod r\,.
$$
\end{theorem}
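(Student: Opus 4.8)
The plan is to peel off the pieces $G_i$ one at a time along bridges and induct on $p$. The base case $p=1$ is trivial since then $\Gamma=G_1$. For the inductive step I would first record a structural fact: because $\Gamma$ is obtained from the disjoint union $\Gamma_0$ by adding only $p-1$ edges and is connected, contracting each $G_i$ to a single vertex produces a tree $Q$ on $p$ vertices whose edges are precisely the $p-1$ added edges. In particular every added edge is a bridge of $\Gamma$, and for $u,v$ lying in the same $G_i$ one has $d_\Gamma(u,v)=d_{G_i}(u,v)$, since any walk from $u$ to $v$ that leaves $G_i$ must use some bridge twice and can therefore be shortened.

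Next I would choose a leaf $G_\ell$ of $Q$, joined to the rest of $\Gamma$ by a single added edge $e=xy$ with $x\in V(G_\ell)$, and set $\Gamma'=\Gamma-V(G_\ell)$. Then $\Gamma'$ is again a connected graph of the required form — the union of the remaining $p-1$ graphs joined by $p-2$ edges — on $(p-1)r$ vertices, so by the induction hypothesis $W(\Gamma')\equiv\sum_{i\neq\ell}W(G_i)\pmod r$. Now I apply the standard bridge decomposition of the Wiener index across $e$: since $e$ separates $V(G_\ell)$ from $V(\Gamma')$, every path from $a\in V(G_\ell)$ to $b\in V(\Gamma')$ uses $e$, and splitting it there gives $d_\Gamma(a,b)=d_{G_\ell}(a,x)+1+d_{\Gamma'}(y,b)$. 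Hence
$$W(\Gamma)=W(G_\ell)+W(\Gamma')+\sum_{a\in V(G_\ell)}\sum_{b\in V(\Gamma')}\bigl(d_{G_\ell}(a,x)+1+d_{\Gamma'}(y,b)\bigr).$$
Expanding the double sum, each of its three parts carries a factor $|V(G_\ell)|=r$ or $|V(\Gamma')|=(p-1)r$, so the whole double sum is $\equiv 0\pmod r$. Therefore $W(\Gamma)\equiv W(G_\ell)+W(\Gamma')\equiv\sum_{i=1}^{p}W(G_i)\pmod r$, which closes the induction.

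There is essentially one point where care is needed, and it is mild: verifying that the quotient $Q$ is a tree, so that the chosen added edge really is a bridge and the distance split is valid — this is exactly where the hypothesis "$p-1$ added edges, $\Gamma$ connected" is used; everything after that is bookkeeping. I would also remark that the assumption that every block of each $G_i$ is complete is not actually used in this argument: for the Wiener index the congruence holds for arbitrary connected graphs $G_1,\dots,G_p$ of a common order $r$. That hypothesis enters only in the stronger Szeged-index theorem of Gutman, Xu and Liu, from which Theorem~\ref{thm:gxy} was originally deduced as a corollary.
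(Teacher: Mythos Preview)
Your argument is correct. The induction via a leaf of the quotient tree $Q$ and the bridge decomposition of $W(\Gamma)$ are both sound; the key observation that each of the three parts of the cross double sum carries a factor of $r$ or $(p-1)r$ is exactly what makes the congruence drop out. Your remark that the ``all blocks complete'' hypothesis plays no role in the Wiener-index version is also correct and worth making explicit.

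As for comparison with the paper: the paper does not actually prove Theorem~\ref{thm:gxy}. It is quoted as a result of Gutman, Xu and Liu, obtained by them as a \emph{consequence} of a more general congruence for the Szeged index (where the block-completeness hypothesis is genuinely needed, since $\Sz(G)=W(G)$ precisely when every block of $G$ is complete). Your route is therefore genuinely different and more direct: rather than passing through the Szeged index and invoking the Dobrynin--Gutman characterization of when $\Sz=W$, you work with Wiener index alone and exploit only the tree structure of the contracted graph $Q$. What this buys you is an elementary, self-contained argument and the sharper statement (no block condition on the $G_i$); what the original route buys is a single theorem covering both $\Sz$ and $W$ at once. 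Your proof is in fact closer in spirit to the paper's own Theorem~\ref{thm:co_main}, which generalizes Theorem~\ref{thm:gxy} precisely by dropping the block hypothesis and allowing more flexible gluing.
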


In \cite{congWe} we generalized both the above results.
Let $r$ and $t$ be integers, $r\ge 2$ and $0\le t<r$.
Further, let $\mathcal H=\{H_1,H_2,\dots,H_{\ell}\}$ be a set of connected
graphs, such that for all $i$, $1\le i\le\ell$, we have
$|V(H_i)|\equiv -t\pmod r$.
Finally, let $\mathcal F=\{F_1,F_2,\dots,F_{\ell-1}\}$ be a set of
connected graphs, such that for all $j$, $1\le j\le\ell-1$, we have
$|V(F_j)|\equiv t+2\pmod r$.
For every $F_j$, choose vertices $v_j^1,v_j^2\in V(F_j)$.
We remark that the vertices $v^1_j$ and $v^2_j$ are not necessarily
distinct.
Denote by $\mathcal{G}=\mathcal{G}(\mathcal{H}, \mathcal{F})$ the set
of all graphs obtained when all the vertices $v_j^1$ and $v_j^2$,
$1\le j\le\ell-1$, are identified with some vertices of
$H_1\cup H_2\cup\dots\cup H_{\ell}$ so that the resulting graph is
connected.

Every graph in $\mathcal{G}$ contains $\ell$ graphs from $\mathcal H$,
$\ell-1$ graphs from $\mathcal F$, and each graph of $\mathcal F$
connects two graphs of $\mathcal H$.
Since the graphs in $\mathcal{G}$ are connected, if we contract every $H_i$
to a single vertex and we consider $F_j$'s as edges joining pairs of these
contracted vertices, then the resulting graph is a tree.
In this way, $H_1,H_2,\dots,H_{\ell}$ can be regarded as supervertices,
$F_1,F_2,\dots,F_{\ell-1}$ as superedges, and the resulting graph has
a tree structure.

In Figure~{\ref{fig:obluda}} we have one graph $G$ of $\mathcal{G}$ for
given parameters $r$, $t$ and $\ell$, and for given sets $\mathcal H$,
$\mathcal F$ and $\{v^1_j,v^2_j\}_{j=1}^{\ell-1}$.
The vertices of $H_j$'s are depicted by full circles in
Figure~{\ref{fig:obluda}} and the edges of $H_i$'s are thick.

\begin{figure}[hhh]
\begin{center}
\epsfig{file=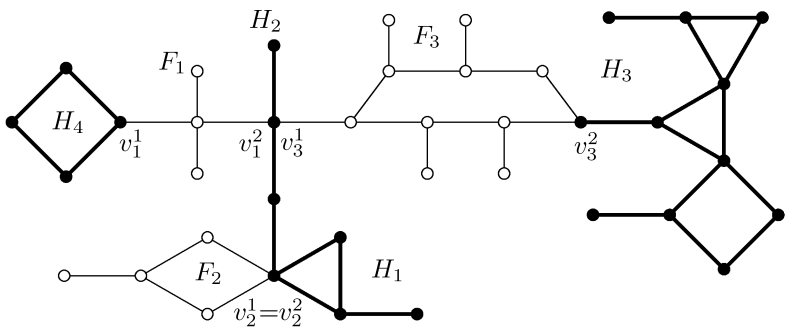,height=44mm}
\caption{A graph of $\mathcal{G}$ for $r=7$, $t=3$, $\ell=4$ and given
$H_i$'s, $F_j$'s and $v^k_j$'s.}
\label{fig:obluda}
\end{center}
\end{figure}

\begin{theorem}
\label{thm:co_main}
Let $G_1,G_2\in\mathcal{G}$.
Then $W(G_1)\equiv W(G_2)\pmod r$\,.
\end{theorem}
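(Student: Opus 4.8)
\emph{A plan.}
The idea is to decompose $W(G)$ into ``local'' contributions attached to the building blocks $H_1,\dots,H_\ell$ and $F_1,\dots,F_{\ell-1}$, and then to observe that modulo $r$ each such contribution is determined by that block together with the marked vertices $v^1_j,v^2_j$ only, hence is insensitive to how the gluing was performed. Write $n=|V(G)|$. Contracting every $H_i$ to a point and treating each $F_j$ as a superedge gives, by hypothesis, a tree $\mathcal T$ on the supervertices $\mathbf H_1,\dots,\mathbf H_\ell$. For each $i$ let $\pi_i\colon V(G)\to V(H_i)$ be the nearest-point projection onto $H_i$ (so $\pi_i(v)=v$ if $v\in V(H_i)$, and otherwise $\pi_i(v)$ is the ``gateway'' vertex at which the superedge leading toward $v$ is attached to $H_i$), and likewise let $\rho_j\colon V(G)\to V(F_j)$ be the nearest-point projection onto $F_j$, which lands in $\{v^1_j,v^2_j\}$ unless $v$ is an interior vertex of $F_j$. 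Both projections are well defined because $\mathcal T$ is a tree, so each block meets the rest of $G$ only at its attachment vertices, which are cut vertices of $G$.

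The heart of the argument is the identity
\[
d_G(x,y)=\sum_{i=1}^{\ell}d_{H_i}\!\bigl(\pi_i(x),\pi_i(y)\bigr)+\sum_{j=1}^{\ell-1}d_{F_j}\!\bigl(\rho_j(x),\rho_j(y)\bigr),\qquad x,y\in V(G),
\]
where $d_{H_i},d_{F_j}$ are distances computed inside the respective block. To prove it one follows the unique path in $\mathcal T$ joining the block of $x$ to the block of $y$: concatenating the intrinsic geodesics between the successive attachment vertices along this path produces an $x$--$y$ walk whose length is exactly the right-hand side (blocks off the path contribute $0$, since there $\pi_i$ or $\rho_j$ agree on $x$ and $y$), and one has to check that this walk is shortest. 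That last point --- ruling out ``shortcuts'' that leave and re-enter a block --- is the step I expect to be the main obstacle; it is the usual block/cut-vertex reasoning, the tree structure of $\mathcal T$ being precisely what prevents an excursion from re-entering a block except as a pure detour or by closing a cycle among the supervertices. Granting the identity and exchanging the order of summation in $W(G)=\sum_{\{x,y\}}d_G(x,y)$ then yields
\[
W(G)=\sum_{i=1}^{\ell}\ \sum_{\{p,q\}\subseteq V(H_i)}\mu_i(p)\mu_i(q)\,d_{H_i}(p,q)\ +\ \sum_{j=1}^{\ell-1}\ \sum_{\{p,q\}\subseteq V(F_j)}\nu_j(p)\nu_j(q)\,d_{F_j}(p,q),
\]
where $\mu_i(p)=|\pi_i^{-1}(p)|$, $\nu_j(p)=|\rho_j^{-1}(p)|$, and $\sum_p\mu_i(p)=\sum_p\nu_j(p)=n$; that is, $W(G)$ is a sum of vertex-weighted Wiener indices of the \emph{fixed} graphs $H_i$ and $F_j$.

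It remains to reduce the weights modulo $r$. Deleting $\mathbf H_i$ from $\mathcal T$ splits it into ``branches'', and a branch consisting of $a$ supervertices also carries $a$ superedge-interiors (its $a-1$ internal superedges plus the one joining it to $\mathbf H_i$); using $|V(H_k)|\equiv -t$ and $|V(F_k)|-2\equiv t\pmod r$, its number of $G$-vertices is $\equiv a(-t)+a\,t\equiv 0\pmod r$. Hence every weight $\mu_i(p)$ equals $1$ plus a multiple of $r$, so the $i$-th inner sum is $\equiv W(H_i)\pmod r$. Similarly, deleting $\mathbf F_j$ splits $\mathcal T$ into two subtrees, on $b$ resp.\ $\ell-b$ supervertices with $b-1$ resp.\ $\ell-b-1$ superedge-interiors, so each side has $\equiv -t\pmod r$ vertices; thus $\nu_j(v^1_j)\equiv\nu_j(v^2_j)\equiv -t\pmod r$ while $\nu_j(p)=1$ for interior $p$, and the $j$-th inner sum reduces modulo $r$ to
\[
\sum_{\{p,q\}\subseteq V(F_j)^{\circ}}d_{F_j}(p,q)\ -\ t\!\!\sum_{p\in V(F_j)^{\circ}}\!\!\bigl(d_{F_j}(v^1_j,p)+d_{F_j}(v^2_j,p)\bigr)\ +\ t^{2}\,d_{F_j}(v^1_j,v^2_j),
\]
where $V(F_j)^{\circ}=V(F_j)\setminus\{v^1_j,v^2_j\}$. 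Every quantity appearing here, and every $W(H_i)$, depends only on $\mathcal H$, $\mathcal F$ and the chosen vertices $v^k_j$; summing over $i$ and $j$ therefore gives a value of $W(G)\bmod r$ common to all $G\in\mathcal G$, which is the claim. Throughout I take the generic case $v^1_j\ne v^2_j$ (so that the interiors of the $F_j$ are genuine new vertex sets); the degenerate case is handled by the same bookkeeping. An alternative packaging of the identical computation is induction on $\ell$: remove a leaf $\mathbf H_{i_0}$ of $\mathcal T$ together with its incident superedge to get $G'$, express $W(G)-W(G')$ through the transmissions of the two freshly identified vertices in $G'$ and in $H_{i_0}$, and observe that those transmission terms cancel modulo $r$ exactly as above.
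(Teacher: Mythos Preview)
The paper is a survey and does not include a proof of this theorem; it is merely stated, with a pointer to~\cite{congWe}. So there is no in-paper argument to compare your attempt against.

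That said, your argument is sound. The distance identity
\[
d_G(x,y)=\sum_{i}d_{H_i}\bigl(\pi_i(x),\pi_i(y)\bigr)+\sum_{j}d_{F_j}\bigl(\rho_j(x),\rho_j(y)\bigr)
\]
holds for exactly the reason you indicate: every attachment vertex is a cut vertex of $G$ (the tree structure of $\mathcal T$ guarantees that deleting the interior of any block disconnects $G$, and in particular that $v_j^1$ and $v_j^2$ lie in different components once the interior of $F_j$ is removed), so a geodesic between two vertices of a block never leaves that block, and the shortest $x$--$y$ path in $G$ is the concatenation of intrinsic geodesics along the unique $\mathcal T$-path joining the blocks of $x$ and $y$. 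Summing over all pairs gives the weighted-Wiener decomposition, and your reduction of the weights, $\mu_i(p)\equiv 1$ and $\nu_j(v_j^s)\equiv -t\pmod r$, via the branch count $a(-t)+a\,t\equiv 0$ is the arithmetic heart of the matter and is correct as written. The induction on $\ell$ that you sketch at the end simply unrolls the same computation one leaf at a time.

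One small remark: your handling of the degenerate case $v_j^1=v_j^2$ is a bit quick. There the single marked vertex is identified with a vertex of some $H_{i_1}$ \emph{and} with a vertex of some $H_{i_2}$ (necessarily $i_1\ne i_2$, since $\mathcal T$ is a tree), so those two $H$-vertices are merged. The net effect on vertex counts is $(|V(F_j)|-1)-1=|V(F_j)|-2\equiv t\pmod r$, exactly as in the generic case, and the projections and weight congruences go through unchanged; but it would be cleaner to say this explicitly rather than deferring to ``the same bookkeeping''.
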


Now we generalize the second part of Theorem~{\ref{thm:lin}}.
Let $r$ be an even number, $r\ge 2$.
Further, let $\mathcal H=\{H_1,H_2,\dots,H_{\ell}\}$ be a set of trees,
such that for all $i$, $1\le i\le\ell$, we have $|V(H_i)|\equiv 0\pmod r$.
Finally, let $\mathcal F=\{F_1,F_2,\dots,F_{\ell-1}\}$ be a set of
trees, such that for all $j$, $1\le j\le\ell-1$, we have
$|V(F_j)|\equiv 2\pmod r$.
For every $F_j$, choose vertices $v_j^1,v_j^2\in V(F_j)$.
Denote by $\mathcal{G}^T$ the set of all graphs obtained when all the vertices
$v_j^1$ and $v_j^2$, $1\le j\le\ell-1$, are identified with some vertices
of $H_1\cup H_2\cup\dots\cup H_{\ell}$ so that the resulting graph is
connected.
Hence, $\mathcal{G}^T$ is a restriction of $\mathcal{G}$ when all the graphs
in $\mathcal H$ and $\mathcal F$ are trees and $t=0$.

\begin{theorem}
\label{thm:co_main2r}
Let $r$ be even and $G_1,G_2\in\mathcal{G}^T$.
Then $W(G_1)\equiv W(G_2)\pmod{2r}$\,.
\end{theorem}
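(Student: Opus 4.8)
The plan is to establish the sharp statement that $W(G)\bmod 2r$ is one and the same integer for every $G\in\mathcal G^T$, by evaluating Wiener's tree identity~(\ref{W.eq2}) edge by edge modulo $2r$ and checking that every surviving summand is a function of the fixed data only. First I would record the structural facts. As the $H_i$ and $F_j$ are trees glued along single vertices in a tree pattern, $G\in\mathcal G^T$ is a tree, of order $n:=n(G)=\sum_i n(H_i)+\sum_j\bigl(n(F_j)-2\bigr)\equiv 0\pmod r$, so $n$ is even. Call a \emph{branch} a maximal subgraph of $G$ hanging off one supervertex through a single superedge; by induction over the supertree (using $n(H_i)\equiv 0$, $n(F_j)\equiv 2\pmod r$) every branch has order $\equiv 0\pmod r$, hence even, and for each $F_j$ the number $A_j$ of vertices of $G$ on the $v_j^1$-side of $F_j$ and the number $A_j'$ on the $v_j^2$-side (so $A_j+A_j'=n-n(F_j)+2$) are both $\equiv 0\pmod r$ and even. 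Finally, for each $i$ fix a root of $H_i$ once and for all.

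Now decompose $E(G)=\bigcup_i E(H_i)\cup\bigcup_j E(F_j)$. For $e\in E(H_i)$, deleting $e$ splits $H_i$ into parts of sizes $p=p_i(e)$ (the root-avoiding side) and $n(H_i)-p$, and in $G$ the two sides of $e$ have sizes $p+\beta_e$ and $n(H_i)-p+\gamma_e$, where $\beta_e,\gamma_e$ are the branch masses attached on those same two sides, so $\beta_e+\gamma_e=n-n(H_i)$ and each is $\equiv 0\pmod r$ and even. Since $r$ is even, $\beta_e\gamma_e\equiv 0$ and $2p\beta_e\equiv 0\pmod{2r}$, hence $(p+\beta_e)(n(H_i)-p+\gamma_e)\equiv p\bigl(n(H_i)-p\bigr)+p\bigl(n-n(H_i)\bigr)+n(H_i)\beta_e\pmod{2r}$; summing over $e\in E(H_i)$ gives $W(H_i)$ (by~(\ref{W.eq2}) for the tree $H_i$), plus $\bigl(n-n(H_i)\bigr)\sum_{e}p_i(e)$ (which depends only on $H_i$ and its root), plus $n(H_i)\sum_e\beta_e$, which is $r$ times an even number, hence $0\bmod 2r$. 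So every $H_i$-block is identification-independent.

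For $e\in E(F_j)$ there are two cases. If $e$ is off the $v_j^1$--$v_j^2$ path, deleting it splits off a subtree of interior $F_j$-vertices, so the two side-sizes depend only on $F_j$ and $n$; this contribution is invariant. If $e$ lies on the path, the two sides of $e$ in $G$ have sizes $s^1(e)+A_j$ and $s^2(e)+A_j'$, where $s^1(e)+s^2(e)=n(F_j)-2$ and $s^1(e),s^2(e)$ depend only on $(F_j,v_j^1,v_j^2)$. Again using that $r$ is even, $A_jA_j'\equiv 0$ and $2s^1(e)A_j\equiv 0\pmod{2r}$, so the product is $\equiv s^1(e)s^2(e)+s^1(e)\bigl(n-n(F_j)+2\bigr)+A_j\bigl(n(F_j)-2\bigr)\pmod{2r}$; summed over the path, the last term equals $A_j\bigl(n(F_j)-2\bigr)$ times the path length, divisible by $r^2$ and hence by $2r$, while the remaining two sums depend only on $n$ and $(F_j,v_j^1,v_j^2)$. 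Collecting all blocks expresses $W(G)\bmod 2r$ through $r$, $n$, the trees $H_i$, and the rooted trees $(F_j,v_j^1,v_j^2)$ alone, which are common to $G_1$ and $G_2$; hence $W(G_1)\equiv W(G_2)\pmod{2r}$.

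I expect the modular reductions to be routine once one has catalogued which factors are divisible by $r$ and which are even; the part deserving real care is the structural bookkeeping for the superedges---getting the decompositions $n_e=p_i(e)+\beta_e$ and $n_e=s^1(e)+A_j$ right (watching the port vertices that the glued pieces share), checking that the off-path superedge edges truly contribute a fixed amount, and confirming invariance of each quantity I declare invariant. Finally I would verify that ``$r$ even'' enters exactly where stated: it is what promotes ``divisible by $r$'' to ``divisible by $2r$'' in the products $\beta_e\gamma_e$, $A_jA_j'$ and $A_j\bigl(n(F_j)-2\bigr)$, and what makes every branch even; for odd $r$, only the weaker Theorem~\ref{thm:co_main} survives.
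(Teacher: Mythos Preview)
The paper is a survey and does not give a proof of this theorem; it states the result and refers to~\cite{congWe} for the argument, so there is no in-paper proof to compare against.

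That said, your approach is correct and is the natural one: expand $W(G)$ via the edge formula~(\ref{W.eq2}) (legitimate because every $G\in\mathcal G^T$ is a tree) and reduce each edge contribution modulo~$2r$. The structural claim that every branch hanging off a supervertex has order divisible by $r$ follows by induction on the supertree from $n(H_i)\equiv 0$ and $n(F_j)-2\equiv 0\pmod r$, and with $r$ even this makes every such mass even as well. From there your modular reductions $\beta_e\gamma_e\equiv 0$, $2p\beta_e\equiv 0$, $n(H_i)\beta_e\equiv 0$, $A_jA_j'\equiv 0$, $2s^1(e)A_j\equiv 0$ and $A_j\bigl(n(F_j)-2\bigr)\equiv 0\pmod{2r}$ are all valid, the first because $r\mid\beta_e$, and the product-type ones because a product of two multiples of $r$ is a multiple of $r^2$, and $2r\mid r^2$ when $r$ is even. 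The bookkeeping you flag as delicate (port vertices, off-path edges of $F_j$) is handled correctly: for an off-path edge the detached piece lies entirely in the interior $V(F_j)\setminus\{v_j^1,v_j^2\}$ and has a size depending only on $(F_j,v_j^1,v_j^2)$; for on-path edges your decomposition into $s^1(e)+A_j$ and $s^2(e)+A_j'$ with $A_j+A_j'=n-n(F_j)+2$ is the right one. One cosmetic remark: in the on-path sum you do not need the path-length factor, since each individual term $A_j\bigl(n(F_j)-2\bigr)$ is already $0\bmod 2r$. Your final paragraph correctly isolates exactly where the hypothesis ``$r$ even'' is used.
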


Theorems~{\ref{thm:co_main}} and ~{\ref{thm:co_main2r}} show limits of
Theorem~{\ref{thm:lin}}.  Note that a segment can be defined on graphs
as well, and similarly one can define $k$-proportional graphs. So,
it would be interesting to find analogous limits for
Theorem~{\ref{thm:co_DEG}}.

\begin{problem}
Let $G$  and $G$ be two $k$-proportional graphs.
Under which conditions we have $W(G)\equiv W(G') \pmod{k^{3}}\,?$
\end{problem}


\section{Wiener index and the line graph operation}

Let $G$ be a graph. Its {\em line graph}, $L(G)$, has vertex set identical with the set of
edges of $G$ and two vertices of $L(G)$ are adjacent if and only if the corresponding edges are adjacent in $G$.
{\em Iterated} line graphs are defined inductively as follows:

\begin{displaymath}
L^i(G)=\left\{ \begin{array}{ll}
G & \textrm{if $i=0$,}\\
L(L^{i-1}(G)) & \textrm{if $i>0$.}
\end{array}\right.
\end{displaymath}

The main problem here is to determine the relation between $W(L(G))$ and
$W(G)$.
Particularly, we focuss on graphs $G$ satisfying
\begin{equation} \label{eq.WLW}
    W(L(G))=W(G),
\end{equation}
see \cite{dgms-ewig-09,DobryninM05,dm-wilgcn-05,GutmanE96,gp-mdlg-97}, and
in particular see the expository papers~\cite{DM3, KS_chapter}.
Let us remark that in the literature one easily encounters the term
{\em edge-Wiener index} of $G$, which is actually the Wiener index
of the line graph, sometimes shifted by ${n \choose 2}$, see \cite{KYAW}.

The following remark of Buckley~\cite{Buckley81} is a  pioneering work in this area.

\begin{theorem}[Buckley, 1981]
\label{Buckley.thm}
For every tree $T$, $W(L(T))=W(T)- {n \choose 2}$.
\end{theorem}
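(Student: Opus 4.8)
The plan is to imitate the proof of Wiener's Theorem~\ref{W.th}: I will rewrite $W(L(T))$ as a sum of contributions indexed by the edges of $T$, recall from (\ref{W.eq2}) that $W(T)$ equals the analogous edge sum, and then compare the two edge by edge. Throughout, $T$ has $n$ vertices, hence $n-1$ edges, so $L(T)$ has $n-1$ vertices.

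The geometric core is the following claim: for distinct edges $e,f\in E(T)$,
\[
  d_{L(T)}(e,f)=1+s(e,f),
\]
where $s(e,f)$ denotes the number of edges $g\in E(T)$ such that one component of $T-g$ contains both ends of $e$ while the other contains both ends of $f$. To prove this, write $e=ab$, $f=cd$, pick ends $b\in e$ and $c\in f$ at minimum distance in $T$, and let $b=v_0,v_1,\dots,v_\ell=c$ be the $b$--$c$ path. Minimality forces $a\ne v_1$ and $d\ne v_{\ell-1}$, so
\[
  ab,\;v_0v_1,\;v_1v_2,\;\dots,\;v_{\ell-1}v_\ell,\;cd
\]
is a walk of length $\ell+1$ from $e$ to $f$ in $L(T)$; conversely, any $e$--$f$ walk in $L(T)$ is a chain of pairwise adjacent edges of $T$ which must use each of the $\ell$ middle edges $v_iv_{i+1}$, since two consecutive edges of such a chain lying on opposite sides of the cut $T-v_iv_{i+1}$ cannot share a vertex unless one of them is $v_iv_{i+1}$. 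Hence $d_{L(T)}(e,f)=\ell+1$. Finally, these $\ell$ middle edges are exactly the edges separating $e$ from $f$: each of them clearly does; $e$ and $f$ straddle their own removal; and deleting any other edge leaves $b$ and $c$ in a common component of $T-g$, so $e$ and $f$ are not separated. Thus $s(e,f)=\ell$, proving the claim; the case $\ell=0$ (edges $e,f$ sharing a vertex) is immediate.

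Summing the claim over all unordered pairs of edges and regrouping according to the separating edge gives
\[
  W(L(T))=\sum_{\{e,f\}\subseteq E(T)}\bigl(1+s(e,f)\bigr)
  =\binom{n-1}{2}+\sum_{g\in E(T)}\#\bigl\{\{e,f\}:g\text{ separates }e\text{ and }f\bigr\}.
\]
For a fixed edge $g$, the two components of $T-g$ are subtrees on $p$ and $q$ vertices with $p+q=n$, hence with $p-1$ and $q-1$ edges, and a pair $\{e,f\}$ is separated by $g$ precisely when $e$ lies in one of these subtrees and $f$ in the other; there are $(p-1)(q-1)$ such pairs. By (\ref{W.eq2}), the corresponding edge sum for $W(T)$ is $\sum_{g\in E(T)}pq$. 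The proof then closes with the identity $pq-(p-1)(q-1)=p+q-1=n-1$, which yields $\sum_{g\in E(T)}\bigl(pq-(p-1)(q-1)\bigr)=(n-1)^2$ and hence
\[
  W(T)-W(L(T))=(n-1)^2-\binom{n-1}{2}=\binom{n}{2}.
\]

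The one genuine obstacle is the geometric claim: it requires pinning down the shape of a geodesic between two vertices of $L(T)$ — choosing the closest pair of ends correctly, verifying that the far ends $a$ and $d$ fall off the connecting path, and matching ``middle edge'' with ``separating edge'' — together with a tidy treatment of the degenerate case in which $e$ and $f$ meet. After that, only the routine bookkeeping above and a single invocation of Wiener's theorem remain.
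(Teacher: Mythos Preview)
Your proof is correct. The paper itself does not prove Buckley's theorem---it is merely stated with a citation to \cite{Buckley81}---so there is no argument in the paper to compare yours against. Your route (establishing $d_{L(T)}(e,f)=1+s(e,f)$ via the separating-edge count, then swapping the order of summation to land on Wiener's edge formula~(\ref{W.eq2})) is clean and self-contained, and it dovetails naturally with the edge-contribution viewpoint of Section~2.
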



By the above result, Wiener index of a line graph of a tree is strictly smaller
than the Wiener index of the original tree.
An interesting generalization of this was given by Gutman~\cite{g-dlg-96}:

\begin{theorem}
If $G$ is a connected graph with $n$ vertices and $m$ edges, then
$$W(L(G)) \geq W(G)-n(n-1) + \frac{1}{2} m(m+1).$$
\end{theorem}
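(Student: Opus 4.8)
The plan is to compare distances in $L(G)$ with distances in $G$ by relating a shortest path between two edges $e$ and $f$ of $G$ to a walk in $G$, and to account carefully for the ``boundary'' edges where the two descriptions disagree. The starting point is the classical observation that if $e=ab$ and $f=cd$ are edges of $G$, then $d_{L(G)}(e,f)$ equals the length of a shortest walk in $G$ that starts at an endpoint of $e$, ends at an endpoint of $f$, and uses neither $e$ nor $f$ as its first or last step — more precisely, one has $d_{L(G)}(e,f)=\min\{d_{G}(x,y)\}+1$ where the minimum is over $x\in\{a,b\}$, $y\in\{c,d\}$, with a correction only in the few cases where $e$ and $f$ lie close together or share a vertex. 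First I would make this correspondence precise and extract the clean inequality $d_{L(G)}(e,f)\ge d_G(x^*,y^*)+1-\varepsilon(e,f)$ for an appropriate choice of endpoints $x^*,y^*$, where $\varepsilon(e,f)$ is a small nonnegative error term that is nonzero only for pairs of adjacent or equal edges.

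Next I would sum over all unordered pairs $\{e,f\}\subseteq E(G)$. Writing $e=a_eb_e$, summing $d_G(a_e,b_f)+d_G(a_e,a_f)+d_G(b_e,b_f)+d_G(b_e,a_f)$ over the pair and over the $\binom m2$ pairs of distinct edges, one recognizes that each vertex $v$ of $G$ is an endpoint of $\deg(v)$ edges, so the double sum of $d_G(\cdot,\cdot)$ over edge-endpoints reproduces $\sum_{u,v}\deg(u)\deg(v)\,d_G(u,v)$, i.e. twice the Gutman index, plus lower-order diagonal terms. The key step is therefore a counting identity expressing $\sum_{\{e,f\}} (\text{sum of the four endpoint distances})$ in terms of $\Gut(G)$, and then bounding $\Gut(G)$ from below by $W(G)$ since every degree is at least $1$ (here connectedness with $n\ge2$ gives $\deg(v)\ge1$, hence $\deg(u)\deg(v)\ge1$). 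The terms $n(n-1)$ and $\tfrac12 m(m+1)$ should emerge as exactly the bookkeeping corrections: $\tfrac12 m(m+1)=\binom m2+m$ collects the ``$+1$'' contributed once per pair of edges plus the diagonal $d_{L(G)}(e,e)=0$ adjustment, while $n(n-1)$ absorbs the over-counting coming from the two endpoints of each edge and from the error term $\varepsilon$ over adjacent edge pairs.

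The main obstacle, as usual in line-graph distance arguments, is the careful treatment of $\varepsilon(e,f)$ — the pairs of edges that are adjacent in $G$ or that force a shortest walk in $G$ to ``waste'' a step because the only connecting walk would have to reuse $e$ or $f$. I expect one needs a case analysis on whether $e$ and $f$ share a vertex, are at distance one in $L(G)$, or are joined through a vertex of degree $2$; in each case the discrepancy between $d_{L(G)}(e,f)$ and $d_G(x^*,y^*)+1$ is at most a small constant, and the total error summed over all such pairs is bounded by a quantity of the form $c_1 n + c_2\sum_v\binom{\deg v}{2}$ that can be absorbed into the $n(n-1)$ term using $\sum_v\deg(v)=2m$ and the handshake-type estimates. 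Once this error budget is shown not to exceed what the stated bound allows, the inequality follows by assembling the pieces. A cleaner alternative, which I would try first, is to prove the inequality by induction or by a direct injection-style argument pairing shortest paths in $L(G)$ with walks in $G$, thereby avoiding the explicit constant-chasing; but I suspect the counting route above is the most transparent and is the one most likely to yield exactly the constants $n(n-1)$ and $\tfrac12 m(m+1)$.
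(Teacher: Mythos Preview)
The paper does not give a proof of this statement; it is quoted as Gutman's result with a citation to \cite{g-dlg-96}, so there is no argument in the text to compare your attempt against. I can therefore only assess whether your sketch would actually establish the inequality.

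Your main route has a directional problem. You correctly note that for distinct edges $e=ab$ and $f=cd$ one has $d_{L(G)}(e,f)=\min_{x\in e,\,y\in f}d_G(x,y)+1$, and that summing the four endpoint distances $d_G(a,c)+d_G(a,d)+d_G(b,c)+d_G(b,d)$ over all pairs $\{e,f\}$ produces (up to diagonal corrections) the Gutman index. But the minimum of four numbers is at most their average, not at least; so this identity yields
\[
4\,W(L(G))\;\le\;\bigl(\Gut(G)-m\bigr)+4\binom{m}{2},
\]
the \emph{upper} bound in the sandwiching theorem quoted later in the paper, not a lower bound. There is no inequality of the form $\min\ge\tfrac14(\text{sum})-c$ with a uniform constant $c$ (take two edges sharing a vertex whose other endpoints are far apart), so you cannot recover a lower bound on $W(L(G))$ in terms of $\Gut(G)$ by this averaging. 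Even if you invoke the genuine lower bound $W(L(G))\ge\tfrac14(\Gut(G)-m)$ from the paper's Theorem on sandwiching, combining it with $\Gut(G)\ge W(G)$ only gives $W(L(G))\ge\tfrac14\bigl(W(G)-m\bigr)$, which is a different and, for sparse graphs, strictly weaker inequality than the one you are asked to prove. In particular it misses the equality case for trees, where the target bound reads $W(L(T))\ge W(T)-\binom{n}{2}$ and is sharp by Buckley's theorem.

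What is missing is a vertex-to-edge selection argument rather than an averaging one. Fix for each vertex $v$ an incident edge $\phi(v)$; then $d_{L(G)}(\phi(u),\phi(v))\ge d_G(u,v)-1$ for all $u\ne v$, because the other endpoints of $\phi(u),\phi(v)$ are within distance~$1$ of $u,v$. Summing this over the $\binom{n}{2}$ vertex pairs and using $d_{L(G)}(e,f)\ge 1$ on the remaining edge pairs gets you within an additive $m$ of the stated bound; closing that last gap, and handling the failure of injectivity of $\phi$ when $G$ is a tree, is where the actual content lies. Your proposal never isolates this selection step, and the hoped-for emergence of the constants $n(n-1)$ and $\tfrac12 m(m+1)$ from ``bookkeeping'' of an $\varepsilon$-term is not something the Gutman-index calculation can deliver.
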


In addition, regarding Theorem~\ref{Buckley.thm}, Gutman and
Pavlovi{\' c}~\cite{gp-mdlg-97} showed that the Wiener index of a
line graph is not greater than the Wiener index of the original
graph even if we allow a single cycle in the graph.

\begin{theorem}
If $G$ is a connected unicyclic graph with $n$ vertices, then $W(L(G)) \leq W(G)$,
with equality if and only if $G$ is a cycle of length $n$.
\end{theorem}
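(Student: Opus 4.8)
\medskip
\noindent\textbf{Proof proposal.}
I would argue by induction on $n$. Since $L(C_n)=C_n$, we have $W(L(C_n))=W(C_n)$, which disposes of the case $G=C_n$ (in particular the base case $n=3$) and yields the stated equality, so it remains to show $W(L(G))<W(G)$ whenever $G$ is connected, unicyclic and $G\neq C_n$. Such a $G$ has a vertex $v$ of degree $1$ (a connected graph with as many edges as vertices and minimum degree at least $2$ is a cycle), and necessarily $n\ge 4$. Let $u$ be the neighbour of $v$, let $e_v=uv$, and let $G'=G-v$, which is connected, has $n-1$ vertices and $n-1$ edges, hence is unicyclic and covered by the induction hypothesis. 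Attaching the pendant vertex $v$ at $u$ increases every distance from a vertex of $G'$ to $v$ by exactly $1$ and changes no other distance, so
\[
W(G)=W(G')+\sigma_{G'}(u)+(n-1),\qquad\text{where }\sigma_{G'}(u):=\sum_{w\in V(G')}d_{G'}(u,w).
\]
For the line graph I would use the standard distance formula $d_{L(H)}(e,f)=1+\min\{d_H(x,y)\}$, the minimum over endpoints $x$ of $e$ and $y$ of $f$, valid for distinct edges of a connected graph with at least two edges, together with the fact that the neighbours of $e_v$ in $L(G)$ are exactly the edges of $G$ at $u$ and these form a clique, so that $L(G')$ embeds isometrically in $L(G)$. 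Since $v$ is a leaf, $d_{L(G)}(e_v,f)=1+\min_{y\in f}d_{G'}(u,y)$ for $f\in E(G')$, and as $|E(G')|=n-1$ this gives
\[
W(L(G))=W(L(G'))+(n-1)+\sum_{f\in E(G')}\min_{y\in f}d_{G'}(u,y).
\]

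Comparing the two displays and invoking the induction hypothesis $W(L(G'))\le W(G')$, the theorem reduces to the inequality $\sum_{f=yz\in E(G')}\min(d_{G'}(u,y),d_{G'}(u,z))\le\sigma_{G'}(u)-1$. I would prove this by a breadth-first search from $u$ in $G'$: each of the $|V(G')|-1$ tree edges joins a vertex $w\neq u$ to its parent and contributes $d_{G'}(u,w)-1$ to the sum, for a total of $\sigma_{G'}(u)-(|V(G')|-1)$, while the single remaining (non-tree) edge $ab$ contributes $\min(d_{G'}(u,a),d_{G'}(u,b))\le|V(G')|-2$, since both values are at most the diameter $|V(G')|-1$ and they cannot both equal $|V(G')|-1$ (a geodesic of that length from $u$ to $a$ would be a Hamiltonian path on which $b$ occurs strictly before $a$, forcing $d_{G'}(u,b)<|V(G')|-1$). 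Adding the two contributions gives at most $\sigma_{G'}(u)-1$, and then $W(L(G))\le W(G')+(n-1)+\sigma_{G'}(u)-1=W(G)-1<W(G)$, completing the induction.

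The step I expect to be the main obstacle is this key inequality $\sum_{yz\in E(G')}\min(d(u,y),d(u,z))<\sigma_{G'}(u)$. It is false for general connected graphs --- for $K_n$ the left-hand side is $\binom{n-1}{2}$ while the right-hand side is only $n-1$ --- so the proof must use the edge count $|E(G')|=|V(G')|$ in an essential way; the content is precisely that a unicyclic graph is a spanning tree plus one edge, which is exactly what makes the BFS accounting close with just a single extra term to control. The remaining ingredients --- the line-graph distance formula and the isometric embedding of $L(G')$ into $L(G)$ after attaching the pendant edge --- are routine and should be straightforward to verify.
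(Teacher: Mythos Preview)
The paper does not supply its own proof of this theorem; it is a survey and simply attributes the result to Gutman and Pavlovi\'c \cite{gp-mdlg-97}, so there is no in-paper argument against which to compare your attempt.

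That said, your induction is correct. The three ingredients all check out. The line-graph distance formula $d_{L(H)}(e,f)=1+\min_{x\in e,\,y\in f}d_H(x,y)$ is valid for any two distinct edges of a simple connected graph: the lower bound comes from turning a length-$k$ path in $L(H)$ into a length-$(k-1)$ walk in $H$ between an endpoint of $e$ and an endpoint of $f$, and the upper bound from extending a shortest $x_0$--$y_0$ path in $H$ by $e$ at one end and $f$ at the other (the minimality of $m$ prevents $e$ or $f$ from coinciding with the terminal edges of that path). The isometric embedding of $L(G')$ in $L(G)$ is immediate because $e_v$ is attached to a clique of $L(G')$, so no shortcuts are created. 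Finally, your BFS accounting gives
\[
\sum_{yz\in E(G')}\min\bigl(d_{G'}(u,y),d_{G'}(u,z)\bigr)=\sigma_{G'}(u)-\bigl(|V(G')|-1\bigr)+\min\bigl(d_{G'}(u,a),d_{G'}(u,b)\bigr),
\]
and your Hamiltonian-path argument correctly bounds the last term by $|V(G')|-2$: if both distances were $|V(G')|-1$ then the BFS tree would itself be a path with a unique vertex at maximal depth, forcing $a=b$. This yields $W(L(G))\le W(G)-1$ whenever $G$ has a leaf, which is exactly what is needed.

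Your remark that the key inequality fails badly for dense graphs such as $K_n$ is well taken: the argument genuinely hinges on $|E(G')|=|V(G')|$, i.e.\ on there being exactly one non-tree edge to control.
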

In connected bicyclic graphs all the three cases $W(L(G)) < W(G)$, $W(L(G)) = W(G)$,
and $W(L(G)) > W(G)$ occur \cite{gp-mdlg-97}. There are
26 bicyclic graphs of order 9 with the property $W(L(G)) = W(G)$ \cite{dob1-97,gjd-sgdgedlg-97}, and already
166 ten-vertex vertices with this property, see \cite{DM3}.

The following result tells us that in most cases (\ref{eq.WLW})
does not hold for graphs of minimum degree at least 2, see \cite{CDKSV,Wu}.

\begin{theorem}\label{thm:1}
Let $G$ be a connected graph with $\delta(G)\geq 2$. Then
$$
W(L(G))\geq W(G).
$$
Moreover, the equality holds only for cycles.
\end{theorem}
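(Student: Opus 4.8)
The plan is to compare distances in $L(G)$ with distances in $G$ via a vertex-by-vertex (edge-by-edge) accounting, exploiting the hypothesis $\delta(G)\ge 2$ to guarantee that every vertex of $G$ lies on a cycle, so that no edge is a ``bottleneck''. First I would recall the standard relation between the two metrics: if $e=ab$ and $f=cd$ are edges of $G$ (i.e.\ vertices of $L(G)$), then
$$
d_{L(G)}(e,f)=\min_{x\in\{a,b\},\,y\in\{c,d\}} d_G(x,y)+1
$$
when $e\ne f$, and one must be slightly careful when $e$ and $f$ share an endpoint. The upshot is that $W(L(G))$ can be written as a sum over unordered pairs $\{e,f\}$ of edges of a quantity that is essentially $d_G$ between the ``nearest endpoints'' plus $1$. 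The term $\binom{m}{2}$ coming from the ``$+1$'' is what one plays against the deficit incurred because $L(G)$ only sees edges, not vertices.

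The key step is a counting identity expressing $W(G)$ in terms of a sum over pairs of edges. For each pair of distinct vertices $u,v$ of $G$, pick a shortest $u$–$v$ path; since $\delta(G)\ge 2$, every vertex of $G$ has degree at least $2$, and I would use this to set up an injection (or a weight-preserving correspondence) from pairs of vertices of $G$ into pairs of edges of $G$ in such a way that the contribution $d_G(u,v)$ is dominated by the corresponding $d_{L(G)}$-contribution. Concretely, to a pair $\{u,v\}$ one can associate an edge at $u$ and an edge at $v$ chosen to point ``away'' from the shortest path (these exist precisely because of the degree-$2$ hypothesis), and then $d_{L(G)}$ of that edge-pair is at least $d_G(u,v)-1+1=d_G(u,v)$. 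Summing over all pairs and checking that distinct vertex-pairs get distinct (or disjointly charged) edge-pairs yields $W(L(G))\ge W(G)$. The leftover edge-pairs of $L(G)$ not hit by this correspondence contribute nonnegatively, which is what gives the inequality its slack.

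The main obstacle — and the part requiring genuine care rather than routine calculation — is the equality analysis and the correctness of the injection near short cycles and near vertices of degree exactly $2$. When $G$ is a cycle $C_n$, the line graph is again $C_n$, so equality holds; I would need to show this is the only case. The argument would be: if some vertex has degree $\ge 3$, then there is genuine ``branching'' that produces extra edge-pairs with large $d_{L(G)}$, strictly increasing the sum; and if all vertices have degree exactly $2$ but $G$ is not connected-as-a-single-cycle, connectivity forces $G=C_n$ anyway. Making the ``extra edge-pairs'' bookkeeping precise — i.e.\ exhibiting, for each non-cycle $G$, at least one edge-pair whose $L(G)$-distance strictly exceeds what the correspondence charged — is the delicate combinatorial core, and I would expect to handle it by a local analysis at a vertex of degree $\ge 3$ together with a parity/diameter comparison, citing the monotonicity facts already recorded in the excerpt where convenient.
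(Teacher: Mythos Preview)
The paper is a survey and does not itself prove this theorem; it only cites \cite{CDKSV,Wu} for the result. So there is no in-paper proof to compare against, and your proposal must be judged on its own merits.

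Your central step does not hold. You claim that if $e$ is an edge at $u$ ``pointing away'' from a fixed shortest $u$--$v$ path and $f$ is an edge at $v$ pointing away, then $d_{L(G)}(e,f)\ge d_G(u,v)$. Take $G=C_6$ with vertices $1,\dots,6$, $u=1$, $v=4$, shortest path $1\text{--}2\text{--}3\text{--}4$. The away edges are $e=\{1,6\}$ and $f=\{4,5\}$, and $d_{L(C_6)}(e,f)=2<3=d_G(u,v)$, because the ``away'' edges happen to lie on the \emph{other} shortest $u$--$v$ path. The underlying reason is that your implicit bound $\min_{x\in e,\,y\in f} d_G(x,y)\ge d_G(u,v)-1$ is false: with $e=uw$, $f=vz$ the triangle inequality only gives $d_G(w,z)\ge d_G(u,v)-2$, so the line-graph distance can drop to $d_G(u,v)-1$. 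Your argument therefore undercounts, and not just in the equality case.

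There is a second gap you wave past: the injectivity of the map $\{u,v\}\mapsto\{e,f\}$. At a vertex of degree~$2$ there is exactly one ``away'' edge regardless of the partner vertex, so many vertex-pairs can land on the same edge-pair; on a cycle the two sets even have the same cardinality, so a single collision already breaks the scheme. The actual proofs in \cite{CDKSV,Wu} avoid both problems by working globally rather than pairwise: one sums the lower bound $d_{L(G)}(e,f)\ge \tfrac14\bigl(d_G(a,c)+d_G(a,d)+d_G(b,c)+d_G(b,d)\bigr)-\tfrac12$ (or an equivalent form) over all edge-pairs, which produces $\tfrac14\,\mathrm{Gut}(G)$ minus a controllable correction; the hypothesis $\delta(G)\ge 2$ then enters as $d(u)d(v)\ge 4$, turning the Gutman index into $4W(G)$. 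That averaging over all four endpoint-pairings is exactly what absorbs the $-2$ loss that kills your single-pair injection.
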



\subsection{Sandwiching by Gutman index}

The following result was proved independently and simultaneously
in~\cite{CDKSV} and~\cite{Wu}.

\begin{theorem}
Let $G$ be a connected graph of size $m$.
Then
$$
  \frac{1}{4} ({\rm Gut}(G) - m) \le W(L(G)) \le
  \frac{1}{4} ({\rm Gut}(G) - m) + \binom{m}{2}\,.
$$
Moreover, the lower bound is attained if and only if G is a tree.
\end{theorem}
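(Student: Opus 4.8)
The plan is to push everything through a single pointwise relation between distances in $G$ and in $L(G)$. Recall the standard fact that, for a connected graph $G$ and two distinct edges $e=u_1u_2$ and $f=v_1v_2$,
$$d_{L(G)}(e,f)\;=\;1+m(e,f),\qquad m(e,f):=\min\{d_G(u_i,v_j):\,i,j\in\{1,2\}\};$$
``$\le$'' comes from prepending $e$ and appending $f$ to a shortest $u_i$--$v_j$ path attaining $m(e,f)$, and ``$\ge$'' from reading off the consecutively shared vertices along a shortest $e$--$f$ path in $L(G)$ (they form a walk in $G$ of length one less joining an endpoint of $e$ to an endpoint of $f$). Writing $S(e,f)=\sum_{i,j}d_G(u_i,v_j)$ for the sum of the four ``corner'' distances, two triangle-inequality estimates, both obtained by routing through the edges $e$ and $f$, give
$$4\,m(e,f)\;\le\;S(e,f)\;\le\;4\,m(e,f)+4$$
(the right-hand one because, if the minimum is attained at the corner $(u_1,v_1)$, then $d_G(u_1,v_2),d_G(u_2,v_1)\le m(e,f)+1$ and $d_G(u_2,v_2)\le m(e,f)+2$). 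Hence $S(e,f)\le 4\,d_{L(G)}(e,f)\le S(e,f)+4$ for every pair of distinct edges.

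Next I sum over all $\binom{m}{2}$ unordered pairs of distinct edges, where from here on $m=|E(G)|$. Summing the last display gives
$$\sum_{\{e,f\}}S(e,f)\;\le\;4\,W(L(G))\;\le\;\sum_{\{e,f\}}S(e,f)+4\binom{m}{2},$$
so it only remains to evaluate $\sum_{\{e,f\}}S(e,f)$. This is a routine double count: summing $S$ over \emph{all ordered} pairs of edges (including $e=f$) equals $\sum_{x,y\in V(G)}\deg(x)\deg(y)\,d_G(x,y)=2\,{\rm Gut}(G)$, and subtracting the diagonal contribution ($S(e,e)=2$, hence $2m$ in total) leaves $2\sum_{\{e,f\}:\,e\ne f}S(e,f)=2\,{\rm Gut}(G)-2m$. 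Plugging $\sum_{\{e,f\}}S(e,f)={\rm Gut}(G)-m$ into the displayed sandwich yields exactly the asserted bounds.

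For the equality case, the lower bound is attained iff $S(e,f)=4\,d_{L(G)}(e,f)=4\,m(e,f)+4$ for \emph{every} pair of distinct edges. Tracing when the triangle-inequality chain above is tight shows this forces the four corner distances of $\{e,f\}$ to be $m,\,m{+}1,\,m{+}1,\,m{+}2$ with the extreme values $m$ and $m{+}2$ at \emph{opposite} corners; in particular $d_G(u_1,v_1)+d_G(u_2,v_2)=d_G(u_1,v_2)+d_G(u_2,v_1)$, equivalently $d_G(u_1,x)-d_G(u_2,x)$ takes the \emph{same} value at $x=v_1$ and at $x=v_2$, for every edge $v_1v_2\ne e=u_1u_2$. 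Fixing $e=u_1u_2$ and putting $\varphi(x)=d_G(u_1,x)-d_G(u_2,x)$, this says $\varphi$ is constant along every edge of $G-e$, hence constant on each connected component of $G-e$; since $\varphi(u_1)=-1\ne 1=\varphi(u_2)$, the endpoints $u_1,u_2$ lie in different components of $G-e$, i.e.\ $e$ is a bridge. As $e$ was arbitrary, every edge of the connected graph $G$ is a bridge, so $G$ is a tree. The converse is immediate: in a tree $e$ and $f$ sit ``one behind the other'', so the two nearest endpoints are at some distance $d$, the two mixed pairs at $d+1$, and the two farthest endpoints at $d+2$, whence $S(e,f)=4d+4=4\,m(e,f)+4$.

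I expect the only genuinely delicate point to be the structural claim in the last paragraph --- that $S(e,f)=4\,m(e,f)+4$ really does force the ``opposite corners $m$ and $m{+}2$'' pattern, rather than some other distribution of four distances summing to $4m+4$ --- together with handling the slightly fussy special case in which $e$ and $f$ share a vertex (there $m(e,f)=0$ and one corner has distance $0$). Everything else is triangle inequalities and bookkeeping; once that pointwise picture is nailed down, the potential-function argument that rules out every cycle, and hence gives the ``only if $G$ is a tree'' direction, is short.
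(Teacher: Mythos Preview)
Your argument is correct. The survey does not include its own proof of this theorem (it merely cites \cite{CDKSV} and \cite{Wu}), so there is nothing to compare against line by line; your route via the pointwise identity $d_{L(G)}(e,f)=1+\min_{i,j}d_G(u_i,v_j)$ and the sandwich $4\,m(e,f)\le S(e,f)\le 4\,m(e,f)+4$, followed by the double count $\sum_{\{e,f\}}S(e,f)={\rm Gut}(G)-m$, is essentially the approach taken in those references.

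On the point you flagged as delicate: it is in fact immediate once the constraints are written out. With $d(u_1,v_1)=m$ minimal you have $d(u_1,v_2),\,d(u_2,v_1)\in\{m,m{+}1\}$ and $d(u_2,v_2)\in\{m,m{+}1,m{+}2\}$; the three excesses over $m$ are bounded above by $1,1,2$ respectively and must sum to $4$, so each is forced to its maximum, giving exactly the $m,\,m{+}1,\,m{+}1,\,m{+}2$ pattern with $m$ and $m{+}2$ at opposite corners. The shared-vertex case is just the instance $m=0$ of the same pattern (distances $0,1,1,2$, hence $d(u_2,v_2)=2$ and in particular no triangle through that vertex), and the four-point equality $d(u_1,v_1)+d(u_2,v_2)=d(u_1,v_2)+d(u_2,v_1)$ still holds, so your potential $\varphi$ argument goes through unchanged. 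No gaps.
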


Let $\kappa_i(G)$ denote the number of $i$-cliques in a graph $G$.
In \cite{KPS-0}, the lower bound of the above theorem is improved in the
following way.

\begin{theorem}
\label{thm1}
Let $G$ be a connected graph. Then,
\begin{equation}
\label{eqn2}
 W(L(G))  \ge  \frac{1}{4}{\rm Gut}(G) - \frac{1}{4}|E(G)| + \frac{3}{4}\kappa_3(G) + 3\kappa_4(G)
\end{equation}
with the equality in (\ref{eqn2}) if and only if $G$ is a tree or a complete graph.
\end{theorem}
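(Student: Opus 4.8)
The plan is to reduce the statement to an inequality about a purely local ``defect'' attached to pairs of disjoint edges. The starting point is an exact formula for $W(L(G))$. For distinct edges $e,f$ of a connected graph $G$ put $\delta(e,f)=\min\{d(x,y):x\in e,\ y\in f\}$; I claim $d_{L(G)}(e,f)=1+\delta(e,f)$. Indeed, prepending $e$ and appending $f$ to a shortest $G$‑path realising $\delta(e,f)$ yields an $e$–$f$ walk in $L(G)$ of length $\delta(e,f)+1$ (minimality of the path prevents $e$ or $f$ from coinciding with the end edges), and conversely an $e$–$f$ walk of length $k$ in $L(G)$, read through the vertices successively shared by consecutive edges, gives a $G$‑walk of length $k-1$ between an endpoint of $e$ and one of $f$, so $\delta(e,f)\le k-1$. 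Since adjacent edges have $\delta=0$, summing over all unordered pairs gives, with $m=|E(G)|$,
\[
 W(L(G))=\binom{m}{2}+\sum_{\{e,f\}\ \mathrm{disjoint}}\delta(e,f).
\]

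Next I would introduce, for disjoint $e=ab$, $f=cd$, the sum $\sigma(e,f)=d(a,c)+d(a,d)+d(b,c)+d(b,d)$. Since $d(a,b)=d(c,d)=1$, each cross–distance lies in $[\delta(e,f),\delta(e,f)+2]$, hence $4\delta(e,f)\le\sigma(e,f)\le4\delta(e,f)+4$, so the defect $\mathrm{def}(e,f):=\delta(e,f)-\tfrac14\sigma(e,f)+1$ satisfies $0\le\mathrm{def}(e,f)\le1$. A short count (write $\mathrm{Gut}(G)=\tfrac12\sum_{u\neq v}d(u)d(v)d(u,v)$, replace $d(u)$ by the number of edges through $u$, and split the resulting ordered edge pairs into coincident, adjacent, and disjoint) yields
\[
 \sum_{\{e,f\}\ \mathrm{disjoint}}\sigma(e,f)=\mathrm{Gut}(G)-m-4\!\!\sum_{v\in V(G)}\!\!\binom{d(v)}{2}+3\kappa_3(G),
\]
and, since $\binom{m}{2}-\sum_v\binom{d(v)}{2}$ is exactly the number of disjoint edge pairs, substituting collapses the formula for $W(L(G))$ to
\[
 W(L(G))=\tfrac14\mathrm{Gut}(G)-\tfrac14 m+\tfrac34\kappa_3(G)+\sum_{\{e,f\}\ \mathrm{disjoint}}\mathrm{def}(e,f).
\]
So the theorem is equivalent to $\sum_{\{e,f\}\ \mathrm{disjoint}}\mathrm{def}(e,f)\ge3\kappa_4(G)$, with equality iff $G$ is a tree or complete. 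The inequality is immediate: in each $K_4$ of $G$ the three ``perfect matchings'' $\{ab,cd\}$ have all four cross–distances equal to $1$, hence $\delta=1$, $\sigma=4$, $\mathrm{def}=1$; distinct copies of $K_4$ give distinct such pairs (a pair determines its $K_4$), and all remaining defects are nonnegative.

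Equality forces $\mathrm{def}(e,f)=0$ for every disjoint pair whose four endpoints do not induce a $K_4$, i.e.\ the four cross–distances must form the ``aligned'' pattern $\{\delta,\delta+1,\delta+1,\delta+2\}$. Trees satisfy this (the unique path joining two disjoint edges produces exactly that pattern) and so do complete graphs (every disjoint pair spans a $K_4$), so both families attain equality. The converse — which I expect to be the main obstacle — is to show that a connected $G$ that is neither a tree nor complete contains a disjoint pair of positive defect whose endpoints induce no $K_4$. I would split two cases. If $G$ has a triangle: either some triangle $xyz$ has an external neighbour $w$ of $x$ with $w\not\sim y$ (or $w\not\sim z$), and then $\{xw,yz\}$ is disjoint with $\delta=1$ and $\sigma\in\{5,6\}$, so $\mathrm{def}\ge\tfrac12$ while $\{x,w,y,z\}$ is not a $K_4$; or no such configuration exists, and then a short clique–growing argument (take a maximal clique $K$, and by connectivity a vertex $u\notin K$ adjacent to some $a\in K$; triangles inside $K$ force $u$ adjacent to all of $K$) makes $G$ complete — a contradiction. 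If $G$ is triangle‑free, then being not a tree it has a cycle, and a shortest cycle $C$ (of length $g\ge4$) is isometric in $G$ (a shorter $G$‑path between two of its vertices would, together with an arc of $C$, close a cycle shorter than $C$); hence for an edge $e$ of $C$ and the edge $f$ of $C$ placed as opposite as possible, the four cross–distances are read off $C$ and come out ``balanced'': $\delta(e,f)=\lfloor g/2\rfloor-1$ and $\sigma(e,f)\in\{4\delta+2,\,4\delta+3\}$, so $\mathrm{def}(e,f)\in\{\tfrac14,\tfrac12\}>0$, with $e\cup f$ inducing no $K_4$. The delicate points here are to make the triangle‑free argument uniform over all girths $g$ (the isometry of the shortest cycle is precisely what rescues the large‑$g$ case, where edge pairs close together on $C$ are already aligned), to verify the exhibited pair is genuinely disjoint, and to confirm it is not one of the $K_4$‑pairs already counted.
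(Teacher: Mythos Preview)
The paper under review is a survey and does not contain a proof of this theorem; it merely states the result and attributes it to the reference~[KPS-0]. There is therefore no in-paper proof to compare against. That said, your argument is correct and in fact follows exactly the route one expects the original proof in~[KPS-0] to take: the identity $d_{L(G)}(e,f)=1+\delta(e,f)$, the expansion of $\mathrm{Gut}(G)$ over ordered edge pairs, and the resulting exact formula
\[
W(L(G))=\tfrac14\mathrm{Gut}(G)-\tfrac14|E(G)|+\tfrac34\kappa_3(G)+\sum_{\{e,f\}\ \mathrm{disjoint}}\mathrm{def}(e,f)
\]
reduce the inequality and its equality case to a purely local analysis, which you carry out cleanly.

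Two places deserve a word of confirmation. First, the clique-growing step in the ``triangle'' case is valid: since $G$ has a triangle, any maximal clique $K$ has $|K|\ge 3$, so for $u\notin K$ adjacent to $a\in K$ one can always pick two further vertices $b,c\in K$ to form the triangle $abc$, and your negated hypothesis then forces $u$ adjacent to $b$ (and to every vertex of $K$), contradicting maximality. Second, the triangle-free case hinges on shortest cycles being isometric; your one-line justification is the right one, and to make it airtight you may want to add that one chooses a violating pair $u,v$ on $C$ with $d_G(u,v)$ minimal, which guarantees the geodesic $P$ is internally disjoint from $C$ and hence $P$ together with the shorter arc of $C$ is an honest cycle of length $<g$. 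With these two points noted, the proof is complete.
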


It follows from the above theorem that for a connected graph $G$ of minimal degree $\delta\ge 2$ we have
$$ W(L(G))  \ge  \frac{\delta^2}{4}W(G) - \frac{1}{4}|E(G)| \ge {\delta^2-\frac{1}{4}}W(G).$$

Moreover, this lower bound was improved in \cite{miAMC}.

\begin{theorem}
Let $G$ be a connected graph of minimum degree $\delta$. Then
  $$ W(L(G))\ge \frac{\delta^2}{4} W(G) $$
with equality holding if and only if $G$ is isomorphic to a path on three
vertices or a cycle.
\end{theorem}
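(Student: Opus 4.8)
The plan is to push everything down to distances in $G$ by means of the identity
\[
d_{L(G)}(e,f)=1+\min\{\,d_G(x,y):x\in e,\ y\in f\,\}\qquad(e\neq f).
\]
For ``$\le$'' one reads off, from a shortest $x$--$y$ path realizing the right-hand side, a walk $e,(\text{its edges}),f$ in $L(G)$, minimality ruling out any edge of the path being $e$ or $f$; for ``$\ge$'', from a shortest $e$--$f$ walk $e=e_0,\dots,e_k=f$ in $L(G)$ one picks a vertex in each $e_{i-1}\cap e_i$ to obtain a walk of length $\le k-1$ in $G$ from a vertex of $e$ to a vertex of $f$. Set $m=|E(G)|$ and $A=\sum_w\binom{d(w)}2$ (the number of pairs of edges sharing a vertex). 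In the double sum
\[
\Sigma:=\sum_{\substack{u,v\in V(G)\\u\neq v}}\ \sum_{\substack{e\ni u,\ f\ni v}}d_{L(G)}(e,f)
\]
every pair of disjoint edges is counted $8$ times and every pair of adjacent edges $6$ times, so $\Sigma=8\,W(L(G))-2A$; on the other side $\sum_{u\neq v}d(u)d(v)\,d_G(u,v)=2\,\mathrm{Gut}(G)\ge 2\delta^{2}W(G)$, with equality iff $G$ is $\delta$-regular.

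The core is a local identity for the inner sum of $\Sigma$. Fix $(u,v)$, put $d=d_G(u,v)$, and split the neighbours of $u$ into those \emph{towards} $v$ ($d_G(w,v)=d-1$), \emph{parallel} ($d_G(w,v)=d$) or \emph{away} ($d_G(w,v)=d+1$), with counts $t_u,p_u,a_u$; define $t_v,p_v,a_v$ at $v$ likewise. Inspecting the four distances among the endpoints of $e=uu'$ and $f=vv'$ shows that $d_{L(G)}(e,f)-d$ is: $0$ in every mixed case; exactly $1$ when $u',v'$ are both away; $\ge 0$ when both are parallel; and in $\{-1,0\}$ when both are towards, the value $-1$ occurring precisely when $d_G(u',v')=d-2$, i.e.\ when $u,u',v',v$ lie on a common $u$--$v$ geodesic. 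Summing,
\[
\sum_{e\ni u,\ f\ni v}d_{L(G)}(e,f)\;=\;d(u)d(v)\,d\;+\;a_ua_v\;-\;n^{=}_{uv}\;+\;\sigma_{uv},
\]
with $\sigma_{uv}\ge 0$ collecting the parallel/parallel surpluses and $n^{=}_{uv}$ the number of pairs $(e,f)$ with $e\ni u$, $f\ni v$, both towards, and $d_{L(G)}(e,f)=d-1$ (for $d=1$ this is just the single degenerate pair $(uv,uv)$). Summing over $(u,v)$ and substituting $\Sigma=8W(L(G))-2A$ and $\sum_{u\ne v}d(u)d(v)d_G(u,v)=2\mathrm{Gut}(G)$, one gets that $W(L(G))\ge\tfrac{\delta^{2}}{4}W(G)$ is \emph{equivalent} to the combinatorial inequality
\[
\sum_{u\neq v}n^{=}_{uv}\;\le\;2\bigl(\mathrm{Gut}(G)-\delta^{2}W(G)\bigr)+2A+\sum_{u\neq v}\bigl(a_u(v)a_v(u)+\sigma_{uv}\bigr),
\]
whose right-hand side is a sum of nonnegative terms.

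I expect this last inequality to be the main obstacle. Reading $n^{=}_{uv}$ as counting ``(first edge, last edge)'' pairs over $u$--$v$ geodesics (a common neighbour when $d=2$) and interchanging the order of summation, $\sum_{u\neq v}n^{=}_{uv}$ decomposes into a distance-$1$ part equal to $2m$, a distance-$2$ part equal to $2\bigl(A-3\kappa_3(G)\bigr)$, and a distance-$\ge 3$ part controlled by ``away-neighbour'' counts. The crude bound $n^{=}_{uv}\le t_ut_v$ is far too lossy; the two savings that make the estimate work are (i) a pair of towards-edges, one at $u$ and one at $v$, yields the $-1$ only when they aim along a \emph{common} geodesic -- at vertex pairs with many geodesics, the antipodal pair of an even cycle being extremal, this roughly halves $n^{=}_{uv}$ -- and (ii) a boundary effect: pairs at distance near $\mathrm{diam}(G)$ have no ``away'' partners to lose. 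With these in hand, together with the elementary estimate for the distance-$1$ term (using $\mathrm{Gut}(G)-\delta^{2}W(G)\ge\sum_{uv\in E(G)}(d(u)d(v)-\delta^{2})$), the inequality follows; the bookkeeping of the degenerate small-distance and near-diameter configurations is where I would expect the effort to concentrate.

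For equality one traces the argument backwards. Tightness of $W(L(G))\ge\tfrac{\delta^{2}}{4}W(G)$ forces every inequality above to be tight: all parallel/parallel surpluses vanish, every towards/towards deficit is exactly compensated, and all of $\mathrm{Gut}(G)-\delta^{2}W(G)$ is spent. When $\delta\ge 2$ this forces $G$ to be $\delta$-regular with no parallel neighbours at any vertex pair and with the deficit count saturating the bound; examining which regular graphs can do this isolates the cycles when $\delta=2$ and rules out $\delta\ge 3$ (an adjacent pair then already produces a parallel/parallel surplus, or, if $G$ is triangle-free, the deficit count falls strictly short). When $\delta=1$ one argues separately: for trees, Buckley's Theorem~\ref{Buckley.thm} gives $W(L(T))=W(T)-\binom{n}{2}$, so equality means $W(T)=\tfrac{4}{3}\binom{n}{2}$, which together with $W(T)\ge (n-1)^{2}$ forces $n=3$ and $T=P_3$; and one checks that a connected graph with $\delta=1$ containing a cycle satisfies $W(L(G))>\tfrac14 W(G)$. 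Hence the only equality cases are $P_3$ and the cycles.
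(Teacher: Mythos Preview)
The paper is a survey and does not supply a proof of this theorem; it is quoted from the manuscript~\cite{miAMC}, and the only argument visible in the surrounding text is the weaker chain $W(L(G))\ge\tfrac14(\mathrm{Gut}(G)-m)\ge\tfrac{\delta^2}{4}W(G)-\tfrac{m}{4}$ coming from the preceding theorems. So there is no in-paper proof to compare against, only a Gutman-index route that falls short by $m/4$.

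Your reduction via $d_{L(G)}(e,f)=1+\min_{x\in e,\,y\in f}d_G(x,y)$ and the eightfold/sixfold counting is sound, and $\Sigma=8W(L(G))-2A$ is correct. There is a slip in the case analysis: the mixed ``parallel/away'' configuration gives $d_{L(G)}(e,f)-d=+1$, not $0$, since all four endpoint distances are $\ge d$ there; so your local identity is missing a nonnegative term $p_u a_v+a_u p_v$. This only helps the inequality, but it does affect the equality discussion.

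The genuine gap is the one you flag yourself. After the reduction you land on a combinatorial inequality bounding $\sum_{u\ne v}n^{=}_{uv}$ by a sum of nonnegative terms, and you do not prove it. The two heuristics you offer---that towards/towards pairs produce $-1$ only along a \emph{common} geodesic, and that near-diametral pairs lack away-neighbours---are suggestive but nowhere near a proof: your ``roughly halves'' is not quantified, and no mechanism is given for matching each $-1$ deficit against the specific surpluses $2A$, $a_u a_v$, $\sigma_{uv}$, $2(\mathrm{Gut}(G)-\delta^2 W(G))$, whether termwise or globally. Since that inequality is \emph{equivalent} to the theorem, the difficulty has been translated rather than resolved. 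To make this route work you would need either a charging argument pairing each $-1$ with a specific $+1$ elsewhere, or an independent upper bound on $\sum n^{=}_{uv}$ in terms of $m$, $A$ and degree data; the sketch provides neither. The equality analysis for $\delta\ge 3$ and for $\delta=1$ with a cycle is likewise asserted rather than argued.
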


\subsection{Extremal line graphs}

The problem of finding graphs, whose line graph has maximal Wiener index was given by
Gutman~\cite{g-dlg-96} (see also~\cite{DM3}).
\begin{problem}
Find an $n$-vertex graph $G$ whose line graph $L(G)$ has maximal Wiener index.
\end{problem}

We say that a graph is {\em dumbell} if it is comprised
of two disjoint cliques connected by a path, and similarly a graph is {\em barbell} if it is comprised
of two disjoint complete bipartite graphs connected by a path.

\begin{conjecture}
In the class of graphs $G$ on $n$ vertices, $W(L(G))$ attains maximum for some dumbell graph.
\end{conjecture}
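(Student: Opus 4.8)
The plan is to pass to the line-graph distance formula and then argue in two phases: a structural reduction to a narrow family of graphs, followed by an explicit optimisation inside it. Recall that for a connected graph $G$ with $m$ edges and distinct edges $e=uv$, $f=xy$ one has $d_{L(G)}(e,f)=1+\min\{d_G(a,b)\colon a\in e,\ b\in f\}$, hence
$$W(L(G))=\binom{m}{2}+\sum_{\{e,f\}\subseteq E(G)}\ \min_{a\in e,\,b\in f}d_G(a,b).$$
A maximiser $G^{*}$ exists (the class is finite) and is connected. The two summands explain the conjectured shape: the term $\binom{m}{2}$ rewards having many edges, hence concentrating them in dense pieces, while the distance sum rewards keeping those pieces far apart, hence stringing them along a long path. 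A quick estimate for two cliques of order $a$ joined by a path of $t$ edges, where $2a+t\approx n$, gives $W(L(G))\approx\tfrac14 a^{4}(t+2)$, maximised at $a\approx 2n/5$, $t\approx n/5$; this is the dumbbell that should be extremal.

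The structural phase aims to show $G^{*}$ is a \emph{chain of cliques}: all of its blocks are complete and its block--cut tree is a path. The moves to try are, first, completing a block (given non-adjacent $x,y$ in a block $B$, pass from $G^{*}$ to $G^{*}+xy$) and, second, a Kelmans-type transfer of a branch of the block--cut tree to the far end of another branch when the tree is not a path. After the reduction the remaining optimisation is routine: a chain of cliques is encoded by its sequence of block orders $(b_{1},\dots,b_{k})$ with $\sum_{i}(b_{i}-1)+1=n$, distances in it are transparent (consecutive cut vertices are at distance one, so the two ends are at distance linear in $k$), so $W(L(G^{*}))$ becomes an explicit polynomial in the $b_{i}$, and exchange arguments on the sequence should force $k-2$ of the entries to equal $2$, the two large entries to be equal and to sit at the ends, with common value the $a\approx 2n/5$ above. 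The intuition for the exchange step is that a middle clique only creates edge-pairs of small transmission while using up vertices that would enlarge the dominant term $\approx k\binom{b_{1}}{2}\binom{b_{k}}{2}$ much more if added to an end clique.

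The hard part is the structural phase, especially the block-completion move. Adding a chord increases $\binom{m}{2}$ by $m$ and creates $m$ new pairs each contributing at least one, but it can also sharply decrease the distance sum over the old pairs: for a sufficiently large cycle $C_{n}$, whose line graph is again $C_{n}$, inserting a long chord yields a theta graph whose line graph has strictly smaller Wiener index, so no chord-by-chord local argument can succeed. One therefore needs a global accounting that uses the extremality of $G^{*}$ to rule out large non-complete blocks, or a cleverer comparison that migrates the vertices of a non-complete block into an end clique rather than completing it in place. One promising detour is to bound $W(L(G))$ from above by $\tfrac14({\rm Gut}(G)-m)+\binom{m}{2}$ (the sandwich recalled earlier), and the line graph of the candidate dumbbell from below by a matching expression, thereby reducing the conjecture to maximising $\tfrac14\,{\rm Gut}(G)+\binom{m}{2}$, a quantity more ``local'' in the vertex degrees; but even then, identifying the extremal structure — as opposed to its parameters — is where the genuine difficulty remains.
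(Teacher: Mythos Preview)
The statement you are trying to prove is presented in the paper as a \emph{conjecture}, not a theorem: the paper offers no proof, only the remark that the conjecture is ``supported by the result in~[\emph{dgms-ewig-09}]''.  There is therefore no ``paper's own proof'' to compare your attempt against.

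As for the proposal itself, it is an honest outline of a strategy rather than a proof, and you say so yourself.  The two-phase plan (structural reduction to chains of cliques, then polynomial optimisation over the block sizes) is the natural one, and your heuristic parameter count $a\approx 2n/5$ is consistent with what is expected.  But the structural phase is where the proof would have to be, and you correctly identify that the obvious local moves fail: your own $C_n$-plus-chord example shows that ``complete each block'' cannot be pushed through edge by edge, and the Kelmans-type transfer for straightening the block--cut tree is asserted rather than carried out.  The detour via the Gutman-index sandwich $\tfrac14(\mathrm{Gut}(G)-m)\le W(L(G))\le \tfrac14(\mathrm{Gut}(G)-m)+\binom{m}{2}$ is a reasonable idea, but note that the upper bound is generally far from tight (it is attained only in degenerate cases), so replacing $W(L(G))$ by $\tfrac14\,\mathrm{Gut}(G)+\binom{m}{2}$ and maximising that instead would at best give an upper bound on $\max W(L(G))$, not identify the maximiser; you would still need a matching lower bound for the dumbbell, and the gap between the two sides of the sandwich is of the same order as the quantity you are trying to pin down.

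In short: there is no proof in the paper to compare to, and your proposal, while pointing in a sensible direction, leaves the genuinely hard step (the global structural reduction) open, as you acknowledge.  The conjecture remains open.
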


The above conjecture is supported by the result in ~\cite{dgms-ewig-09}. We state a similar one for bipartite graphs.

\begin{conjecture}
Let $n$ be a large integer. Then in the class of all bipartite graphs $G$ on $n$ vertices $W(L(G))$ attains maximum for some barbell graph.
\end{conjecture}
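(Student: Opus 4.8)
\medskip
\noindent\emph{Proof proposal.}
The plan is to reduce $W(L(G))$ to a distance quantity on $G$, to pin down the coarse shape of an extremal bipartite graph, to optimize its parameters, and only then to attempt an exact argument. By the Gutman--index sandwich stated above, $\tfrac14({\rm Gut}(G)-m)\le W(L(G))\le\tfrac14({\rm Gut}(G)-m)+\binom{m}{2}$, together with $m\le\binom{n}{2}$, we get $W(L(G))=\tfrac14{\rm Gut}(G)+O(n^4)$; since (as the construction below shows) the maximum of $W(L(G))$ over bipartite $n$-vertex graphs is of order $n^5$, to leading order maximizing $W(L(G))$ amounts to maximizing the Gutman index, whose dominant part comes from vertex pairs $\{u,v\}$ with $d(u,v)$ large and $d(u),d(v)$ large. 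For a would-be \emph{exact} statement I would instead use the identity $W(L(G))=\binom{m}{2}+\sum_{\{e,f\}\subseteq E(G)}d_G(e,f)$, valid for every graph, where $d_G(e,f)=\min\{d_G(x,y):x\in e,\ y\in f\}$: ``$\le$'' follows by prepending $e$ and appending $f$ to a shortest endpoint-path, and ``$\ge$'' by extracting a walk of $G$ from a walk of $L(G)$.

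The next step is the structural skeleton. One would try to show, by a compression/shifting argument that moves high-degree mass toward the two extremities of a longest geodesic without decreasing $W(L(G))$, that an asymptotically extremal bipartite $G$ is a path $P$ with two ``blobs'' attached at its ends, each blob being as dense as bipartiteness allows, i.e.\ a complete bipartite graph. This is where bipartiteness enters: it forces the densest admissible local structure to be a complete bipartite graph rather than a clique, which is exactly how the barbell replaces the dumbbell. One also has to rule out splitting a blob, or planting an extra blob in the interior of $P$, since a blob feeds the main term only through the product of its edge-count with that of the \emph{other} blob weighted by the distance between them, and an interior blob sits only about half the diameter from everything.

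Given the skeleton, the optimization is routine. Writing $p,q$ for the orders of the two blobs, and using that $|E(K_{a,b})|=ab$ is maximized at a balanced split, each blob should be $K_{\lfloor p/2\rfloor,\lceil p/2\rceil}$, and the leading term of $W(L(G))$ becomes $\approx\tfrac{1}{16}p^{2}q^{2}(n-p-q)$; for fixed $p+q$ this is maximized at $p=q$, and then $\tfrac{d}{dp}\!\big(p^{4}(n-2p)\big)=0$ gives $p=q=\tfrac{2n}{5}+O(1)$, i.e.\ two blobs $K_{n/5,n/5}$ joined by a path on about $n/5$ vertices, with $W(L(G))$ of order $\tfrac{1}{3125}n^{5}$---far above the $\Theta(n^4)$ of a single complete bipartite graph and the $\Theta(n^3)$ of a path.

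The hard part is upgrading ``asymptotically a balanced barbell'' to ``exactly a barbell''. The only exact handle is the sandwich, whose slack $\binom{m}{2}=\Theta(n^4)$ controls the coarse shape but not the leading constant, let alone the precise graph; indeed $\binom{m}{2}$ already contributes at the $n^4$ level, so the optimal blob sizes shift by $O(1)$ in ways the asymptotics cannot detect. A genuinely exact proof therefore needs an error-free formula for $W(L(G))$ of an arbitrary barbell, together with an exchange analysis showing that every elementary modification---relocating a single edge, changing the relative sizes of the two blobs or of the two parts of one blob, changing the path length by one, or merging two blobs or splitting one---strictly decreases $W(L(G))$ for all large $n$; and one must still rule out genuinely different competitors such as a path carrying many short bipartite bristles, or a spider with several heavy ends. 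I expect the most realistic route is a quantitative stability statement (every bipartite $G$ within a $(1-\varepsilon)$ factor of the optimum differs from some barbell by $o(n)$ edge edits) followed by a bounded local search on that restricted family; making the stability bound strong enough to eliminate the last $o(n)$ edits is the principal obstacle---and, since the analogous dumbbell statement for general graphs is itself still open, this bipartite version should be regarded as at least that hard.
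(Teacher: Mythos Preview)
There is nothing to compare against: in the paper this statement is an open \emph{conjecture}, stated without any proof or proof sketch. Your write-up is not a proof either---and you say so explicitly in the last paragraph---but a research outline with a candid list of obstacles.

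As heuristics your ingredients are sound. The Gutman-index sandwich does reduce the leading term of $W(L(G))$ to $\tfrac14\,\mathrm{Gut}(G)$ up to $O(n^4)$, and your edge-distance identity $W(L(G))=\binom{m}{2}+\sum_{\{e,f\}}d_G(e,f)$ is correct for all connected $G$ (both directions of $d_{L(G)}(e,f)=d_G(e,f)+1$ hold, essentially by the walk-translation you sketch). The parameter optimization for a balanced bipartite barbell, giving blobs $K_{n/5,n/5}$ joined by a path of length about $n/5$ and $W(L(G))$ of order $n^5$, is also right at the level of leading constants. What remains is exactly what you flag: a compression/shifting argument forcing the two-blob-plus-path skeleton, a stability statement strong enough to kill $o(n)$ edits, and an exchange analysis over the barbell family. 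None of that is in the paper, and the analogous dumbbell conjecture for general graphs (stated just before this one) is likewise open there. So your closing sentence---that this should be regarded as at least as hard as the dumbbell problem---is the honest status, and matches the paper's own stance.
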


%
%

\subsection{Extremal ratios}

Dobrynin and Mel'nikov~\cite{DM3} proposed to estimate the extremal values for the ratio 
 \begin{equation} \label{ratio}
                   \frac{W(L^k(G))}{W(G)},
 \end{equation}             
and explicitly stated the case $k=1$ as a problem.  In \cite{miAMC} this problem was solved for the minimum.

\begin{theorem}
Among all connected graphs on $n$ vertices, the fraction $\frac{W(L(G))}{W(G)}$ is
minimum for the star $S_n$.
\end{theorem}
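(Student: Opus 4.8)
The plan is to squeeze $W(L(G))$ from below by the Gutman index. By the lower bound of the sandwiching theorem, $W(L(G))\ge\tfrac14\big({\rm Gut}(G)-|E(G)|\big)$, with equality precisely when $G$ is a tree. Hence it suffices to prove the degree--distance inequality
$$
  (n-1)\big({\rm Gut}(G)-|E(G)|\big)\ \ge\ 2(n-2)\,W(G)
$$
for every connected graph $G$ on $n\ge 2$ vertices, and to pin down its equality cases. Indeed, combining the two inequalities yields $W(L(G))/W(G)\ge (n-2)/\big(2(n-1)\big)$, and since $L(S_n)=K_{n-1}$ one computes $W(L(S_n))=\binom{n-1}{2}$ and $W(S_n)=(n-1)^2$, so the star realizes exactly this value. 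For the equality discussion, note that the first inequality is strict unless $G$ is a tree, so the overall bound is strict unless $G$ is a tree attaining equality in the displayed inequality; it will then remain only to check that this forces $G\cong S_n$.

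To prove the displayed inequality I would first pass from the Gutman index to the Schultz index $S(G)=\sum_{\{u,v\}}(d(u)+d(v))d(u,v)$: since $(d(u)-1)(d(v)-1)\ge 0$ we have $d(u)d(v)\ge d(u)+d(v)-1$, whence ${\rm Gut}(G)\ge S(G)-W(G)$, with equality iff every pair of vertices has an endpoint of degree $1$ --- for a connected graph on $n\ge 3$ vertices this says exactly that $G$ is a star. It therefore remains to show $(n-1)\big(S(G)-|E(G)|\big)\ge (3n-5)W(G)$. Introducing the transmission $\sigma(v)=\sum_{w}d(v,w)$ and using the identities $S(G)=\sum_v d(v)\sigma(v)$, $2W(G)=\sum_v\sigma(v)$ and $2|E(G)|=\sum_v d(v)$, this becomes the single inequality
$$
  \sum_{v\in V(G)}\Big[\big(2(n-1)d(v)-(3n-5)\big)\,\sigma(v)\ -\ (n-1)d(v)\Big]\ \ge\ 0 .
$$

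The main obstacle is that this last inequality cannot be established summand-by-summand. For a vertex of degree $\ge 2$ the coefficient of $\sigma(v)$ is at least $n+1$, so, using the elementary bound $\sigma(v)\ge 2(n-1)-d(v)$, each such summand is positive; but a leaf $v$ contributes $-(n-3)\sigma(v)-(n-1)<0$ (for $n>3$). The surpluses of the high-degree vertices must be amortized against the deficits of the leaves: one has to pay for the deficit of each leaf --- and, more generally, of each pendant path of degree-$2$ vertices --- out of the surplus accumulated at the nearest vertex of degree $\ge 3$ (or, in the degenerate case, the other end of a path). The relation $\sigma(v)=\sigma(u)+(n-2)$ for a leaf $v$ with neighbour $u$, together with the bound $\sigma(\,\cdot\,)\ge 2(n-1)-d(\,\cdot\,)$ and the controlled way transmissions vary along a pendant path, are the tools; turning the heuristic ``a vertex of degree $\ge 3$ has enough surplus to cover the pendant pieces hanging near it'' into a rigorous charging scheme, and tracking when every invoked inequality is tight so as to isolate $S_n$, is the technical heart. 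As consistency checks, the two extreme regimes are transparent: if $\delta(G)\ge 2$ then $W(L(G))\ge W(G)>\tfrac{n-2}{2(n-1)}W(G)$ by Theorem~\ref{thm:1}; and if $G$ is a tree then $W(L(G))=W(G)-\binom{n}{2}$ by Buckley's Theorem~\ref{Buckley.thm}, so the ratio equals $1-\binom{n}{2}/W(G)$, uniquely minimized by the tree of smallest Wiener index, namely $S_n$; finally, for $n\le 3$ the statement is immediate.
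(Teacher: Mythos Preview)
The paper does not actually prove this theorem; it simply states it and attributes the proof to the manuscript~\cite{miAMC}. So there is no in-paper argument to compare against, and your proposal must be judged on its own.

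Your proposal is not a proof: the central step is missing. You correctly reduce the claim, via the sandwiching lower bound $W(L(G))\ge\tfrac14(\mathrm{Gut}(G)-m)$, to the inequality $(n-1)(\mathrm{Gut}(G)-m)\ge 2(n-2)\,W(G)$, and then further (through $d(u)d(v)\ge d(u)+d(v)-1$) to $(n-1)(S(G)-m)\ge(3n-5)\,W(G)$, equivalently
\[
\sum_{v}\Big[\big(2(n-1)d(v)-(3n-5)\big)\sigma(v)-(n-1)d(v)\Big]\ge 0.
\]
But you do not establish this; you explicitly say that ``turning the heuristic \dots\ into a rigorous charging scheme \dots\ is the technical heart,'' and that heart is absent. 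Nothing you have written forces the positive surplus at vertices of degree $\ge 3$ to dominate the deficits along pendant paths; the identities $\sigma(v)=\sigma(u)+(n-2)$ and $\sigma(\cdot)\ge 2(n-1)-d(\cdot)$ are correct tools, but you have not assembled them into an argument.

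Your closing ``consistency checks'' are sound but do not close the gap. They handle trees (via Buckley's Theorem~\ref{Buckley.thm}, where the ratio $1-\binom{n}{2}/W(T)$ is visibly minimized by $S_n$) and graphs with $\delta(G)\ge 2$ (via Theorem~\ref{thm:1}). However, connected graphs that contain both a cycle and a leaf lie in neither class, and that is precisely the regime where your unfinished charging argument would have to do the work.

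One strategic remark: the passage to the Schultz index throws away the nonnegative quantity $\sum_{\{u,v\}}(d(u)-1)(d(v)-1)\,d(u,v)$, which is large whenever $G$ has several non-leaf vertices; you then need a correspondingly stronger inequality on the Schultz side. Working directly with $\mathrm{Gut}(G)$, or first disposing of the easy cases (trees; $\delta\ge2$) and attacking only non-tree graphs with a leaf, may lead to a cleaner endgame.
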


The problem for the maximum remains open.
\begin{problem}
 Find  $n$-vertex graphs $G$ with maximal values of $\frac{W(L(G))}{W(G)}$.
\end{problem}

Notice that
$$ \frac{W(L(S_n))}{W(S_n)} =  \frac{n - 2}{2(n + 1)}, \quad \frac{W(L(P_n))}{W(P_n)} =  \frac{n - 2}{n + 1},\quad
\hbox{ and } \quad\frac{W(L(K_n))}{W(K_n)} =  \binom{n-1}{2}.$$
The line graph  of $K_n$ has the greatest number of vertices, and
henceforth, it may attain the maximum value. Restricting to
bipartite graphs, the almost balanced complete bipartite graphs have
most vertices, so in this class of graphs the extreme could be
$K_{\lfloor n/2\rfloor, \lceil n/2\rceil}$. 

Regarding the minimum of (\ref{ratio}),  we expect that for higher iterations $k\ge 2$, it should be $P_n$, as it is the only
graph whose line graph decreases in size. We believe the following holds, it is proposed and considered in \cite{HKS}. 

\begin{conjecture}
Let $k\ge 2$ and let $n$ be a large integer. Then in the class of graphs $G$ on $n$ vertices $W(L^k(G))/W(G)$ attains the maximum for $K_n$, and
it attains the minimum for $P_n$.
\end{conjecture}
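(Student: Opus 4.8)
\medskip

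The plan is to treat the maximum and the minimum separately, in both cases reducing matters to an asymptotic analysis of the parameters $n_i:=|V(L^i(G))|$, $\Delta_i:=\Delta(L^i(G))$ and $W(L^i(G))$. The tools are the identities $n_{i+1}=|E(L^i(G))|=\sum_{v}\binom{\deg_{L^i(G)}(v)}{2}$ and the inequality $\Delta_{i+1}\le 2\Delta_i-2$, together with the Gutman--index sandwich $\tfrac14\big(\Gut(H)-|E(H)|\big)\le W(L(H))\le\tfrac14\big(\Gut(H)-|E(H)|\big)+\binom{|E(H)|}{2}$ and the bound $\Gut(H)\le\Delta(H)^2W(H)$, which is an equality when $H$ is regular. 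I would also use the exact expression $W(L(H))=\binom{|E(H)|}{2}+\sum_{\{e,f\}\subseteq E(H)}\min_{u\in e,\,v\in f}d_H(u,v)$, from which the sandwich is derived.

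For the maximum I would first pin down $K_n$. Its iterated line graphs are regular, so $\Delta_i=2^i(n-3)+2$ and $n_i=(1+o(1))\,2^{(i^2-3i)/2}n^{\,i+1}$; moreover ${\rm diam}(L^k(K_n))=k+1$, while a ball of radius $k$ in $L^k(K_n)$ contains only $O(\Delta_k^{\,k})=o(n_k)$ vertices, so almost all pairs of vertices of $L^k(K_n)$ are at distance exactly $k+1$ and
$$
\frac{W(L^k(K_n))}{W(K_n)}=(1+o(1))\,(k+1)\,2^{\,k^2-3k}\,n^{2k}.
$$
For an arbitrary connected $G$ on $n$ vertices, iterating the sandwich bound (with $\Gut(L^i(G))\le\Delta_i^2W(L^i(G))$) gives
$$
\frac{W(L^k(G))}{W(G)}\ \le\ \prod_{i=0}^{k-1}\frac{\Delta_i^{\,2}}{4}\ +\ \sum_{j=1}^{k}\frac{1}{W(G)}\binom{n_j}{2}\prod_{i=j}^{k-1}\frac{\Delta_i^{\,2}}{4}\ =:\ B(G).
$$
Next I would prove, by simultaneous induction on $i$, the monotonicity statements $\Delta_i(G)\le\Delta_i(K_n)$ and $n_i(G)\le n_i(K_n)$ --- the inductive step for the second using $n_{i+1}=\sum_v\binom{\deg(v)}{2}\le n_i\binom{\Delta_i}{2}$ with equality for $K_n$ by regularity --- together with $W(G)\ge W(K_n)=\binom n2$. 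Since every ingredient of $B$ is monotone in these quantities, $B(G)\le B(K_n)$, and the computation above shows $B(K_n)=(1+o(1))\,W(L^k(K_n))/W(K_n)$. This already yields $W(L^k(G))/W(G)\le (1+o(1))\,W(L^k(K_n))/W(K_n)$ for every $G$.

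The main obstacle is removing that $(1+o(1))$. The sandwich is not exact: its accumulated slack along the $k$ iterations, even for $K_n$ itself, is of order $n^{2k-1}$, and this is precisely the order of the gap $B(K_n)-B(G)$ for the strongest competitors --- for instance the graph consisting of $K_{n-k}$ with a pendant path of length $k$, whose $k$-th iterated line graph is essentially $L^k(K_{n-k})$, so that its ratio agrees with that of $K_n$ up to a factor $1-\Theta(1/n)$. To finish one must therefore, at the last step (or the last few steps), replace the sandwich by the exact edge-distance formula, evaluate its correction term to second order for $L^{k-1}(K_n)$ (where it is negligible because almost all edge pairs are far apart), and show that for every $G\ne K_n$ at least one of the deficits \lq\lq$W(G)>\binom n2$\rq\rq, \lq\lq$|E(G)|<\binom n2$\rq\rq, \lq\lq$\Delta(G)<n-1$\rq\rq\ forces a loss in $B(G)$ strictly exceeding $K_n$'s residual slack. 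Carrying this second-order comparison out uniformly over all $G$, rather than for a handful of guessed extremal families, is the crux of the maximum part.

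For the minimum, note first that if $\delta(G)\ge2$ then each edge $uv$ has degree $\deg u+\deg v-2\ge2$ in $L(G)$, so $\delta(L(G))\ge2$; hence Theorem~\ref{thm:1} iterates to $W(L^k(G))\ge W(G)$, and the ratio is at least $1$, which already exceeds $W(L^k(P_n))/W(P_n)=\binom{n-k+1}{3}/\binom{n+1}{3}<1$. It remains to treat connected graphs with a vertex of degree $1$. The key structural fact is that $P_n$ is the only connected graph whose line graph loses vertices, and more precisely that for $G\ne P_n$ one has $n_i(G)\ge n-1$ for all $i\ge1$ (since $L^i(G)$ is connected and not a path, hence not a tree, so $|E(L^i(G))|\ge|V(L^i(G))|$), whereas $n_i(P_n)=n-i$ decreases as fast as possible. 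Combining this with $W(G)\le W(P_n)=\binom{n+1}{3}$ and a lower bound on $W(L^k(G))$ obtained by tracking how a pendant path of length $\ell$ is shortened to one of length $\ell-1$ under $L$ while the rest of the graph does not shrink, one aims to show $W(L^k(G))/W(G)\ge\binom{n-k+1}{3}/\binom{n+1}{3}$. Here too the binding examples --- a very long path with a short cycle, or with a single extra branch, attached near one end --- come within $\Theta(1/n)$ of equality, so the final comparison again requires a second-order expansion of $W$ in terms of $n_k$ and the length of the peripheral path, together with a proof that this expansion always has the right sign; establishing that sign uniformly is the main difficulty of the minimum part.
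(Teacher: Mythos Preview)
The statement you are attempting to prove is presented in the paper as a \emph{conjecture}, not a theorem: the authors write ``We believe the following holds, it is proposed and considered in \cite{HKS}'' and give no proof. There is therefore no argument in the paper to compare your proposal against. To the best of my knowledge the conjecture remains open, so a complete proof would be a new result.

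Your sketch contains plausible heuristics and you are candid about its limitations, but as written it is not a proof. You yourself identify the essential gap in both halves: for the maximum, the sandwich/Gutman bound only gives $W(L^k(G))/W(G)\le(1+o(1))\,W(L^k(K_n))/W(K_n)$, and the competitors you name (e.g.\ $K_{n-k}$ with a pendant path) come within a $1-\Theta(1/n)$ factor of the target, so eliminating the $o(1)$ requires a genuinely new second-order argument that you do not supply; for the minimum, the same issue arises with long paths decorated by a small cycle or branch. Saying that ``establishing that sign uniformly is the main difficulty'' is an honest assessment, but it means the proposal is a programme rather than a proof. There is also a small slip in your bookkeeping: the identity you quote as $n_{i+1}=|E(L^i(G))|=\sum_v\binom{\deg_{L^i(G)}(v)}{2}$ conflates two different quantities --- $|E(L^i(G))|=\tfrac12\sum_v\deg_{L^i(G)}(v)$ gives $n_{i+1}$, while $\sum_v\binom{\deg_{L^i(G)}(v)}{2}=|E(L^{i+1}(G))|=n_{i+2}$ --- so the inductive bound $n_{i+1}\le n_i\binom{\Delta_i}{2}$ should read $n_{i+2}\le n_i\binom{\Delta_i}{2}$. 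This does not affect the asymptotic shape of your estimates but would need repair in any rigorous write-up.
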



\subsection{Graphs with given girth}\label{sec:wg=wlg}

The {\em girth} of a graph is the length of a shortest cycle contained in the graph.
A connected graph $G$ is isomorphic to $L(G)$ if and only if $G$ is a cycle.
Thus, cycles provide a trivial infinite family of graphs for which
$W(G)=W(L(G))$.
In~\cite{DM}, Dobrynin and Mel'nikov stated the following problem.
\begin{eqnarray}
\label{problem20}
&&\mbox{\it Is it true that for every integer } g \geq 5 \mbox{ \it  there exists a graph }
G \neq C_g \mbox{ \it of girth } g,\mbox{\qquad}\nonumber\\
&&\mbox{\it for which } W(G) = W(L(G))?
\end{eqnarray}

The above problem~({\ref{problem20}}) was solved by Dobrynin~\cite{D5}
for all girths $g\ne\{5,7\}$; these last two cases were solved
separately.
Already in \cite{DM}, Dobrynin and Mel'nikov \cite{DM} constructed
infinite families of graphs of girths three and four
with the property $W(G)=W(L(G))$.
Inspired by their result the following statement was proved in
\cite{CDKSV}.

\begin{theorem}
\label{girth_main2}
For every non-negative integer $h$, there exist infinitely many
graphs $G$ of girth $g = h^2 + h + 9$ with $W(L(G)) = W(G)$.
\end{theorem}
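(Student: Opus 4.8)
**The plan is to construct the graphs explicitly as "generalized theta graphs" or chains of such, then exploit a self-complementarity in how the line graph operation interacts with a carefully chosen cyclic skeleton.**

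First I would recall the mechanism behind the construction of Dobrynin and Mel'nikov for girths $3$ and $4$, and of the earlier Theorem \ref{girth_main2}: one takes a cycle $C_g$ and attaches identical "pendant" gadgets (trees or small graphs) at several equally spaced vertices, so that $G$ retains girth $g$, and then one computes $W(L(G))-W(G)$ as a function of the gadget sizes and the number $k$ of attachment points. The key point is that $L(G)$ is again a cycle-plus-gadgets type graph (the girth-$g$ cycle of $G$ becomes a girth-$g$ cycle in $L(G)$, and each pendant edge/gadget is transformed in a controlled way), so the difference $W(L(G))-W(G)$ becomes a polynomial expression in the parameters. One then shows that for the right choice of $g$ — here $g=h^2+h+9$ — this polynomial has an infinite family of integer solutions making the difference vanish. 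The exponent $h^2+h+9 = 4\binom{h+1}{2}+9$ strongly suggests that $k$, the number of gadgets, is taken proportional to $\binom{h+1}{2}$ (or to $h$), and the arithmetic of "$g$ large enough relative to the gadget so girth is preserved" forces $g$ to grow quadratically in the number of free parameters.

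The concrete steps I would carry out are: \textbf{(1)} Fix the skeleton to be $C_g$ with $k$ pairwise non-adjacent, roughly equally spaced vertices chosen as attachment points, where $k$ divides $g$ (or nearly so), and attach to each a fixed small graph $H$ (e.g. a path or a small tree) by identifying one vertex of $H$ with the attachment point. \textbf{(2)} Describe $L(G)$ explicitly: the $g$ cycle-edges form a $C_g$ in $L(G)$; at each attachment vertex of degree $d$ the line graph creates a clique $K_d$; the pendant gadget $H$ becomes $L(H)$ hanging off this clique. Verify girth$(L(G))=g$ (this needs $g$ not too small, giving the first inequality constraining $g$ from below). \textbf{(3)} Compute $W(G)$ and $W(L(G))$ by splitting distances into: cycle-to-cycle, cycle-to-gadget, and gadget-to-gadget contributions. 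Using the symmetry (all $k$ gadgets identical, equally spaced), each sum reduces to a closed form in $g$, $k$, and the fixed gadget parameters; the leading "gadget-to-gadget" term is where the dependence on $g$ as a modulus/spacing enters. \textbf{(4)} Set $\Delta(g,k):=W(L(G))-W(G)=0$ and solve: this should reduce, after cancellation of the large common terms, to a Diophantine condition of the shape (roughly) "$g$ equals a specific quadratic in $h$ and $k=$ linear in $h$", and choosing $g=h^2+h+9$ makes it solvable with infinitely many graphs (e.g. by letting the common gadget size or the cycle length within each arc vary freely along an arithmetic progression).

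\textbf{The main obstacle} I expect is Step (3)–(4): getting the gadget-to-gadget distance sum in closed form when the $k$ gadgets sit at equally spaced points on $C_g$ requires summing $\sum_{1\le i<j\le k} (\text{size}_i)(\text{size}_j)\,d_{C_g}(\text{point}_i,\text{point}_j)$ and its $L(G)$-analogue, and the two differ only in lower-order "local" terms coming from how a gadget's internal vertices and the clique $K_d$ replace a single vertex — so one must track these corrections to precisely the order at which they can cancel the main terms. Matching the residual to exactly $g=h^2+h+9$ (rather than some nearby value) is the delicate arithmetic: it presumably comes from requiring both $\Delta=0$ \emph{and} girth preservation \emph{and} integrality of the gadget parameters simultaneously, and I would expect the bulk of the real work to be isolating which three-parameter family of $(g,k,\text{gadget})$ does all three, with the clean quadratic appearing only after that optimization. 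A secondary check is confirming the girth is not accidentally lowered in $L(G)$ by the cliques $K_d$ at high-degree attachment vertices, which bounds $d\le$ something and hence limits how the gadgets may be attached.
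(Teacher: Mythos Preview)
The paper does not contain a proof of this theorem. It is a survey article, and Theorem~\ref{girth_main2} is quoted from~\cite{CDKSV} without any argument being reproduced; the surrounding text only records the history (Dobrynin and Mel'nikov's girth-$3$ and girth-$4$ families, Dobrynin's later resolution of~(\ref{problem20})) and then states Conjecture~\ref{problem30}. So there is nothing in the paper to compare your proposal against.

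As for the proposal on its own merits: what you have written is a plan, not a proof, and you are candid about that. The overall architecture---attach identical pendant gadgets at regularly spaced vertices of $C_g$, describe $L(G)$ explicitly, and solve $W(L(G))-W(G)=0$ as a Diophantine condition in the parameters---is indeed the style of argument used in~\cite{CDKSV} and the related papers, so the instincts are sound. But the content of the theorem lies precisely in the computation you defer: which gadget, how many attachment points, and why the arithmetic forces $g=h^2+h+9$ rather than some other quadratic. Your Step~(4) says the equation ``should reduce'' to the right form and that the quadratic will appear ``presumably'' after optimization; until that reduction is actually carried out and the infinite family exhibited, there is no proof here. In particular, your guess that $k$ is proportional to $\binom{h+1}{2}$ and that the $9$ comes from a girth-preservation constraint is speculation, not something you have verified. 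To turn this into a proof you would need to fix the gadget concretely, perform the distance sums in closed form, and check that the resulting equation in $(g,k,\text{gadget size})$ really has the claimed solution set.
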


The above result encouraged the authors of~\cite{CDKSV} to state
the following conjecture.

\begin{conjecture}
\label{problem30}
For every integer $g \geq 3$, there exist infinitely many graphs $G$
of girth $g$ satisfying $W(G) = W(L(G))$.
\end{conjecture}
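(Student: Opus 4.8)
The plan is to upgrade the \emph{single examples} that are already available into \emph{infinite families}. For every $g\ge 3$ there is a graph $G_g\ne C_g$ of girth $g$ with $W(G_g)=W(L(G_g))$: for $g\in\{3,4\}$ infinite families are already known (Dobrynin and Mel'nikov), and for $g\ge 5$ the affirmative answer to problem~(\ref{problem20}) --- Dobrynin's resolution together with the separate treatment of $g=5,7$ --- supplies at least one such $G_g$. Starting from such a seed, I would attach a parametrised family of pendant gadgets and try to solve the equation $W=W(L)$ within the family. Some care is forced by the unicyclic obstruction of Gutman and Pavlovi\'c (and by Theorem~\ref{thm:1}): a cycle is the only connected graph of cyclomatic number at most $1$ with $W(L(G))=W(G)$, so any useful seed must have cyclomatic number at least $2$ together with a genuine tree part to play with.

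Concretely, fix $g$, take a seed $G_g$ with two pendant paths $Q_1,Q_2$ rooted at vertices $x_1,x_2$, and let $G_g(\ell_1,\ell_2)$ be obtained by replacing $Q_i$ with a pendant path of length $\ell_i$, so that $G_g(\ell_1^0,\ell_2^0)=G_g$ for the original lengths. Using Theorem~\ref{Buckley.thm} for the newly inserted tree parts together with the standard bridge decomposition of the Wiener index, one writes $W(G_g(\ell_1,\ell_2))$ and $W(L(G_g(\ell_1,\ell_2)))$ as explicit polynomials in $(\ell_1,\ell_2)$. The cubic within-path terms $\binom{\ell_i+1}{3}$ and the mixed terms of the form $\ell_i\binom{\ell_j+1}{2}$ appear identically in $W$ and in $W(L)$, hence cancel in the difference, so
\[
  \Delta(\ell_1,\ell_2):=W(L(G_g(\ell_1,\ell_2)))-W(G_g(\ell_1,\ell_2))
\]
is a quadratic polynomial in $(\ell_1,\ell_2)$ whose quadratic-form part is governed by the cyclomatic number of $G_g$ and by the difference between $d_{G_g}(x_1,x_2)$ and the corresponding distance in $L(G_g)$. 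Since $\Delta(\ell_1^0,\ell_2^0)=0$, the whole problem reduces to: choose the seed and the attachment data --- possibly using $k\ge 3$ pendant paths, or slightly richer gadgets, or an automorphism permuting some of them so that $\Delta$ becomes symmetric and degenerates along a diagonal --- so that the resulting conic or quadric $\Delta=0$ has infinitely many solutions in non-negative integers.

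The main obstacle is exactly this last step, carried out \emph{uniformly in $g$}. One has little control over the individual line-graph distances entering the coefficients, and the quadratic-form part of $\Delta$ tends to be ``small and positive'' (it is tied to the cyclomatic number), so one cannot count on it being indefinite for free; the solvability must be engineered through a clever choice of seeds and gadgets, and the construction has to be explicit enough to be performed for every girth simultaneously. An alternative route that runs into the same bottleneck is to start from the construction behind Theorem~\ref{girth_main2}: there the value $g=h^2+h+9$ is dictated by the lengths of certain cycles in the gadget, and one would try to re-balance the Wiener-index computation after replacing those lengths so as to realise a prescribed girth --- perhaps composing with a uniform edge-subdivision, which multiplies the girth by a fixed factor and admits closed formulas for $W$ and $W(L)$, to cover the remaining residue classes. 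In every variant the heart of the matter is the same: exhibiting, for each $g$, a genuine infinite family of non-negative integer solutions of an explicit quadratic Diophantine condition.
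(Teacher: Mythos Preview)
The statement you are attempting to prove is presented in the paper as an open \emph{conjecture}, not as a theorem; there is no proof in the paper to compare your proposal against. The authors explicitly state it as a conjecture motivated by Theorem~\ref{girth_main2} (which settles only the special girths $g=h^2+h+9$), and they do not claim or sketch any argument for the general case.

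Your write-up is accordingly not a proof but a research plan, and you are candid about this: you yourself identify the ``main obstacle'' as the uniform-in-$g$ solvability of the resulting quadratic Diophantine condition, and you do not resolve it. The ingredients you list --- seeding with the Dobrynin examples for each girth, attaching pendant paths, using Buckley's theorem and bridge decompositions to reduce $W(L)-W$ to a low-degree polynomial, then looking for infinitely many integer points on the resulting conic --- are reasonable heuristics, but the crucial step (showing that for \emph{every} $g$ one can choose the seed and attachment data so that the conic $\Delta=0$ has infinitely many non-negative integer solutions) is precisely the content of the conjecture and is left entirely open in your proposal. In particular, you give no mechanism guaranteeing that the quadratic form is indefinite or degenerate, nor any argument that the Dobrynin seeds for $g\ge 5$ possess the structural features (two pendant paths at suitable distances, the right cyclomatic contribution) that your scheme requires. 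As it stands, the proposal is a plausible outline of where a proof might be sought, not a proof.
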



\subsection{Graphs and cyclomatic number}

The {\em cyclomatic number} $\lambda(G)$ of a graph $G$ is defined
as $\lambda(G)=|E(G)|-|V(G)|+1$. Some attention was devoted to
graphs $G$ with prescribed cyclomatic number satysfying the equality
$W(L(G)) = W(G)$. As already mentioned, the smallest $26$ bicyclic
graphs with $9$ vertices are reported in
\cite{dob1-97,gjd-sgdgedlg-97}. Bicyclic graphs up to $13$ vertices
are counted in \cite{DM3} and diagrams of such graphs with $9$ and
$10$ vertices are also given. The smallest $71$ tricyclic graphs
with $12$ vertices are counted in \cite{dob1-97}. There are $733$
tricyclic graphs of order $13$ with this properties \cite{DM3}.
Denote by $n(\lambda)$ the minimal order of graphs with cyclomatic
number $\lambda \geq 2$ and $W(L(G)) = W(G)$. Then $n(2) = 9$ and
$n(3) = 12$.

Graphs with increasing cyclomatic number were constructed in
\cite{dob2-04,dm-wilgcn-05,DobryninM05}. To construct graphs from
\cite{DobryninM05}, properties of the Pell equation from the number
theory were applied. The cyclomatic number $\lambda$ of graphs from
\cite{DobryninM05} rapidly grows and the order of graphs is
asymptotically equal to $(2+\sqrt{ 5})\lambda \approx 4.236\lambda$
when $\lambda \rightarrow \infty$. The following conjecture was put
forward in \cite{DobryninM05}:

\begin{conjecture}
The graphs constructed in  \cite{DobryninM05} have the minimal order among all
graphs with given cyclomatic number satisfying the property $W(L(G)) = W(G)$.
\end{conjecture}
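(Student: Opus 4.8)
\medskip
\noindent\textbf{Towards a proof.}
Since this is a long-standing open problem, what follows is a strategy rather than a proof. Write $\mu(G)=W(L(G))-W(G)$, so the graphs of interest are precisely those with $\mu(G)=0$, and the goal is: for fixed $\lambda(G)=\lambda$, the minimum of $|V(G)|$ over such $G$ is realized by the construction of \cite{DobryninM05}. The governing tension is already visible from the results above: by Theorem~\ref{Buckley.thm} a tree $T$ on $n$ vertices has $\mu(T)=-\binom{n}{2}<0$, whereas by Theorem~\ref{thm:1} a graph of minimum degree at least $2$ has $\mu(G)\ge 0$, with equality only for cycles. Hence a non-cycle with $\mu(G)=0$ must couple a cyclic skeleton carrying all $\lambda$ independent cycles (which pushes $\mu$ up) with pendant trees (which pull $\mu$ down), tuned so the two cancel exactly. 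The first step I would take is to make this precise along the block--cut tree: isolate the $2$-edge-connected core $C$ and the hanging trees, and prove a decomposition inequality of the shape $\mu(G)\ge \Phi(C)-\Psi(\text{pendant trees})$, where $\Psi$ is at most quadratic in the total number of pendant-tree vertices and $\Phi(C)$ is bounded below in terms of how ``stretched'' $C$ is (long cycles, long connecting paths).

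The crux is a \emph{sharp} lower bound on $|V(G)|$ in terms of $\lambda$. Gutman's inequality $W(L(G))\ge W(G)-n(n-1)+\frac{1}{2} m(m+1)$ together with $m=n+\lambda-1$ forces $2n(n-1)\ge (n+\lambda-1)(n+\lambda)$, hence $n\ge\frac{1}{2}\bigl(1+2\lambda+\sqrt{1+8\lambda^2}\bigr)$, i.e.\ $n\gtrsim(1+\sqrt2)\,\lambda$; but $1+\sqrt2<2+\sqrt5$, so this is far from enough and a new argument is needed. The feature to exploit is that $\mu(G)=0$ forces the cyclic part to be \emph{long}: compact dense cyclic gadgets inflate $W(L(G))$ faster than any amount of pendant tree can deflate it. Concretely, I would model an extremal $G$ as cycles of lengths $c_1,c_2,\dots$ linked by paths of lengths $p_1,p_2,\dots$ and decorated with pendant paths of lengths $q_1,q_2,\dots$, expand $\mu$ as a function of these parameters to leading order, and minimize $\sum_i(c_i+p_i+q_i)$ subject to $\mu=0$ with the number of cycles (equivalently $\lambda$) fixed. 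I expect the first-order (Lagrange) conditions to collapse to a single quadratic relation between ``cycle length per cycle'' and ``path length per cycle'' whose positive root involves $\sqrt5$; this is exactly why the Pell equation $x^2-5y^2=\pm4$ (equivalently Lucas/Fibonacci numbers, whose ratios tend to $\sqrt5$) appears in \cite{DobryninM05}, and why the asymptotic vertex density is $2+\sqrt5$.

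Finally I would match the bound to the construction: for the infinitely many $\lambda$ for which the relevant Pell equation is solvable, the optimal real lengths turn out to be integers and the graphs of \cite{DobryninM05} realize them exactly, so the lower bound is attained; for the remaining $\lambda$ one argues by a rounding/perturbation estimate that nothing smaller exists. The main obstacle is the sharp lower bound of the preceding paragraph: neither Gutman's inequality nor Theorem~\ref{thm:1} is tight enough, so one needs a genuinely new inequality for $\mu(G)$ that simultaneously records (i) that the pendant-tree contribution to $\mu$ is only quadratic in the pendant size and (ii) that the cyclic skeleton's contribution grows super-linearly in $\lambda$ unless the skeleton is maximally stretched, \emph{with the exact constant $2+\sqrt5$}. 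Proving that inequality --- presumably by a convexity/smoothing argument that reduces an arbitrary extremal graph to the canonical ``cycles joined by paths with pendant paths'' shape, followed by the finite optimization above --- is where essentially all the difficulty lies, including the delicate small cases such as $n(2)=9$ and $n(3)=12$, which require ruling out every $O(1)$-sized competitor.
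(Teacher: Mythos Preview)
The statement you were given is an open \emph{conjecture} in the paper, not a theorem; the paper offers no proof and simply records it as a problem posed in~\cite{DobryninM05}. So there is nothing to compare your argument against, and you were right to frame what you wrote as a strategy rather than a proof.

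As a strategy, your outline is sensible and honest about where the difficulty lies. A couple of remarks. First, your computation from Gutman's inequality is correct and gives only $n\gtrsim(1+\sqrt2)\lambda$, so you rightly conclude that a genuinely new inequality is required to reach the constant $2+\sqrt5$; this gap is the whole content of the conjecture, and nothing in the existing toolkit of the paper (Theorems~\ref{Buckley.thm}, \ref{thm:1}, or the Gutman bound) closes it. Second, be aware that your reduction ``model an extremal $G$ as cycles of lengths $c_i$ linked by paths of lengths $p_j$ with pendant paths of lengths $q_k$'' is itself a strong structural assumption: extremal graphs could in principle have their $\lambda$ independent cycles sharing edges (theta-like or more tangled $2$-cores), and the smoothing/convexity step you allude to would have to rule that out before the Lagrange calculation becomes legitimate. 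Third, note from the paper that the construction in~\cite{DobryninM05} does \emph{not} hit every cyclomatic number (``$\lambda$ rapidly grows''), while a separate construction in~\cite{dm-wilgcn-05} gives $n(\lambda)\le 5\lambda$ for all $\lambda\ge4$; your ``rounding/perturbation'' step for the remaining $\lambda$ would therefore have to interpolate between Pell solutions, and it is not obvious that the conjectured family is still optimal there rather than some hybrid.

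In short: your write-up is an appropriate response to an open problem --- it isolates the governing parameter $\mu$, derives the best easy lower bound, explains why the Pell/$\sqrt5$ phenomenon is expected, and correctly names the missing sharp inequality as the crux --- but it is not, and does not claim to be, a proof.
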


Graphs for all possible  $\lambda \geq 2$ were constructed in \cite{dm-wilgcn-05}. It is known that
$n(\lambda) \leq 5\lambda$ for $\lambda \geq 4$, $n(5) \leq 21$ and $n(7) \leq 29$. The following problem was
posed in \cite{dob1-97}.

\begin{problem}
Find an exact value of $n(\lambda)$ for small $\lambda \geq 4$.
\end{problem}


\subsection{Quadratic line graphs}

The graph $L^2(G)$ is also called the {\em quadratic line graph} of $G$.
As mentioned above, for non-trivial tree $T$ we cannot have $W(L(T))=W(T)$.
But there are trees $T$ satisfying
\begin{equation}
\label{eq:basic}
  W(L^2(T))=W(T),
\end{equation}
see \cite{D,DM1,DM2,KS_chapter}.
Obviously, the simplest trees are such which have a unique vertex of degree
greater than $2$.
Such trees are called {\em generalized stars}.
More precisely, {\em generalized $t$-star} is a tree obtained from the star
$K_{1,t}$, $t\ge 3$, by replacing all its edges by paths of positive
lengths, called {\em branches}.
In \cite{DM} we have the following theorem.

\begin{theorem}
\label{thm:DM}
Let $S$ be a generalized $t$-star with $q$ edges and branches of length
$k_1,k_2,\ldots,k_t$.
Then
\begin{equation}
\label{eq:general}
  W(L^2(S))=
  W(S)+\frac{1}{2}\binom{t-1}{2}\bigg(\sum_{i=1}^t k_i^2+q\bigg)-
  q^2 + 6\binom t4.
\end{equation}
\end{theorem}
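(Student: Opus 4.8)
\noindent The plan is to make the graphs $L(S)$ and $L^2(S)$ completely explicit and then compare $W(L^2(S))$ with $W(S)$ term by term. First I would describe $L(S)$: writing $u_i$ for the edge of $S$ incident with the center on the $i$-th branch, the edges $u_1,\dots,u_t$ pairwise meet at the center, so they span a clique $K_t$ in $L(S)$, while the remaining $k_i-1$ edges of the $i$-th branch form a path hanging off $u_i$. Thus $L(S)$ is obtained from $K_t$ by attaching at $u_i$ a pendant path on $k_i-1$ new vertices; in particular $|V(L(S))|=q$ and $|E(L(S))|=\binom t2+q-t$.

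Next I would describe $L^2(S)=L(L(S))$. The $\binom t2$ clique-edges $u_iu_j$ of $L(S)$ induce in $L^2(S)$ the triangular (Johnson) graph $T(t)=L(K_t)$, whose vertices are the $2$-subsets $\{i,j\}\subseteq\{1,\dots,t\}$, two of them adjacent exactly when they intersect; this plays the role of a core. For each branch $i$ the $k_i-1$ path-edges of $L(S)$ become a path $p^i_1p^i_2\cdots p^i_{k_i-1}$ in $L^2(S)$, and the only further adjacencies are that $p^i_1$ is joined to every core vertex containing the index $i$ (these $t-1$ vertices forming a clique in $T(t)$); distinct branch-paths and the vertices $p^i_m$ with $m\ge 2$ are otherwise isolated from the rest. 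From this picture all distances in $L^2(S)$ are read off at once: within the core, $d(\{i,j\},\{k,l\})$ equals $1$ if the two pairs meet and $2$ otherwise; $d(\{i,j\},p^a_m)=m$ if $a\in\{i,j\}$ and $m+1$ if $a\notin\{i,j\}$; $d(p^a_m,p^a_{m'})=|m-m'|$; and $d(p^a_m,p^b_{m'})=m+m'$ for $a\ne b$, the last path being routed through the core vertex $\{a,b\}$.

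Then I would evaluate $W(L^2(S))$ by splitting the vertex pairs into the four types core/core, core/path, path/path inside one branch, path/path across two branches, using $W(P_n)=\binom{n+1}{3}$ together with the counts $3\binom t4$ for the disjoint pairs of $2$-subsets of $\{1,\dots,t\}$ and $t-1$ for the $2$-subsets through a fixed index. This produces a closed expression in $t$ and $k_1,\dots,k_t$. In parallel a short direct computation on the generalized star gives $W(S)=\sum_a\binom{k_a+2}{3}+\sum_a(q-k_a)\binom{k_a+1}{2}$. Subtracting, and repeatedly substituting $\sum_a k_a=q$ and $\sum_a\binom{k_a}{2}=\tfrac12\bigl(\sum_a k_a^2-q\bigr)$, I would collapse the difference to $\tfrac12\binom{t-1}{2}\bigl(\sum_a k_a^2+q\bigr)-q^2+6\binom t4$, which is the claim.

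The conceptual content — recognizing $L(S)$ as a clique with pendant paths and $L^2(S)$ as $T(t)$ with pendant paths, then reading off the distances — is short; the genuinely laborious and error-prone step is the final bookkeeping, where the cubic-in-$t$ and cubic-in-$k_a$ contributions of $W(L^2(S))$ and $W(S)$ must cancel so that exactly the stated expression (quadratic in the branch lengths, quartic in $t$) survives. It is prudent to verify the identity on small cases first — e.g.\ $t=3$ with all $k_i=1$ (where $S=K_{1,3}$ and $L^2(S)=K_3$, so both sides equal $3$), $t=4$ with all $k_i=1$ (where $L^2(S)$ is the octahedron, both sides $18$), and $t=3$ with all $k_i=2$ (both sides $21$) — before trusting the algebra.
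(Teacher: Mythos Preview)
The paper does not actually prove this theorem; it is quoted from Dobrynin and Mel'nikov \cite{DM} and used as a black box, so there is no in-paper argument to compare against. Your approach is correct and is the natural one: the identification of $L(S)$ as $K_t$ with a pendant path of $k_i-1$ vertices at each clique vertex, and of $L^2(S)$ as the triangular graph $T(t)=L(K_t)$ with a pendant path of $k_i-1$ vertices attached at the $(t-1)$-clique through index $i$, is accurate, and your four distance formulas follow immediately. Your expression $W(S)=\sum_a\binom{k_a+2}{3}+\sum_a(q-k_a)\binom{k_a+1}{2}$ is also right, and the three sanity checks you list all come out correctly. What remains is exactly the routine but lengthy symbolic simplification you describe, and that is presumably how the original reference proceeds as well.
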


Based on this theorem, it is proved in \cite{DM} that $W(L^2(S))<W(S)$ if
$S$ is a generalized $3$-star, and $W(L^2(S))>W(S)$ if $S$ is a generalized
$t$-star where $t\ge 7$.
Thus, property (\ref{eq:basic}) can hold for generalized $t$-stars only when
$t\in\{4,5,6\}$.
In \cite{DM} and \cite{KS_aust}, for every $t\in\{4,5,6\}$
infinite families of generalized $t$-stars with property (\ref{eq:basic}) were found, see also \cite{KS_chapter}.
These results suggest the following conjecture \cite{dob05}:

\begin{conjecture}
\label{conj:infL2}
Let $T$ be a non-trivial tree such that $W(L^2(T))=W(T)$.
Then there is an infinite family of trees $T'$ homeomorphic to $T$, such that
$W(L^2(T'))=W(T')$.
\end{conjecture}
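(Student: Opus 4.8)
To attack this conjecture, the plan is to reduce it to a question about integer zeros of a single quadratic Diophantine equation and then to produce an infinite family of such zeros. Let $B$ be the tree obtained from $T$ by suppressing all vertices of degree $2$, and put $s=|E(B)|$. The trees homeomorphic to $T$ are exactly those obtained from $B$ by replacing each edge with a path of positive length, so such a tree $T'$ is determined by its vector of segment lengths $\vec\ell=(\ell_1,\dots,\ell_s)$, all $\ell_i\ge 1$. Since $W(L^2(T))=W(T)$, the tree $T$ is not a path, hence $B$ has a vertex of degree $\ge 3$ and $s\ge 3$. The key reduction is the assertion that the \emph{defect}
$$\Delta(T')\;=\;W(L^2(T'))-W(T')$$
is a polynomial $Q(\ell_1,\dots,\ell_s)$ of total degree at most $2$ whose homogeneous degree-$2$ part $q_2$ depends only on $B$. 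Theorem~\ref{thm:DM} is exactly this statement for $B=K_{1,t}$, where $q_2(\vec\ell)=\frac{1}{2}\binom{t-1}{2}\sum_i\ell_i^2-\bigl(\sum_i\ell_i\bigr)^2$. Granting the reduction, the hypothesis of the conjecture says that $Q$ vanishes at the integer point $\vec k=(k_1,\dots,k_s)$ formed by the segment lengths of $T$, and it remains to exhibit infinitely many positive integer zeros of $Q$.

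First I would prove the reduction. Away from the branch vertices, $T'$, $L(T')$ and $L^2(T')$ consist of long induced paths, while a branch vertex of degree $d$ contributes a complete graph $K_d$ to $L(T')$; this local picture near each branch or leaf vertex is the same for all segment lengths. Using Buckley's identity (Theorem~\ref{Buckley.thm}) for the first iteration, and then expanding $W(L^2(T'))$ through a careful accounting of distances in $L^2(T')$ in the spirit of the computations in~\cite{DM,KS_aust,KS_chapter}, one writes $W(T')$ and $W(L^2(T'))$ as explicit sums over pairs of vertices. The degree-$3$ homogeneous parts of $W(T')$ and $W(L^2(T'))$, viewed as polynomials in $\ell_1,\dots,\ell_s$, agree: since $L^2(P_m)=P_{m-2}$, within a long segment the passage to $L^2(T')$ is essentially length-preserving, and the distance between two vertices lying in distinct segments changes only by an additive constant determined by $B$ and by which ends one measures from. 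Hence $Q$ has degree at most $2$, and tracking the quadratic terms shows that $q_2$ is a form depending only on $B$. I would also record a closed formula for $q_2$, the natural common generalization of Theorem~\ref{thm:DM}, because the second step is driven entirely by the shape of $q_2$.

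The second step passes from the single known zero $\vec k$ to infinitely many. I would restrict $Q$ to a two-dimensional affine plane $\Pi$ through $\vec k$ spanned by integer direction vectors, in the simplest case by varying two segment lengths $\ell_i,\ell_j$ and freezing the rest; the resulting binary quadratic $\widetilde Q$ vanishes at an integer point, i.e.\ it is a conic carrying a rational point. If the degree-$2$ part of $\widetilde Q$ is indefinite with non-square discriminant, then $\widetilde Q=0$ is, after completing the square, a Pell conic $X^2-DY^2=N$ with $D>0$ not a perfect square; since it already contains an integral point, multiplying by powers of a fundamental unit of $\mathbb Z[\sqrt D]$ produces infinitely many integral points, and when an asymptotic direction of the hyperbola lies in the positive orthant, infinitely many of those points have all coordinates $\ge 1$. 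For $B=K_{1,t}$ this binary form is $\frac{1}{2}\ell_i^2-2\ell_i\ell_j+\frac{1}{2}\ell_j^2$, which is indefinite, with both asymptotic slopes $2\pm\sqrt{3}$ positive and a non-square discriminant, so the mechanism runs; this is exactly the Pell-equation argument already exploited in~\cite{DobryninM05}. Degenerate shapes of the degree-$2$ part of $\widetilde Q$ (a perfect square, giving a parabola, or $0$, giving a line) make the infinite family even easier to write down, so the only genuine obstruction would be that $\widetilde Q=0$ is an ellipse for every admissible $\Pi$; to rule this out one wants $q_2$ to take both signs at integer points of the positive orthant, and then chooses $\Pi$ accordingly, possibly varying three segment lengths rather than two.

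I expect the main obstacle to be establishing, uniformly in $B$, that $q_2$ really is Pell-friendly in the above sense whenever the hypothesis of the conjecture is non-vacuous, i.e.\ precisely for those $B$ for which $\Delta$ vanishes somewhere in the positive orthant. One warning: indefiniteness of $q_2$ over $\mathbb R$ does not make the relevant conic isotropic over $\mathbb Q$ (the binary form $X^2-3Y^2$ is indefinite yet has no nonzero rational zero), so one cannot simply slide $\vec k$ along an isotropic line keeping $Q=0$; the argument must instead pass through the nonzero value $N$ that $\widetilde Q$ takes and use that this $N$ is represented by the form, which is why it is essential to have the known solution $\vec k$ in hand before deforming. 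Once a closed formula for $q_2$ is available, checking its sign behaviour, handling the degenerate configurations that a highly symmetric $B$ might force, and verifying positivity of the solutions produced should all be routine; for generalized stars every step can be carried out by hand, and it is the clean description of $q_2$ for an arbitrary $B$, together with the verification of its Pell-friendliness, that constitutes the real content.
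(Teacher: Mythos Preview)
The statement you are attempting to prove is listed in the paper as an \emph{open conjecture} (Conjecture~\ref{conj:infL2}), not as a theorem; the paper offers no proof, only the remark that the special cases $t\in\{4,5,6\}$ of generalized stars ``suggest'' it. So there is no argument in the paper to compare your proposal against.

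As for the proposal itself: it is a reasonable strategic outline, and you are candid that it is not a proof. The reduction step---that $\Delta(T')=W(L^2(T'))-W(T')$ is a polynomial of total degree at most $2$ in the segment lengths with leading form $q_2$ depending only on the homeomorphism type $B$---is plausible and consistent with Theorem~\ref{thm:DM}, but you have not actually carried it out beyond the generalized-star case; the claim that the cubic parts of $W(T')$ and $W(L^2(T'))$ cancel for arbitrary $B$ needs a proof, not just the heuristic that $L^2$ is ``length-preserving along segments''. The second step is where the real gap lies, and you identify it yourself: you need, for every $B$ admitting a zero of $Q$ in the positive orthant, a two- or three-dimensional integer slice on which the restricted quadratic is a Pell-type conic with an asymptote in the positive cone. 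You give no argument that such a slice always exists; indefiniteness of $q_2$ alone is not enough, and for a general $B$ the explicit shape of $q_2$ is not in hand. Until both the closed formula for $q_2$ and its ``Pell-friendliness'' are established uniformly in $B$, the conjecture remains open, which is exactly the status the paper assigns it.
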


Of course, more interesting is the question which types of trees satisfy (\ref{eq:basic}).
Perhaps such trees do not have many vertices of degree at least 3.
Let $\mathcal T$ be a class of trees which have no vertex of degree two, and such that
$T\in\mathcal T$ if and only if there exists a tree $T'$ homeomorphic to $T$, and such
that $W(L^2(T'))=W(T')$.

\begin{problem}
\label{prob:L2T}
Characterize the trees in $\mathcal T$.
\end{problem}

By the above results, among the stars only $K_{1,4}$, $K_{1,5}$, and
$K_{1,6}$ are in $\mathcal T$.
We expect that no tree in $\mathcal T$ has a vertex of degree exceeding $6$.
Based on our experience, we also expect that there is a constant $c$ such that
no tree in $\mathcal T$ has more than $c$ vertices of degree at least $3$.
Consequently, we believe that the following conjecture is true.

\begin{conjecture}
$\mathcal T$ is finite.
\end{conjecture}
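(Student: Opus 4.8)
The plan is to parametrize by \emph{topological type}. A tree in $\mathcal T$ has no vertex of degree $2$, so it \emph{is} a shape $\tau$, and every tree $T'$ homeomorphic to it arises from $\tau$ by replacing the $s:=|E(\tau)|$ edges with internally disjoint paths (branches) of lengths $k_1,\dots,k_s\ge 1$. Setting
$$
\Delta_\tau(k_1,\dots,k_s):=W\bigl(L^2(T')\bigr)-W(T'),
$$
the first step is to prove that for each fixed shape $\tau$ the function $\Delta_\tau$ is a polynomial in $k_1,\dots,k_s$ of total degree at most $2$ (Theorem~\ref{thm:DM} is exactly the case $\tau=K_{1,t}$). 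Granting this, finiteness of $\mathcal T$ follows once we bound, for $\tau\in\mathcal T$, the maximum degree of $\tau$ and the number of branch vertices of $\tau$: a shape with $b$ branch vertices of degree at most $D$ has at most $b(D+1)$ vertices, hence there are only finitely many such isomorphism types. So the target becomes the purely arithmetic question of when $\Delta_\tau$ can vanish on $\{k_i\ge 1\}$.

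To get the formula I would use that for a tree $T'$ the line graph $L(T')$ is a block graph: its blocks are the cliques $K_{d(v)}$, one per vertex $v\in V(T')$, glued together in the pattern of $T'$ itself, so $L(T')$ deviates from a spanning tree only \emph{locally}, inside the cliques that sit at the branch vertices of $\tau$. Feeding Buckley's identity (Theorem~\ref{Buckley.thm}), $W(L(T'))=W(T')-\binom{|V(T')|}{2}$, together with the standard description of distances in a line graph, into $W(L^2(T'))=W(L(L(T')))$, one finds that treating $L(T')$ as if it were a tree contributes the main term $W(L^2(T'))\approx W(T')-q^2$ with $q:=\sum_i k_i$, while each branch vertex $v$ of degree $d(v)$ contributes a \emph{positive} correction: quadratic in the branch lengths incident to $v$ with leading coefficient a positive multiple of $\binom{d(v)-1}{2}$, plus a constant of order $\binom{d(v)}{4}$, plus interaction terms coming from branch vertices on a common branch. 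Assembling these, and checking that the star case reproduces Theorem~\ref{thm:DM}, gives $\Delta_\tau=Q(k)+L(k)+c$ with $Q$ homogeneous quadratic, $L$ linear with nonnegative coefficients, and $c\ge 0$; schematically $Q(k)=\sum_i\alpha_i k_i^2-\bigl(\sum_i k_i\bigr)^2+(\text{further cross terms})$, where $\alpha_i$ is governed by $\binom{d(v)-1}{2}$ at the two ends of branch $i$.

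Two regimes are then immediate. If $Q$ is positive definite on the positive orthant --- which, by Cauchy--Schwarz, happens exactly when some vertex has degree so large that its $\alpha_i$'s overwhelm the number of branches, i.e. for near-star shapes, the same mechanism that takes generalized $6$-stars to $7$-stars --- then $\Delta_\tau=Q+L+c>0$ for all $k_i\ge 1$ and $\tau\notin\mathcal T$. Symmetrically, if $Q$ is negative definite on the positive orthant (as for generalized $3$-stars), then $\Delta_\tau<0$ throughout and again $\tau\notin\mathcal T$. In either regime $\Delta_\tau$ has constant sign on the lattice, so certainly no zero.

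The main obstacle is everything in between: shapes for which $Q$ is \emph{indefinite} on the positive orthant. Since $L,c\ge 0$, a zero of $\Delta_\tau$ forces $Q(k)\le 0$, so one must show that for all but finitely many shapes the cone $\{Q\le 0\}$ contains no positive integer point satisfying the exact identity $Q(k)=-L(k)-c$. This asks for three things that go well past routine calculation: an \emph{exact} expression for all coefficients of $\Delta_\tau$ in terms of the degree sequence and incidence structure of $\tau$, not merely the leading ones; a sharp analysis of the sign of $Q$ on the positive orthant that uses the full quadratic form and not just its diagonal (the naive Cauchy--Schwarz estimate is far too lossy for long shapes of bounded degree); and a Diophantine or convex-geometric argument showing that the resonance $\Delta_\tau(k)=0$ can occur only when $\tau$ is boundedly complex. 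In the language of the two conjectures preceding this one, such a proof would simultaneously bound the maximum degree in $\mathcal T$ (the more accessible half, driven by the growth of $\binom{d(v)-1}{2}$) and bound the number of branch vertices in $\mathcal T$ (the hard half, where $Q$ can remain indefinite no matter how the degrees are arranged). Handling this indefinite case uniformly over all shapes is, I expect, the real difficulty --- and is presumably why the statement is still only a conjecture.
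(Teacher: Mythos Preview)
There is nothing to compare against: in the paper this statement is a \emph{conjecture}, not a theorem, and the authors offer no proof. Their entire justification is the heuristic paragraph immediately preceding it --- that among stars only $K_{1,4}$, $K_{1,5}$, $K_{1,6}$ lie in $\mathcal T$, that they ``expect that no tree in $\mathcal T$ has a vertex of degree exceeding~$6$'', and that they expect a uniform bound on the number of branch vertices. You have correctly read the situation: your final paragraph acknowledges that the indefinite case is unresolved and ``is presumably why the statement is still only a conjecture''. So your write-up is not a proof and does not claim to be one; it is a programme.

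As a programme it is reasonable and in fact more detailed than anything the paper supplies. The reduction to a quadratic polynomial $\Delta_\tau(k_1,\dots,k_s)$ is the natural extension of Theorem~\ref{thm:DM} from stars to arbitrary shapes, and your identification of the two sub-goals (bounding the maximum degree, bounding the number of branch vertices) matches exactly the two informal expectations the authors state. Just be aware that the first step --- that $\Delta_\tau$ is a polynomial of total degree at most~$2$ for \emph{every} shape $\tau$, with the coefficient structure you describe --- is asserted rather than proved in your sketch; it is plausible by analogy with the star case, but establishing it in general (with the correct cross terms between distinct branch vertices) is itself a nontrivial computation that the paper does not carry out either.
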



\subsection{Iterated line graphs}

As we have seen, there is no non-trivial tree $T$ for which $W(L(T))=W(T)$
and there are many trees $T$, satisfying $W(L^2(T))=W(T)$.
However, it is not easy to find a tree $T$ and $i\ge 3$ such that
$W(L^i(T))=W(T)$.
In \cite{surv1}, Dobrynin, Entringer and Gutman posed the following problem:
\begin{equation}
\label{prob:ite_target}
\mbox{\it Is there any tree } T \mbox{ \it satisfying equality } W(L^i(T))=W(T)
\mbox{ \it for some } i\ge 3?
\end{equation}

Observe that if $T$ is a trivial tree, then $W(L^i(T))=W(T)$
for every $i\ge 1$, although here the graph $L^i(T)$ is empty.
The real question is, if there is a non-trivial tree $T$ and $i\ge 3$ such
that $W(L^i(T))=W(T)$.
The same question appeared four years later in~\cite{DM} as a conjecture.
Based on the computational experiments, Dobrynin and Mel'nikov expressed
their belief that the problem has no non-trivial solution and stated the
following conjecture:
\begin{equation}
\label{conj:ite_target}
\mbox{\it There is no tree }T\mbox{ \it satisfying equality }W(T)=W(L^i(T))
\mbox{ \it for any } i\ge 3.
\end{equation}

\begin{figure}[hhh]
\begin{center}
\epsfig{file=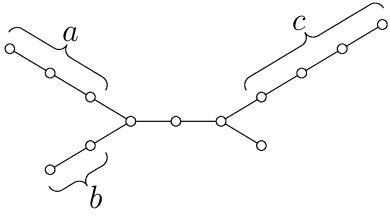, height=29mm}
\caption{The graph $H_{a,b,c}$.}
\label{fig:Habc}
\end{center}
\end{figure}

In a series of papers \cite{KPS4.5}, \cite{KPS5}, \cite{KPS1}, \cite{KPS2},
\cite{KPS3} and \cite{KPS4}, conjecture~(\ref{conj:ite_target})
was disproved and all solutions of problem~(\ref{prob:ite_target}) were found, see also~\cite{KS_chapter}.
Let $H_{a,b,c}$ be a tree on $a+b+c+4$ vertices, out of which two have
degree $3$, four have degree $1$ and the remaining $a+b+c-2$ vertices
have degree $2$.
The two vertices of degree $3$ are connected by a path of length $2$.
Finally, there are two pendant paths of lengths $a$ and $b$ attached to one
vertex of degree $3$ and two pendant paths of lengths $c$ and $1$ attached
to the other vertex of degree $3$, see Figure~{\ref{fig:Habc}} for $H_{3,2,4}$.
We have the following statement.

\begin{theorem}
\label{thm:H}
For every $j,k\in\mathbb Z$ define
\begin{eqnarray}
\label{eq:abc}
a&=&128+3j^2+3k^2-3jk+j,\nonumber\\
b&=&128+3j^2+3k^2-3jk+k,\nonumber\\
c&=&128+3j^2+3k^2-3jk+j+k.\nonumber
\end{eqnarray}
Then $W(L^3(H_{a,b,c}))=W(H_{a,b,c})$.
\end{theorem}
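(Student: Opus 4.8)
The plan is to compute $W(L^i(H_{a,b,c}))$ for $i=1,2,3$ explicitly as a polynomial in the branch parameters $a,b,c$, and then to show that the equation $W(L^3(H_{a,b,c}))=W(H_{a,b,c})$ reduces, after the dust settles, to a single Diophantine identity that the stated parametrization of $(a,b,c)$ satisfies. First I would set up notation: $H_{a,b,c}$ has $n=a+b+c+4$ vertices and exactly two branch vertices of degree $3$ joined by a path of length $2$, so its combinatorial structure is completely controlled by the three pendant-path lengths $a,b,c$ (plus the fixed pendant path of length $1$). The Wiener index $W(H_{a,b,c})$ itself is an easy closed form: decompose along edges using Theorem~\ref{W.th}, i.e. $W(H_{a,b,c})=\sum_{e}n_e(i)n_e(j)$, which gives a quadratic polynomial in $a,b,c$ with a symmetric leading part $\tfrac16(a+b+c)^3+\cdots$.

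The real work is iterating the line graph operation on a tree that is "almost a path." The key structural observation is that $L(H_{a,b,c})$ is again a near-path graph: each maximal path of $H_{a,b,c}$ becomes a slightly shorter path in $L(H_{a,b,c})$, the two degree-$3$ vertices turn into triangles, and the global tree-of-segments structure is preserved. So I would prove a lemma describing $L(H_{a,b,c})$ — and then $L^2$, $L^3$ — up to the relevant parameters: how the branch lengths transform, how many triangles/$K_4$-type pieces appear, and what the "core" between the branch vertices looks like after each step. With that in hand, $W(L^i(H_{a,b,c}))$ is computed the same way as $W$ of a tree-like graph: split the vertex pairs into those lying on a common geodesic segment (contributing a path-like sum) and those whose geodesic passes through one of the branch gadgets (contributing cross-terms of the form $(\text{size of one side})\cdot(\text{size of another})$, à la $N_2$). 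Collecting everything, $W(L^i(H_{a,b,c}))$ is a polynomial of degree $3$ in $a,b,c$ whose cubic part is again $\tfrac16(a+b+c)^3$ — this is the crucial cancellation, since it means $W(L^3(H_{a,b,c}))-W(H_{a,b,c})$ has degree at most $2$.

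Once $\Delta(a,b,c):=W(L^3(H_{a,b,c}))-W(H_{a,b,c})$ is known to be an explicit quadratic form in $a,b,c$, the remaining step is pure algebra: substitute $a=128+Q+j$, $b=128+Q+k$, $c=128+Q+j+k$ with $Q=3j^2+3k^2-3jk$, and check that $\Delta\equiv 0$ identically in $j,k$. The shifts by $128$ and the quadratic $Q$ are exactly engineered so that the linear and quadratic parts of $\Delta$ vanish; verifying this is a finite symbolic computation (expand, collect powers of $j$ and $k$, confirm all coefficients are zero). I expect the main obstacle to be the bookkeeping in the structural lemma for $L^2$ and $L^3$: tracking precisely how the two triangles created at the first iteration interact (they are joined by a short path, so at the second iteration one gets a small dense gadget whose contribution to $W$ must be computed exactly, not just asymptotically), and making sure no constant is dropped — because the final identity is an exact equality, every additive constant matters. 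A secondary subtlety is handling the degenerate ranges of $a,b,c$ (small values where a "pendant path of length $a$" collapses), but since the parametrization forces $a,b,c\ge 128-\text{something}$ large, one only needs the generic structural description, which simplifies matters considerably.
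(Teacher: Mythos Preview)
The paper is a survey and does not itself prove this theorem; it only states the result and refers to the series \cite{KPS4.5}, \cite{KPS5}, \cite{KPS1}--\cite{KPS4} (and the exposition \cite{KS_chapter}) for the argument. So there is no in-paper proof to set your proposal against.

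That said, your plan is essentially the route taken in those cited papers: express both $W(H_{a,b,c})$ and $W(L^3(H_{a,b,c}))$ as explicit cubic polynomials in the segment lengths $a,b,c$ (the remaining two segments of $H_{a,b,c}$ having fixed lengths $1$ and $2$), observe that the degree-$3$ parts cancel so that $\Delta(a,b,c)=W(L^3(H_{a,b,c}))-W(H_{a,b,c})$ is quadratic, and then verify by direct substitution that the stated parametrisation annihilates $\Delta$ identically in $j,k$. One small correction: the cubic part of $W(H_{a,b,c})$ is not $\tfrac16(a+b+c)^3$ but $\tfrac16(a+b+c)^3-abc$ (equivalently $\tfrac16(a^3+b^3+c^3)+\tfrac12\sum_{i\ne j}a_i^2a_j$), as the edge-cut formula of Theorem~\ref{W.th} shows; the same cubic form reappears for $W(L^3(H_{a,b,c}))$ because the three long pendant paths survive with lengths shifted only by additive constants, so the cancellation you rely on still holds. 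Your diagnosis that the structural description of $L^2(H)$ and $L^3(H)$ is the main obstacle is accurate: in the original work this is not done ad hoc for $H_{a,b,c}$ but via a general calculus for $W(L^i(T))$ in terms of the segment decomposition of an arbitrary tree $T$, developed across several of the cited articles, and it is that machinery which turns the ``finite symbolic computation'' at the end into something manageable.
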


Let $\ell\in\{j,k,j+k\}$.
Since for every integers $j$ and $k$ the inequality $3j^2+3k^2-3jk+\ell\ge 0$ holds, we see that
$a,b,c\ge 128$ in Theorem~{\ref{thm:H}}.
Therefore, the smallest graph satisfying the assumptions is $H_{128,128,128}$
on $388$ vertices, obtained when $j=k=0$.
If we take in mind that there are approximately $7.5\cdot 10^{175}$ non-isomorphic trees
on $388$ vertices while the number of atoms in the entire Universe is estimated to be only
within the range of $10^{78}$ to $10^{82}$, then to find ``a needle in a haystack"
is trivially easy job compared to finding a counterexample when using only the brute
force of (arbitrarily many) real computers.

The following theorem gives a complete answer to
problem~({\ref{prob:ite_target}}).

\begin{theorem}
\label{thm:H3}
Let $T$ be a tree and $i\ge 3$.
Then the equation $W(L^i(T))=W(T)$ has a solution if and only if $i=3$ and
$G$ is of type $H_{a,b,c}$ as stated in Theorem~{\ref{thm:H}}.
\end{theorem}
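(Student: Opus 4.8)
The reverse implication is immediate from Theorem~\ref{thm:H}: for every choice of integers $j,k$ the resulting tree $H_{a,b,c}$ satisfies $W(L^3(H_{a,b,c}))=W(H_{a,b,c})$, and $j=k=0$ already produces such a tree (namely $H_{128,128,128}$), so a solution with $i\ge 3$ genuinely exists. The whole content is therefore the forward implication: that no solution occurs for $i\ge 4$, and that the only trees solving $W(L^3(T))=W(T)$ are those of type $H_{a,b,c}$ with $(a,b,c)$ of the stated parametric form.

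For the forward direction I would first strip away the degenerate cases. If $T=P_n$ is a path, then $L^i(P_n)=P_{n-i}$ and $W(P_{n-i})=\binom{n-i+1}{3}<\binom{n+1}{3}=W(P_n)$ for all $i\ge 1$, so no path qualifies; the trivial trees and $T=K_{1,3}$ (where $L(K_{1,3})=C_3$, hence $W(L^i(T))=3\ne 9=W(T)$) are excluded by inspection. So from now on $T$ has a vertex of degree at least $3$ and $T\not\cong K_{1,3}$. For such $T$ it is classical (van Rooij and Wilf) that $|V(L^i(T))|\to\infty$, hence $W(L^i(T))\to\infty$; combined with Theorem~\ref{thm:1}, which applies once the pendant paths of $T$ have shortened away and then forces $W(L^{i+1}(T))>W(L^i(T))$ (as $L^i(T)$ is never a cycle), this shows that for each fixed $T$ the equation $W(L^i(T))=W(T)$ has only finitely many solutions in $i$. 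The catch is that ``once the pendant paths have shortened away'' can take as many steps as the longest pendant path of $T$, and the solutions $H_{a,b,c}$ have pendant paths of length at least $128$, so the relevant index $i=3$ sits deep inside this transient. Hence the monotonicity only tames the tail; the transient must be attacked directly.

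The engine of the proof is an explicit handle on $W(L^k(T))$ for $k\in\{1,2,3\}$. One uses that the line graph of a tree is a block graph whose blocks are the cliques $K_{d(v)}$, one for each non-leaf vertex $v$ of $T$, arranged in a tree pattern, and that this structure propagates in a controlled way to $L^2(T)$ and $L^3(T)$: branch vertices persist, internal and pendant paths shift their length by one at each iteration, and the gadgets replacing the branch vertices can be written down. Expanding the Wiener index of such graphs yields $W(L^k(T))-W(T)$ as a polynomial in the path-length parameters of $T$ whose coefficients are built from $\binom{d(v)}{2}$-type quantities and whose constant term records the shape of the tree of branch vertices --- the $k=2$ instance for a single branch vertex is exactly the formula of Theorem~\ref{thm:DM}. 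The plan is then a case analysis on the branch structure: one branch vertex (generalized $t$-stars), where the $L^3$-analogue of the computation behind Theorem~\ref{thm:DM} shows that no generalized $t$-star satisfies $W(L^3(S))=W(S)$; two branch vertices, where every configuration other than two vertices of degree exactly $3$ joined by a path of length exactly $2$ is shown to force $W(L^3(T))\neq W(T)$; and three or more branch vertices, or any branch vertex of degree at least $4$, which are excluded because the degree- and shape-dependent part of $W(L^3(T))-W(T)$ then has a definite sign that the path-length part cannot overcome. What survives for $i=3$ is the shape underlying $H_{a,b,c}$, and there $W(L^3(T))=W(T)$ becomes a single polynomial Diophantine equation in the pendant-path lengths whose positive-integer solutions are exactly the two-parameter family $a(j,k),b(j,k),c(j,k)$ of Theorem~\ref{thm:H} (one of the pendant paths turning out to have length $1$). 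The same expansion, applied to $W(L^i(T))$ with $i\ge 4$ over all branch structures, is shown to stay bounded away from $W(T)$, which disposes of the case $i\ge 4$.

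The main obstacle is twofold. First is the bookkeeping: although line graphs of trees are block graphs, after two or three iterations one must track simultaneously the sizes of all the cliques, the lengths of all the internal and pendant paths, and the evolving pattern of branch vertices, and then sum distances over this structure; the closed form for $W(L^3(T))$ is not short. Second, and more delicate, is sharpness: to exclude \emph{every} near-miss one needs inequalities tight enough to see that, apart from the $H_{a,b,c}$ window at $i=3$, the quantity $W(L^i(T))-W(T)$ is strictly of one sign, and that sign is not uniform --- generalized $3$-stars have $W(L^i(T))<W(T)$ throughout the transient while generalized $t$-stars with $t\ge 7$ have $W(L^i(T))>W(T)$ --- so one cannot get away with a single monotonicity or positivity estimate but must control the sign case by case, including near the end of the transient where pendant paths are very short. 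Carrying out the Diophantine step, i.e.\ proving that the displayed $(j,k)$-family really exhausts the integer solutions of the $H_{a,b,c}$ equation, is the final technical hurdle.
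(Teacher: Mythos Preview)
The paper itself does not prove Theorem~\ref{thm:H3}: this is a survey, and the theorem is merely stated as the outcome of a sequence of six separate articles (the references \cite{KPS4.5}, \cite{KPS5}, \cite{KPS1}--\cite{KPS4}), with no argument given here beyond the definition of $H_{a,b,c}$ and the remark about the size of $H_{128,128,128}$. So there is no in-paper proof to compare against; one can only judge your outline against the overall strategy of that series, insofar as it can be read off from the survey text and the titles of the cited papers.

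At that level your sketch is broadly faithful. The cited papers do proceed by homeomorphism type of $T$: two of them are devoted specifically to trees homeomorphic to the claw $K_{1,3}$ and to the $H$-shape, and the last two settle the equation $W(L^3(T))=W(T)$ completely. Your identification of Theorem~\ref{thm:DM} as the template formula, the case split on the number and degrees of branching vertices, and the final Diophantine step on the $H$-shape all match the structure of those works.

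Where your plan is weakest is the handling of $i\ge 4$. You write that ``the same expansion, applied to $W(L^i(T))$ with $i\ge 4$ over all branch structures, is shown to stay bounded away from $W(T)$'', but this cannot be carried out as stated: once $T$ has a vertex of degree $\ge 3$, already $L^2(T)$ contains triangles, and after one or two more iterations the resulting graphs are no longer block graphs with a tree pattern, so there is no closed-form polynomial in the segment lengths analogous to~\eqref{eq:general} for general $i$. The route actually taken in the cited papers is the opposite of what you describe: one first establishes growth-type lower bounds showing that $W(L^i(T))>W(T)$ holds for \emph{all} $i\ge 3$ whenever $T$ falls outside a short explicit list of homeomorphism types (essentially trees homeomorphic to $K_{1,3}$ and to the $H$-shape), and only afterwards computes $W(L^i(T))$ explicitly on those few surviving shapes. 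Thus the case $i\ge 4$ is not dispatched by a separate expansion at the end; it is absorbed into the reduction step at the beginning. As written, your final paragraph has the two stages in the wrong order, and the $i\ge 4$ argument would not go through.
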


We conclude this section with the following problem.

\begin{problem}
Find all graphs (with cycles) $G$ and powers $i$ for which
\begin{equation}
\label{eq:iter}
  W(L^i(G))=W(G).
\end{equation}
\end{problem}

For $i=1$ the above problem is very rich with many different solutions,
so probably it will not be possible to find all of them.
But still, stating it as a problem could serve as a motivation for searching
of various graph classes that satisfy the equation.
However, we want to emphasize the case $i\ge 2$.
In this case the problem is still rich with many solutions, particularly
among the trees, but abandoning the class of trees can reduce the solutions
significantly.
At the moment, cycles are the only known cyclic graphs $G$ for which
$W(L^i(G))=W(G)$ holds for some $i\ge 3$ and we believe that there are no
other cyclic graphs satisfying (\ref{eq:iter}). This was conjectured independently in
\cite{DM3} and  \cite{KS_chapter}.

\begin{conjecture}
Let $i\ge 3$.
There is no graph $G$, distinct from a cycle and a tree, such that
$$
  W(L^i(G))=W(G).
$$
\end{conjecture}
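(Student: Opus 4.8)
\emph{Plan of proof.} The plan is to split on whether $G$ has a vertex of degree one. Throughout, $G$ is connected, not a tree and not a cycle, and we must show $W(L^{i}(G))\neq W(G)$ for every $i\ge 3$. Two elementary facts drive the argument. First, if $\delta(G)\ge 2$ then the vertex of $L(G)$ representing an edge $uv$ has degree $\deg u+\deg v-2\ge 2$, so minimum degree at least $2$ is inherited by every iterated line graph. Second, writing $\lambda$ for the cyclomatic number, one computes $\lambda(L(H))-\lambda(H)=\sum_{v\in V(H)}\binom{\deg_{H}v-1}{2}$, which is nonnegative and is strictly positive as soon as $H$ has a vertex of degree at least $3$. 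Since $G$ is connected, contains a cycle and is not a cycle, it has such a vertex, so $\lambda(L(G))\ge\lambda(G)+1\ge 2$; hence $|V(L^{m}(G))|=|V(L^{m-1}(G))|+\lambda(L^{m-1}(G))-1$ is strictly increasing for $m\ge 2$, and $W(L^{m}(G))\ge\binom{|V(L^{m}(G))|}{2}\to\infty$.

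\smallskip

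If $\delta(G)\ge 2$ the conclusion is immediate: Theorem~\ref{thm:1}, applied along the chain $G,L(G),L^{2}(G),\dots$, gives $W(G)\le W(L(G))\le\cdots\le W(L^{i}(G))$, and if the first and last terms coincide then so do all of them, in particular $W(L(G))=W(G)$, which by Theorem~\ref{thm:1} forces $G$ to be a cycle, a contradiction. (This settles the statement for all $i\ge 1$ in this case.)

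\smallskip

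Assume now $\delta(G)=1$. A maximal pendant path of length $\ell$ in a graph $H$ becomes, in $L(H)$, a maximal pendant path of length $\ell-1$ --- a pendant path of length $1$ being absorbed into the clique on the edges at its branch vertex --- and, inductively, every pendant path of $L^{m}(G)$ is a truncation of one of $G$; so if $p$ is the largest pendant-path length of $G$, then $L^{p}(G)$ has minimum degree at least $2$, while $\lambda(L^{p}(G))\ge 2$ by the computation above. Applying the first paragraph to $H=L^{p}(G)$, which is not a cycle, the tail $W(L^{p}(G))<W(L^{p+1}(G))<\cdots$ is strictly increasing and tends to infinity. What remains is to establish: (b) a quantitative strengthening of Theorem~\ref{thm:1}, namely a lower bound $W(L(H))\ge W(H)+c(H)$ for connected $H$ with $\delta(H)\ge 2$ and $H$ not a cycle, with $c(H)$ bounded below in terms of $|V(H)|$ and $\lambda(H)$; and (a) that $W(L^{i}(G))\neq W(G)$ for the remaining indices $3\le i\le m_{1}$, where $m_{1}=m_{1}(G)$ is the index beyond which (b) applies. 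Granting (b), since $|V(L^{m}(G))|$ and $\lambda(L^{m}(G))$ grow without bound, the successive gaps $W(L^{m+1}(G))-W(L^{m}(G))$ exceed $W(G)$ for all $m\ge m_{1}$; as each term is nonnegative, this yields $W(L^{i}(G))>W(G)$ for every $i>m_{1}$, leaving only the bounded range of (a) to be handled directly.

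\smallskip

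Part (a) should be accessible in the spirit of the existing classifications of small unicyclic and bicyclic graphs with $W(L(G))=W(G)$: while $L^{m}(G)$ still carries pendant structure, a line-graph step loses a term of order $\binom{|V|}{2}$, exactly as in Buckley's Theorem~\ref{Buckley.thm}, and this loss dominates the gain coming from the cyclic part, so one expects $W(L^{i}(G))<W(G)$ throughout this regime. The genuine obstacle is (b): one needs a uniform lower bound on how much a line-graph step raises the Wiener index of a pendant-free non-cyclic graph, strong enough to grow with the graph and thereby make the gaps outrun $W(G)$. Controlling $W(L(H))-W(H)$ from below in terms of the size and cyclomatic number of $H$, beyond the qualitative $W(L(H))\ge W(H)$ of Theorem~\ref{thm:1}, is the heart of the matter, and is presumably why the statement remains a conjecture.
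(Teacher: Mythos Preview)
The statement is presented in the paper as an open \emph{conjecture}, not a theorem; the paper offers no proof, only the remark that cycles are the sole known cyclic examples and that the assertion was conjectured independently in two sources. So there is no paper proof to compare your attempt against.

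That said, your proposal is honest about its own status: you do not claim a full proof, and you correctly identify where the argument breaks down. Your treatment of the case $\delta(G)\ge 2$ is clean and complete --- the observation that minimum degree $\ge 2$ propagates along the line-graph iteration, combined with Theorem~\ref{thm:1} and its equality characterisation, does settle the conjecture in that case for all $i\ge 1$. This is a genuine partial result, and the paper does not isolate it explicitly.

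For $\delta(G)=1$ your structural remarks (pendant paths shorten by one under $L$, cyclomatic number is non-decreasing and eventually strictly increasing, the tail sequence is eventually strictly increasing to infinity) are all correct. But the two pieces you label (a) and (b) are not proofs: (a) is a heuristic expectation that the Buckley-type loss dominates while pendant structure persists, and (b) asks for a quantitative gap estimate $W(L(H))-W(H)\ge c(|V(H)|,\lambda(H))$ that is not available in the literature. Even granting both, there is still a delicate point you gloss over: once the sequence $W(L^{m}(G))$ becomes strictly increasing from some index $p$ onward, you know it crosses $W(G)$ at most once, but you have not excluded that it hits $W(G)$ exactly at some $i\ge 3$ during the transition between the ``decreasing'' regime of (a) and the ``increasing'' tail. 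Your own closing sentence acknowledges that (b) is the crux and is ``presumably why the statement remains a conjecture'' --- which is an accurate assessment of the state of affairs.
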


%
%
%
%
%

\section{Excursion into digraphs}

In~\cite{KST, KST2}, we have considered  the Wiener index of not
necessarily strongly connected digraphs. In order to do so, if in a
digraph there is no directed path from a vertex $u$ to a vertex $v$,
we follow the convention that \begin{equation} \label{zero}
d(u,v)=0,
\end{equation}
which was independently introduced in several studies of directed networks.

A counterpart of the Wiener theorem for {\em directed
trees}, i.e. digraphs whose underlying graphs are trees can be stated in this way.

\begin{theorem} \label{win} Let $T$ be a directed tree with the arc set $A(T)$.
Then $$W(T)=\displaystyle \sum_{ab \in A(T)}t(a)s(b),$$
where $t(a)$ denotes the number of vertices reachable from $a$, and $s(b)$ denotes
the number of vertices that reach $b$.
\end{theorem}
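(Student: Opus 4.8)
The plan is to carry out the directed analogue of Wiener's edge-contribution formula~(\ref{W.eq2}): each arc of $T$ should be weighted by a product of two reachability counts, one ``upstream'' and one ``downstream''. Start from $W(T)=\sum_{(u,v)}d(u,v)$, the sum taken over ordered pairs of distinct vertices, where by convention~(\ref{zero}) $d(u,v)=0$ whenever $v$ is not reachable from $u$. If $v$ \emph{is} reachable from $u$, then --- since the underlying graph of $T$ is a tree --- the only $u$--$v$ walk available is the unique $u$--$v$ path of the underlying tree, so this path is directed and $d(u,v)$ equals the number of arcs it traverses. Swapping the order of summation yields
$$
W(T)=\sum_{(u,v)}\bigl|\{e\in A(T):e\text{ lies on the directed }u\text{--}v\text{ path}\}\bigr|=\sum_{e\in A(T)}c(e),
$$
where $c(e)$ is the number of ordered pairs whose (existing) directed path passes through $e$.

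The crux is to compute $c(e)$ for a fixed arc $e$ with tail $p$ and head $q$. Deleting $e$ from the underlying tree splits it into two subtrees $T_p\ni p$ and $T_q\ni q$, joined in $T$ only by $e$, which is oriented from $T_p$ towards $T_q$. A cut argument then shows that no directed walk can cross from $T_q$ back into $T_p$; consequently every vertex that can reach $p$ lies in $T_p$, and every vertex reachable from $q$ lies in $T_q$. Hence a directed $u$--$v$ path passes through $e$ if and only if it consists of a directed path from $u$ to $p$, followed by $e$, followed by a directed path from $q$ to $v$ --- that is, precisely when $u$ can reach $p$ (allowing $u=p$) and $v$ is reachable from $q$ (allowing $v=q$). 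These two choices are independent, and since the relevant vertices lie in the disjoint sets $T_p$ and $T_q$ they can never coincide, so $c(e)=s(p)\,t(q)$, where $s(p)$ is the number of vertices that reach $p$ (counting $p$) and $t(q)$ is the number reachable from $q$ (counting $q$). Summing over $A(T)$ gives $W(T)=\sum_{e\in A(T)}s(\mathrm{tail}(e))\,t(\mathrm{head}(e))$, which is the claimed identity.

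I do not expect a substantive obstacle --- the whole argument is short --- but the point requiring care is exactly this orientation bookkeeping. In the undirected case \emph{both} components of $T-e$ contribute their full sizes; here only the upstream reachability set of the tail and the downstream reachability set of the head count, and one must check that neither set leaks across the single arc $e$, so that these counts, taken globally, agree with the counts taken inside $T_p$ and $T_q$. One should also verify that the boundary vertices $p$ and $q$ are each included exactly once and that $u\neq v$ is automatic. As sanity checks, the identity returns $\binom{n+1}{3}$ on a directed path (as in the undirected case) and $n-1$ on an oriented star.
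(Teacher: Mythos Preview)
The paper merely \emph{states} this theorem (citing \cite{KST,KST2}) and gives no proof, so there is nothing to compare your approach against. Your argument --- rewriting $W(T)$ as a sum over arcs of the number of ordered reachable pairs whose unique directed path uses that arc, then factoring that count via the cut induced by the arc --- is the natural directed analogue of Wiener's edge-contribution formula and is carried out correctly.

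There is one point you should not gloss over. What you actually prove is
\[
W(T)=\sum_{e\in A(T)} s(\mathrm{tail}(e))\,t(\mathrm{head}(e)),
\]
i.e.\ for an arc $a\to b$ the contribution is $s(a)\,t(b)$. The theorem as printed has $t(a)\,s(b)$. These are \emph{not} the same; on the directed path $1\to 2\to 3$ your formula gives $1\cdot 2+2\cdot 1=4=W$, while $t(a)s(b)$ gives $3\cdot 2+2\cdot 3=12$. So either the paper is using the (nonstandard) convention that ``$ab\in A(T)$'' denotes the arc with head $a$ and tail $b$, or the roles of $t$ and $s$ are transposed in the displayed formula. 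Your final sentence ``which is the claimed identity'' hides this; you should instead state explicitly which convention makes the two expressions agree, or note the apparent misprint.
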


 Here we give a counterpart of a relation between the Wiener index and betweenness centrality $B(x)$ for oriented graphs.

\begin{theorem} For any digraph $D$ of order $n$
$$W(D) = \sum_{x \in
V(D)}B(x) + p(D),$$
where $p(D)$ denotes the number of ordered pairs $(u,v)$ such that there
exists a directed path from $u$ to $v$ in $D$.
\end{theorem}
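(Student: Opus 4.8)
The plan is to unwind the definition of betweenness centrality and compare it term by term with the definition of the Wiener index, using the convention (\ref{zero}). Recall that for a vertex $x$, the (directed) betweenness centrality $B(x)$ counts, over all ordered pairs $(u,v)$ with $u\ne v$ and $u,v\ne x$, the fraction of shortest directed $u$–$v$ paths that pass through $x$; if there is no directed path from $u$ to $v$, that pair contributes $0$. First I would fix an ordered pair $(u,v)$ with $u\ne v$ for which a directed $u$–$v$ path exists, and let $\ell=d(u,v)\ge 1$ be its length. The key combinatorial observation is that every shortest directed path from $u$ to $v$ has exactly $\ell-1$ internal vertices, so summing the ``fraction of shortest $u$–$v$ paths through $x$'' over all $x\in V(D)\setminus\{u,v\}$ gives exactly $\ell-1$ (the average is taken over a fixed family of paths, each of which contributes $\ell-1$ internal vertices). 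Hence the total contribution of the pair $(u,v)$ to $\sum_{x}B(x)$ is $d(u,v)-1$.

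Next I would sum this identity over all ordered pairs $(u,v)$, $u \ne v$, for which a directed path from $u$ to $v$ exists. By definition of $p(D)$ there are exactly $p(D)$ such pairs, so
$$
\sum_{x\in V(D)}B(x)=\sum_{\substack{(u,v):\, u\to v}}\bigl(d(u,v)-1\bigr)=\Bigl(\sum_{\substack{(u,v):\, u\to v}}d(u,v)\Bigr)-p(D).
$$
It remains to identify the sum $\sum_{(u,v):\,u\to v}d(u,v)$ with $W(D)$. This is immediate from the convention (\ref{zero}): for an ordered pair $(u,v)$ with no directed path we have $d(u,v)=0$, so those pairs contribute nothing, and $W(D)=\sum_{(u,v)}d(u,v)$ (summing over all ordered pairs, or equivalently over pairs joined by a directed path) coincides with $\sum_{(u,v):\,u\to v}d(u,v)$. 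Substituting gives $\sum_{x}B(x)=W(D)-p(D)$, which rearranges to the claimed formula.

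The only genuinely delicate point is the normalization in the definition of $B(x)$: one must be careful that betweenness is defined as a sum of \emph{fractions} $\sigma_{uv}(x)/\sigma_{uv}$ (number of shortest $u$–$v$ paths through $x$, divided by the total number of shortest $u$–$v$ paths), so that even when there are many shortest paths the per-pair contribution to $\sum_x B(x)$ collapses to the clean quantity $d(u,v)-1$ rather than something depending on $\sigma_{uv}$. Once the bookkeeping of ordered versus unordered pairs and the role of convention (\ref{zero}) are pinned down, the argument is a one-line swap of summation order; I expect this to be the main place where a reader could stumble, so I would state the definition of $B(x)$ explicitly at the start of the proof. \Qed
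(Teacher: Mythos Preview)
Your argument is correct. The paper itself states this theorem without proof (it is attributed to \cite{KST,KST2}), so there is nothing to compare against; your double-counting approach---fixing a reachable ordered pair $(u,v)$, observing that $\sum_{x\notin\{u,v\}}\sigma_{uv}(x)/\sigma_{uv}=d(u,v)-1$ because each of the $\sigma_{uv}$ shortest paths contributes exactly $d(u,v)-1$ internal vertices, and then summing over the $p(D)$ reachable pairs---is the standard and expected proof of this identity.
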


\noindent The above result implies that for strongly connected digraph $D$ on $n$ vertices, we have the relation
    $$W(D) = \sum_{x \in V(D)}B(x) + 2\binom{n}{2}.$$

Let $W_{\max}(G)$ and $W_{\min}(G)$ be the maximum possible and the minimum
possible, respectively, Wiener index among all digraphs
obtained by orienting the edges of a graph $G$.

\begin{problem}
For a given graph $G$ find  $W_{\max}(G)$ and $W_{\min}(G)$.
\end{problem}

The above problem has been considered for strongly connected
orientations. Plesn{\'\i}k \cite{P} proved that finding a strongly
connected orientation of a given graph $G$ that minimizes the Wiener
index is NP-hard. Regarding the problem of finding $W_{\max}(G)$,
Plesn{\'\i}k and Moon \cite{moon,P} resolved it for complete graphs,
under the assumption that the orientation is strongly connected.

We showed~\cite{KST} that the above mentioned results of Plesn{\'\i}k and Moon hold also
for non-strongly connected orientations assuming the condition (\ref{zero}).
One may expect that {\em for a 2-connected graph $G$, $W_{\max}(G)$ is attained
for some strongly connected orientation}. However, this is not the case as
we proved by $\Theta$-graphs $\Theta_{a,b,1}$ for $a$ and $b$ fulfilling certain conditions. By $\Theta_{a,b,c}$ we denote a graph obtained when
two distinct vertices are connected by three
internally-vertex-disjoint paths of lengths $a+1$, $b+1$ and $c+1$,
respectively. We assume $a\geq b\geq c$ and $b\geq 1$.
The orientation of $\Theta_{a,b,c}$ which achieves the maximum Wiener index is not
strongly connected if $c\ge 1$.
However, we believe that the following holds.

\begin{conjecture}
Let $a\geq b \geq c$.
Then $W_{\max}(\Theta_{a,b,c})$ is attained by an orientation of
$\Theta_{a,b,c}$ in which the union of the paths of lengths $a+1$
and $b+1$ forms a directed cycle.
\end{conjecture}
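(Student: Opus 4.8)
The plan is to parametrize orientations of $\Theta_{a,b,c}$ by how each of the three internally-disjoint paths is directed, compute $W(D)$ for each pattern as a function of $a,b,c$, and show that the global maximum occurs for a pattern in which the $a$-path and $b$-path together form a directed cycle. Label the two branch vertices $x$ and $y$, and call the three $x$--$y$ paths $P_A$, $P_B$, $P_C$ of lengths $a+1$, $b+1$, $c+1$. Each path is oriented either consistently from $x$ to $y$, consistently from $y$ to $x$, or ``broken'' (changes direction at least once along its interior). First I would observe that if any path is broken, then many vertices on it are unreachable from, or cannot reach, much of the rest of the graph, and using convention~(\ref{zero}) one shows such orientations are dominated: the contribution $d(u,v)$ is zero for too many ordered pairs. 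So it suffices to restrict to the $2^3 = 8$ orientations in which every path is directed consistently, and up to the symmetry swapping $x\leftrightarrow y$ (which reverses all arcs and preserves $W$ by~(\ref{zero}), since $d(u,v)$ summed over unordered pairs is unchanged) there are only four essentially distinct cases.

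Next I would enumerate those four cases. (i) All three paths directed $x\to y$: then $y$ reaches nothing, no vertex is reachable from $y$, and the digraph is acyclic, so distances are small and $W$ is comparatively tiny. (ii) Two paths $x\to y$, one $y\to x$: this creates exactly one directed cycle (through the $y\to x$ path and one of the others), the rest hangs off it; the third path carries a ``one-way'' contribution. (iii) One path $x\to y$, two paths $y\to x$: symmetric to (ii) under $x\leftrightarrow y$. (iv) The configuration where, say, $P_A$ is directed $x\to y$ and $P_B$ is directed $y\to x$, so $P_A\cup P_B$ is a directed cycle of length $a+b+2$, and $P_C$ is directed either way — these are the candidate maximizers. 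For each surviving case I would write $W$ as a sum of three blocks: pairs with both endpoints on the directed cycle (a cyclic-distance sum, computable in closed form as for $W(C_n)$), pairs with one endpoint on the cycle and one on the hanging/third path, and pairs within the third path. The key comparison is then: which pair $\{$ longest cycle, placement of the remaining $c+1$ path $\}$ maximizes the total. Because distance sums along a directed cycle grow quadratically in its length while a ``pendant'' directed path contributes only linearly-in-position terms, pairing the two \emph{longest} branches $P_A$ and $P_B$ into the cycle beats pairing a short branch into it; this is where the hypothesis $a\ge b\ge c$ is used.

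The main obstacle I anticipate is case (ii)/(iii) versus case (iv): in case (ii) one still gets a directed cycle, but of length $a+b+2$ or $a+c+2$ depending on which two $x\to y$ paths are chosen, and the ``third'' path there is traversed one-way rather than sitting as a pendant — so its vertices \emph{do} reach, or are reached by, the whole cycle, giving a genuinely different (and not obviously smaller) distance profile than case (iv). Disentangling these requires carefully expanding both $W$ expressions and showing the case-(iv) value with cycle $P_A\cup P_B$ dominates for all admissible $a\ge b\ge c$ with $b\ge 1$; I expect this reduces to verifying a polynomial inequality in $a,b,c$ that is manifestly nonnegative once the right terms are grouped, but checking that no small-parameter exceptions occur (and handling the degenerate subcase $c=0$, where $P_C$ is a single edge) is the delicate part. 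A secondary subtlety is confirming that a \emph{broken} path is always strictly dominated rather than merely weakly so; here I would argue that rerouting a broken path to be consistently directed never decreases any $d(u,v)$ and strictly increases at least one, invoking convention~(\ref{zero}) to rule out the pathological ties.
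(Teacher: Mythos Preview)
The statement you are trying to prove is listed in the paper as a \emph{conjecture}; the authors do not give a proof, so there is nothing in the paper to compare your argument against.

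More importantly, your central reduction step is wrong. You claim that any orientation in which some $P_i$ is ``broken'' (not consistently directed) is dominated by one in which all three paths are consistently directed, and you then restrict to the eight consistent orientations. But the paper itself records (just before the conjecture) that for $\Theta_{a,b,1}$ with suitable $a,b$ the orientation achieving $W_{\max}$ is \emph{not} strongly connected. Now observe that every orientation of $\Theta_{a,b,c}$ in which all three paths are consistently directed and not all in the same direction \emph{is} strongly connected, while the all-same-direction case is acyclic and tiny. Hence the non-strongly-connected maximiser must have at least one broken path --- precisely what you ruled out.

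The reason your intuition fails is this: if $P_A\cup P_B$ is a long directed cycle and $P_C$ is consistently directed (say $x\to y$), then $P_C$ furnishes a shortcut of length $c+1$ between $x$ and $y$ inside that cycle, collapsing a great many pairwise distances on the cycle from order $a+b$ down to order $c$. Breaking $P_C$ (putting a sink or source in its interior) destroys the shortcut; you lose only the contributions of ordered pairs involving the few interior vertices of $P_C$, while you restore the full cyclic distances for the many pairs on $P_A\cup P_B$. For $a,b$ large relative to $c$ this trade-off favours the broken orientation. So the case analysis must include broken configurations of $P_C$ (and, a priori, of $P_A$ and $P_B$), and the conjecture is genuinely open.
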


Analogous results as for $\Theta$-graphs, stating that the orientation of a graph which achieves
the maximum Wiener index is not strongly connected, can probably be proved
also for other graphs which are not very dense and which admit an orientation
with one huge directed cycle without ``shortcuts", that is without directed
paths shortening the cycle. On the other hand, we were not able to find examples without long induced cycles
that makes us wonder if the following holds.

\begin{conjecture}
Let $G$ be a 2-connected chordal graph. Then $W_{\max}(G)$ is attained by an orientation
which is strongly connected.
\end{conjecture}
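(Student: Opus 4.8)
The plan is to establish this in two stages: first verify the claim for the "smallest" chordal $2$-connected graphs under consideration, and then extend it by a local modification argument. Let $G$ be a $2$-connected chordal graph with an optimal orientation $\vec G$ realizing $W_{\max}(G)$. Using convention~(\ref{zero}), $W(\vec G)=\sum_{(u,v)}d_{\vec G}(u,v)$ over ordered pairs with a directed $u\!\to\!v$ path, so increasing reachability and increasing distances both help. The first step is to recall the structure theorem for chordal graphs: every $2$-connected chordal graph has a clique tree, and (unless $G$ is a single clique, where the statement is Moon's theorem) it has at least one minimal vertex separator that is a clique $S$ separating $G$ into pieces $G_1,\dots,G_t$ glued along $S$. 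The idea is to argue that in an optimal orientation one may assume each clique $S$ is oriented transitively (a tournament that is a total order), since reorienting a clique's internal arcs never destroys reachability between its endpoints and can only be improved by making it acyclic — here one uses that inside a clique every pair is adjacent, so all internal distances are $1$ regardless, but the acyclic choice maximizes the number of ordered reachable pairs passing \emph{through} the clique when combined with the rest.

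The core step is then a ``reachability is free on chordal graphs'' argument: I would show that for a $2$-connected chordal graph one can orient $G$ so that it is strongly connected while \emph{simultaneously} keeping all pairwise distances as large as in any optimal orientation. Concretely, take the optimal orientation $\vec G$; if it is not strongly connected, its condensation is a DAG with at least two strongly connected components, and because $G$ is $2$-connected there are at least two vertex-disjoint $S$-paths between the ``source side'' and ``sink side'' of any separating clique. The key chordality input is that short chords prevent the existence of the long induced cycles that (as the $\Theta$-graph examples show) are exactly what force an optimal orientation to be non-strongly-connected: in a chordal graph every cycle of length $\ge 4$ has a chord, so any directed ``detour'' one might want to keep one-way can instead be short-circuited by an arc whose orientation we are free to choose, and reversing a carefully chosen arc on a triangle restores strong connectivity while changing no distance (since the two endpoints of that arc are at distance $1$ in both directions through the triangle's apex). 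Iterating this reversal over a suitable set of triangles — one per ``bad'' edge in a chosen ear/clique-tree decomposition — produces a strongly connected orientation $\vec G'$ with $W(\vec G')\ge W(\vec G)=W_{\max}(G)$, forcing equality.

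The main obstacle I anticipate is making the phrase ``reversing a triangle arc restores strong connectivity without decreasing any distance'' precise and global rather than merely local: a single reversal may fix one cut but create another, and distances between \emph{other} pairs could in principle drop even when the two endpoints of the reversed arc are unaffected. To control this I would set up an induction on the number of strong components (or on $|E(G)|-|V(G)|$) using the clique-tree decomposition: peel off a leaf clique $L$ of the clique tree attached along a separating clique $S$, apply the inductive hypothesis to the (still $2$-connected, still chordal) contraction or to $G-(L\setminus S)$, and then re-attach $L$ with a transitive orientation chosen so that $S$ is entered and exited consistently with the strongly connected orientation already built on the rest — strong connectivity of the whole then follows from strong connectivity of both parts plus the two-arc ``in and out'' of $S$, and no distance decreases because every new arc sits inside a clique. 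The delicate point that will need real care is the base of the clique tree and the case where $S$ has size $1$ (a cut vertex), which cannot occur under $2$-connectivity — so in fact $2$-connectedness is used precisely to guarantee $|S|\ge 2$ and hence the two independent $S$-paths needed to route the cycle back. If this bookkeeping goes through, it simultaneously suggests that the hypothesis ``no long induced cycles'' in the preceding remark can be replaced by ``chordal,'' which is the natural scope of the conjecture.
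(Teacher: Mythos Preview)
The statement you are attempting to prove is listed in the paper as a \emph{conjecture}; the authors give no proof and present it as an open problem motivated by the $\Theta$-graph examples. So there is no ``paper's own proof'' to compare against, and what you have written is an attack on an open question rather than a reconstruction of a known argument.

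As for the attack itself, the honest self-assessment in your third paragraph is accurate: the step ``reverse a triangle arc to restore strong connectivity without decreasing any distance'' is exactly the content of the conjecture, and your sketch does not establish it. Two concrete issues. First, in the inductive re-attachment you say ``no distance decreases because every new arc sits inside a clique,'' but the distances that matter are those between $L\setminus S$ and $G\setminus L$, and these are governed by how the arcs \emph{crossing} $S$ compose with the orientation on the rest of $G$; that every arc of $L$ lies in a clique says nothing about those composite paths. Second, you propose to orient the leaf clique $L$ transitively and then invoke ``strong connectivity of both parts,'' but a transitive tournament is acyclic, hence not strongly connected, so the gluing lemma you want does not apply as stated. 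More broadly, the $\Theta$-graph counterexamples in the paper show that the trade-off between reachability (more ordered pairs contributing) and shortening (strong connectivity creates shortcuts) can genuinely go the wrong way; chordality rules out long \emph{induced} cycles, but you have not shown that it rules out the analogous shortening phenomenon when many triangles share edges. The clique-tree framework is a reasonable place to start, but at present the argument has no mechanism that actually bounds the distance loss incurred when you make the orientation strongly connected.
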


Finally, we wonder how hard it is to find $W_{\max}$ and $W_{\min}$.

\begin{problem}
For a given graph G, what is the complexity of finding
$W_{\max}(G)$  (resp. $W_{\min}(G)$)? Are these problems NP-hard?
\end{problem}

Consider also the following problem for the minimum value.

\begin{conjecture} For every graph $G$, the value $W_{\min}(G)$ is achieved for some acyclic orientation $G$.
\end{conjecture}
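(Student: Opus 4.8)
The natural approach is an exchange argument resting on a basic feature of convention~(\ref{zero}): destroying a directed path, without creating new ones, never increases $W$, since the affected term $d(x,y)$ merely drops, possibly to $0$. So acyclic orientations --- which one can arrange to have very few ordered reachable pairs, by creating many sources and sinks --- are natural candidates for the minimum, and the claim is immediate when $G$ is a forest, every orientation then being acyclic. For the general case, among all orientations of $G$ attaining $W_{\min}(G)$ fix one, $D$, for which $\sum_i|V(S_i)|^2$ is as small as possible, the sum running over the strongly connected components $S_i$ of $D$; the goal is to show this $D$ is acyclic.

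Suppose not, and let $S$ be a non-trivial strongly connected component of $D$. Form $D'$ from $D$ by replacing the orientation of $G[S]$ by an acyclic one and leaving every other arc untouched. A standard contraction/lifting argument, using that $D[S]$ is strongly connected, gives two facts: reorienting inside $S$ creates no directed cycle meeting $S$ --- such a cycle would survive the contraction of $S$ and pull a vertex outside $S$ into the component --- so $D'$ has no directed cycle through $S$ at all and $\sum_i|V(S_i)|^2$ is strictly smaller for $D'$; and every vertex's out-reachable set can only shrink when passing from $D$ to $D'$, so no new reachable pair is created and the \emph{only} danger is that some surviving distance grows. By the choice of $D$ it therefore suffices to pick the acyclic orientation of $G[S]$ so that $W(D')\le W(D)$. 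A good deal is gained for free: since $D[S]$ is strongly connected, all $|V(S)|\,(|V(S)|-1)$ ordered pairs inside $S$ are reachable in $D$, while in $D'$ only the pairs increasing along the chosen topological order of $G[S]$ can survive, so at least $\binom{|V(S)|}{2}$ ordered pairs become unreachable, each lowering $W$ by at least $1$; any pair whose $D$-routes through $S$ are all broken by the reorientation drops out too.

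That controlling the competing \emph{distance increases} is the true difficulty shows already at the simplest move. Reversing a single arc $u{\to}v$ on a directed cycle of $D$ creates no new reachable pair --- if $x$ reaches $y$ afterwards via the new arc $v{\to}u$, then in $D$ one has $x\rightsquigarrow v$, then $v\rightsquigarrow u$ along the rest of the cycle, then $u\rightsquigarrow y$ --- yet $W$ can strictly \emph{increase}: on the graph built from a triangle $uvw$ by joining a new vertex to both $u$ and $v$ and attaching a couple of pendant vertices at $u$ and at $v$, reversing the triangle-arc $u{\to}v$ forces many pairs onto long detours and raises $W$. So a proof must estimate $\sum_{(x,y)}d(x,y)$ globally, and that is the step I expect to be the main obstacle. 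The route I would try is a charging argument: using a shortest $x$--$y$ path of $D'$ and the strong connectivity of $D[S]$, assign to each ordered pair $(x,y)$ still connected in $D'$ a walk of $D$ of the same length, done consistently enough that, summed over all pairs, the arcs used are dominated by the arcs on a fixed system of shortest paths of $D$; equivalently, by the identity $W(D)=\sum_x B(x)+p(D)$ and the already-clear $p(D')\le p(D)$, it would suffice to show the betweenness sum of $D'$ is at most that of $D$. A secondary obstacle, interacting with the first, is \emph{which} acyclic orientation of $G[S]$ to use: a bipolar (single source, single sink) orientation is the natural guess when $S$ is entered and left at one vertex each, but several entrances and exits need more care, perhaps an orientation of $G[S]$ of small directed diameter.
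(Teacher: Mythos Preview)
The first thing to note is that the statement you are trying to prove is a \emph{conjecture} in the paper, not a theorem: the authors do not claim a proof of the general case. What the paper does establish is the special case of bipartite $G$, and by a direct construction rather than an exchange argument: orient every edge from one colour class to the other; the resulting digraph has every directed path of length exactly $1$, so $W(D)=|E(G)|$, which equals the obvious lower bound $W_{\min}(G)\ge |E(G)|$. There is therefore no general proof in the paper to compare your proposal against.

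As for your proposal itself, you are candid that it is not a proof but a programme, and you have correctly located the genuine gap. Your structural claims are sound: reorienting the induced subgraph on a non-trivial strong component $S$ acyclically cannot create new reachable ordered pairs (any $D'$-path through $S$ can be rerouted inside $S$ using the strong connectivity of $D[S]$), and cannot create a directed cycle meeting $S$ (such a cycle would, after contracting $S$, witness a vertex outside $S$ lying in the same strong component as $S$ already in $D$). What is missing, and what you flag yourself, is any control over the \emph{increase} in surviving distances: your triangle-with-pendants example shows that even a single arc reversal on a cycle can raise $W$, so the savings from destroyed pairs need not dominate the losses from detours. Neither the charging idea (matching $D'$-shortest paths to $D$-walks of the same length) nor the betweenness reformulation $W=\sum_x B(x)+p$ is pushed to an actual inequality, and there is no indication of which acyclic orientation of $G[S]$ would make such an inequality hold. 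Until that step is supplied, the argument remains a heuristic; the conjecture is, as far as the paper is concerned, open.
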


This is certainly true for bipartite graphs. Namely, by orienting
all edges of such a graph $G$ so that the corresponding arcs go from
one bipartition to the other, we obtain a digraph $D$ with
$W(D)=|E(G)|$. As obviously $W_{\min}(G)\ge |E(G)|$, this case is
established.

%
%
%
%
%

\section{Wiener index for disconnected graphs}

Since the formula (\ref{WienerIndex}) cannot be applied to non-connected
graphs, for these graphs we set
\begin{equation}
\label{eq:new2}
  W(G)=\sum_{\substack{\{x,y\}\subseteq V(G)\\ x-y {\rm \ path\ exists\ in\ }G}} d(x,y).
\end{equation}
In other words, we ignore pairs of vertices $x$ and $y$ for which the distance $d(u,v)$
can be considered as ``infinite" analogously as we ignored such pairs of
vertices in the case of digraphs. For example, in \cite{dobr-9},  the Wiener index has been used in quantitative studies of disconnected hexagonal networks.

Let $G$ be a disconnected graph with components $G_1$, $G_2$, \dots, $G_p$.
By (\ref{eq:new2}) we get
$$
W(G) = W(G_1) + W(G_2) + \dots + W(G_p) \,.
$$

It is interesting to study the problems from the previous
sections using the modified definition of Wiener index (\ref{eq:new2}).
Particularly, we find interesting the analogues of
Problems~\ref{prob:inv_gen} and~(\ref{prob:ite_target}).

\begin{problem}
For given $n$, find all values $w$ which are Wiener indices of
not necessarily connected graphs (forests) on $n$ vertices.
\end{problem}

Let $i\ge 3$.
From the proof of Theorem~{\ref{thm:H3}} one can see that most trees $T$
satisfy $W(L^i(T))>W(T)$, while paths on $n\ge 2$ vertices satisfy
$W(L^i(P_n))<W(P_n)$.
Hence, the following problem is interesting.

\begin{problem}
For $i\ge 3$, find all forests $F$ for which $W(L^i(F))=W(F)$.
\end{problem}

%
%
%
%
%

\section{Trees with given degree conditions}

Lin \cite{Lin2} characterized the trees which maximize and
minimize the Wiener index among all trees of given order that have only vertices
of odd degrees. An ordering of trees by their smallest Wiener indices for trees of given order that have only vertices
of odd degrees was obtained by Furtula, Gutman and Lin \cite{furt}. In \cite{furt2} Furtula further determined the trees
with the second up to seventeenth greatest Wiener indices. Lin \cite{Lin2} suggested analogous problems for general graphs.

\begin{problem}
Characterize the graphs with maximal Wiener index in the set of
graphs on $2n$ vertices whose vertices are all of odd degree, and in
the set of  graphs on $n$ vertices whose vertices are all of even
degree, respectively.
\end{problem}

In \cite{Lin3} Lin characterized the trees which minimize (maximize,
respectively) the Wiener index among all trees with given number of
vertices of even degree. He proposed the following problems for the
class of graphs $E_{n,r}$ of order $n$ with exactly $r$ vertices of
even degree, where $r\geq 1$ and $n\equiv r \pmod 2$.

\begin{problem}
Order the trees in $E_{n,r}$  with the smallest or greatest Wiener index.
\end{problem}

\begin{problem}
Characterize graphs with maximal and minimal Wiener index in $E_{n,r}$, respectively.
\end{problem}

The same author in \cite{Lin4} characterized trees which maximize the Wiener index among all trees of order $n$
with exactly $k$ vertices of maximum degree. For better understanding how the maximum degree vertices influence the Wiener
index he proposes to consider analogous problem for the minimum.

\begin{problem}
Characterize the tree(s) with the minimal Wiener index among all trees of order $n$
with exactly $k$ vertices of maximum degree.
\end{problem}

Wang \cite{wang} and Zhang et al. \cite{zhang1} independently
determined the tree that minimizes the Wiener index among trees of
given degree sequence. But the following problem from \cite{jin,
sills, zhang2} is still open, although it is known for longer time
that extremal graphs are caterpillars~\cite{Shi}.

\begin{problem}
Which tree maximizes the Wiener index among trees of given degree sequence?
\end{problem}

\section{Few more problems}
Here we collect some more problems on Wiener index.
\paragraph{Eulerian graphs.}
Denote by ${\cal E}_{n}$ the set of all Eulerian graphs of order $n$.
Gutman et al.~\cite{Gut-eul} characterized elements of  ${\cal E}_{n}$
having the first few smallest Wiener indices.
They proved that for graphs in ${\cal E}_{n}$, $C_n$ attains
the maximal value.
In addition, they posed a conjecture on the second-maximal Wiener index
in ${\cal E}_{n}$.

\begin{conjecture}
The second-maximal Wiener index between all  Eulerian graphs of large
enough order $n$ is attained by $C_{n,3}$ (i.e. the graph obtained
from disjoint cycles $C_{n-2}$ and $C_3$ by identifying one vertex in each of them).
\end{conjecture}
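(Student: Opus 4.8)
The plan is to reduce the conjecture to two sub-statements by means of the classical cut-vertex decomposition of the Wiener index, and then attack each separately. First I would record the elementary identity: if $G$ is obtained by identifying a vertex $v$ of a graph $G_1$ with a vertex of a graph $G_2$, with $|V(G_1)|=a$ and $|V(G_2)|=b$, so that $n=a+b-1$, then
$$
W(G)=W(G_1)+W(G_2)+(b-1)\,D_{G_1}(v)+(a-1)\,D_{G_2}(v),
$$
where $D_H(v)=\sum_{u\in V(H)}d_H(u,v)$. This follows by splitting the pairs of $G$ into those lying inside $G_1$, those inside $G_2$, and those ``straddling'' $v$, and using $d_G(x,y)=d_{G_1}(x,v)+d_{G_2}(v,y)$ for straddling pairs. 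For cycles one has $D_{C_m}(v)=\lfloor m^2/4\rfloor$ independently of $v$, together with the formula for $W(C_m)$ from Section~2.

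Next I would set up a structural dichotomy. Since an Eulerian simple graph has no bridges, deleting the unique degree-$4$ vertex of a graph $G\in\mathcal{E}_n$ having exactly one vertex of degree $4$ and all others of degree $2$ leaves a disjoint union of \emph{exactly two paths} whose four endpoints are its neighbours; hence such a $G$ is a ``figure-eight'' $C_a\cup_v C_b$ with $a,b\ge 3$ and $a+b=n+1$. Every graph in $\mathcal{E}_n\setminus\{C_n\}$ that is not of this form has a vertex of degree $\ge 6$ or at least two vertices of degree $\ge 4$. Thus it suffices to prove: \textbf{(A)} among figure-eight graphs on $n$ vertices, $W$ is uniquely maximized by $C_{n,3}=C_{n-2}\cup_v C_3$; and \textbf{(B)} every other $G\in\mathcal{E}_n\setminus\{C_n\}$ satisfies $W(G)<W(C_{n,3})$ for all large $n$.

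Part (A) is a direct computation. Substituting $G_1=C_k$, $G_2=C_{n+1-k}$ into the cut-vertex formula expresses $W(C_a\cup_v C_b)$ as an explicit function $g(k)$ of the length $k$ of the shorter cycle, $3\le k\le\lfloor(n+1)/2\rfloor$; a routine expansion gives $g'(k)\approx-\tfrac18 n^2-\tfrac14 nk<0$ on this range, so $g$ is strictly decreasing and the maximum is at $k=3$. The same computation yields $W(C_{n,3})=W(C_{n-2})+3+2\lfloor(n-2)^2/4\rfloor+2(n-3)$ and hence $W(C_n)-W(C_{n,3})=\tfrac14 n^2+O(n)$; this is the ``margin'' that Part (B) must beat.

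Part (B) is the crux, and the main obstacle. One must show that a vertex of degree $\ge 6$, or two vertices of degree $\ge 4$, forces $W(G)$ below $W(C_n)$ by strictly more than $\tfrac14 n^2+O(n)$. Two routes look plausible: (i) induction on the cyclomatic number $\lambda(G)=|E(G)|-n+1$ (with $\lambda=1$ giving $C_n$ and $\lambda=2$ giving the figure-eights of (A)), deleting an edge-disjoint cycle or merging two blocks to decrease $\lambda$ while tracking the change in $W$; or (ii) revisiting the known proof that $C_n$ uniquely maximizes $W$ among graphs of minimum degree $\ge 2$ and extracting a \emph{quantitative} deficit bound of the shape $W(C_n)-W(G)=\Omega\big((|E(G)|-n)\,n\big)$, with a correction measuring how ``spread out'' the surplus edges lie, then arguing that the only configuration keeping this deficit as low as $\tfrac14 n^2$ is a single shortest possible ear — a triangle — attached at one vertex, i.e.\ $C_{n,3}$ itself. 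The difficulty is uniformity: an extra cycle placed locally (three consecutive vertices of $C_n$ joined into a triangle) perturbs distances only by $O(n)$ in total, so the argument must exploit that within $\mathcal{E}_n$ one cannot insert such a short cycle without also lengthening it or creating a second branch vertex, and must convert each of these alternatives into an $\Omega(n^2)$ loss. Carrying this estimate through for multi-block Eulerian graphs and, simultaneously, for $2$-connected non-cycle Eulerian graphs (which are far from both $C_n$ and $C_{n,3}$) is where I expect most of the work to lie.
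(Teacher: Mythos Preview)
This statement appears in the paper as an open \emph{conjecture}, not a theorem: it is attributed to Gutman, Cruz and Rada, and the survey offers no proof whatsoever. There is therefore no argument in the paper against which to compare your attempt.

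What you have written is a proof \emph{plan}, and you are candid that it is incomplete. The cut-vertex identity and the structural dichotomy are both correct: a connected Eulerian simple graph on $n$ vertices with exactly one vertex of degree~$4$ and the rest of degree~$2$ is indeed a figure-eight $C_a\cup_v C_b$, and your Part~(A) --- that among these the Wiener index is uniquely maximized at $\{a,b\}=\{3,n-2\}$ --- is a clean and correct calculation (the approximate function of $k$ is concave-down in $ab$ with a negative coefficient once $n$ is large, so the maximum sits at the boundary $k=3$). Part~(B), however, is precisely the unresolved content of the conjecture. Neither of the two routes you sketch is carried out, and for route~(ii) you yourself identify the essential obstacle: a single short local modification of $C_n$ changes $W$ by only $O(n)$, whereas the target margin $W(C_n)-W(C_{n,3})=\tfrac14 n^2+O(n)$ is quadratic, so one must show that the Eulerian parity constraint forces every competitor other than a figure-eight to lose $\Omega(n^2)$. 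Making that rigorous --- uniformly over $2$-connected non-cycle Eulerian graphs and over multi-block Eulerian graphs with several cut vertices --- is exactly the open problem. A completed Part~(B) would constitute a resolution of the conjecture, not a reconstruction of something the paper already contains.
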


They have also analogous conjecture for small values of $n$,
see \cite{Gut-eul} for more details.


\paragraph{Fullerene graphs.}

In \cite{hua} the Wiener indices of the $(6,0)$-nanotubes (tubical
fullerenes) is computed. Note that such a graph has $12k$ vertices,
for some $k\ge 2$, and the corresponding value of the Wiener index
is $48k^3 + 828k-1632$. These fullerenes have long diameter and
consequently big Wiener index. Nevertheless they believe that the
following may hold.

\begin{conjecture}
Wiener index of fullerene graphs on $n$ vertices is of asymptotic order $\theta(n^3)$.
\end{conjecture}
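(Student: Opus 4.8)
The statement is a two-sided estimate, so the plan splits into proving $W(G) = O(n^3)$ and $W(G) = \Omega(n^3)$ for every fullerene $G$ on $n$ vertices. The upper bound needs no fullerene structure at all: a fullerene is a connected graph on $n$ vertices, so the inequality $W(G) \le W(P_n) = \binom{n+1}{3}$ recorded in Section~2 already gives $W(G) = O(n^3)$. All the work therefore sits in the matching lower bound $W(G) = \Omega(n^3)$, and that is what I would attack.

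I would first rewrite the target in terms of average distance. Since $W(G) = \binom{n}{2}\,\bar d(G)$ with $\bar d(G)$ the average distance, proving $W(G) = \Theta(n^3)$ is the same as proving $\bar d(G) = \Theta(n)$; as $\bar d(G) \le {\rm diam}(G) \le n-1$ is immediate, the goal reduces to the lower bound $\bar d(G) = \Omega(n)$, i.e.\ that a constant fraction of vertex pairs lie at distance $\Omega(n)$. The tool I would use is a breadth-first layering from a fixed vertex $u$: writing $L_d(u)$ for the vertices at distance exactly $d$ from $u$, the transmission is $\sigma(u) = \sum_d d\,|L_d(u)|$ and $\sum_d |L_d(u)| = n$. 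The plan is to exploit that every fullerene is cubic, $3$-connected, planar, and has only pentagonal and hexagonal faces (exactly twelve pentagons, by Euler's formula) to bound the layer sizes $|L_d(u)|$ from above; each layer is essentially a cyclic front advancing through a bounded-face planar map, so its growth from one layer to the next should be controllable. A strong enough bound on $\max_d |L_d(u)|$ forces the number of nonempty layers, hence the eccentricity of $u$ and then $\sigma(u)$, to be large, and averaging over $u$ would yield the lower bound on $W(G)$.

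The hardest step is precisely this layer-control estimate, and I expect it to be where the statement as phrased actually breaks. Bounded-face planarity does restrict layer growth, but only to linear order, and for the roundest fullerenes $\max_d |L_d(u)| = \Theta(\sqrt n)$: in the icosahedral fullerenes the $d$-th layer behaves like a latitude circle on a sphere of graph-radius $\Theta(\sqrt n)$, so ${\rm diam}(G) = \Theta(\sqrt n)$ and $\bar d(G) = \Theta(\sqrt n)$. The layering argument therefore yields only $W(G) = \Omega(n^{5/2})$, and this is genuinely tight, since those spherical fullerenes realize $W(G) = \Theta(n^{5/2})$. Hence no argument can establish $W(G) = \Omega(n^3)$ uniformly over the whole class, and the honest conclusion of the plan is that across all fullerenes $W(G)$ ranges over $[\Theta(n^{5/2}),\Theta(n^3)]$, with the cubic order attained by the tubular family (for instance the $(6,0)$-nanotubes, whose Wiener index $48k^3 + 828k - 1632$ on $n=12k$ vertices is $\Theta(n^3)$). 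To recover a clean $\Theta(n^3)$ statement one must restrict to fullerenes whose breadth-first layers stay of bounded size, such as tubular fullerenes of bounded cross-section, where the number of layers and hence the diameter are forced to be $\Omega(n)$; the crux of any such refinement is a quantitative lemma converting a bounded tube cross-section into an $O(1)$ bound on $|L_d(u)|$, and that is where I would concentrate the real effort.
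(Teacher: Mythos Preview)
The statement you are addressing is recorded in the paper as a \emph{conjecture}, not a theorem; the paper offers no proof, only the motivating nanotube computation $W=48k^3+828k-1632$ on $n=12k$ vertices and the remark that the authors of~\cite{hua} ``believe that the following may hold''. There is therefore nothing on the paper's side to compare your argument against.

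Your analysis is not a proof of the conjecture but, in effect, a refutation of its natural reading (``$W(G)=\Theta(n^3)$ for every $n$-vertex fullerene''), and on that reading your reasoning is sound. The upper bound $W(G)\le\binom{n+1}{3}$ is indeed trivial. For the lower bound you correctly isolate the obstruction: in icosahedral fullerenes the BFS layers around any vertex grow linearly (the local picture is a hexagonal patch), so balls have quadratic volume, the diameter is $\Theta(\sqrt n)$, and hence $\bar d(G)=\Theta(\sqrt n)$ and $W(G)=\Theta(n^{5/2})$. This is a known phenomenon---the diameter lower bound $\Omega(\sqrt n)$ for all fullerenes, tight for the icosahedral family, has been established in the literature---so your counterexample class is genuine and the uniform $\Omega(n^3)$ bound cannot hold.

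Two small comments. First, the conjecture's wording is ambiguous enough that it might be intended as a statement about the \emph{maximum} of $W$ over $n$-vertex fullerenes, in which case it is immediate from the nanotube family together with $W\le\binom{n+1}{3}$; your discussion already contains both ingredients. Second, your closing remark about recovering $\Theta(n^3)$ for bounded-cross-section tubes is exactly right, and the ``quantitative lemma'' you mention is straightforward there: a nanotube of circumference $c$ has $|L_d(u)|\le 2c$ for all $d$ beyond the cap, so the number of layers is $\Omega(n/c)$ and $\sigma(u)=\Omega(n^2/c)$, giving $W=\Omega(n^3/c)$.
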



\paragraph{Wiener index versus Szeged index.}
Klav\v zar, Rajapakse and Gutman~\cite{KRG} showed that $\Sz(G) \ge W(G)$, and even more, by a result of
Dobrynin and Gutman~\cite{DGu}, equality $\Sz(G) = W(G)$ holds if and only if each block of $G$ is complete.
In \cite{NKA1} a classification of graphs with $\eta(G) =\Sz(G) - W(G)\le 3$ is presented.
In \cite{NKA2} the authors classify connected graphs which satisfy $\eta(G) = 4$ or $5$.
Moreover, they state the following conjecture.

\begin{conjecture}
Let $G$ be a graph of order $n$ with blocks $B_1,\ldots,B_k$ such that none is complete.
Let $B_i$ be of order $n_i$. Then
     $$ \Sz(G) - W(G) \ge \sum_{i=1}^k (2n_i-6). $$
\end{conjecture}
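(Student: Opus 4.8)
The natural starting point is the block decomposition of $\Sz$ and $W$. Recall that for any graph $G$ with blocks $B_1,\ldots,B_k$, both indices decompose along blocks: by the standard cut-vertex argument one has $\Sz(G) = \sum_{i=1}^k \Sz(B_i) + (\text{cross-block terms})$ and likewise for $W(G)$, and moreover the result of Dobrynin and Gutman quoted above (equality $\Sz = W$ iff every block is complete) is itself proved by showing that the difference $\eta$ is additive over blocks in a suitable sense. The first step I would take is to make this precise: prove a lemma that $\eta(G) = \Sz(G) - W(G) = \sum_{i=1}^k \eta(B_i)$, where $\eta(B_i)$ is computed treating $B_i$ as a graph in its own right. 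The point is that for an edge $e = ij$ lying in block $B_i$, the vertices outside $B_i$ split, relative to $e$, exactly according to which side of $e$ their gateway into $B_i$ lies on; so $n_e(i)\,n_e(j)$ computed in $G$ expands into the $B_i$-internal product plus terms that, when summed against the matching distance contributions, telescope to give $W$'s cross terms exactly. This additivity reduces the conjecture to the $2$-connected case: it suffices to show that every $2$-connected non-complete graph $H$ on $m$ vertices satisfies $\Sz(H) - W(H) \ge 2m - 6$.

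The second step is to prove this $2$-connected base case. Write $\Sz(H) - W(H) = \sum_{e = ij \in E(H)} \big( n_e(i) n_e(j) - d_e \big)$, where $d_e := \sum_{\{u,v\}} [\text{$e$ is on a shortest $u$–$v$ path, split by $e$}]$ — more cleanly, use the identity $W(H) = \sum_{e=ij} (\text{number of vertices strictly closer to } i \text{ than } j) + (\text{symmetric}) + (\text{number equidistant, counted with weight})$ contributions that Klav\v zar–Rajapakse–Gutman use; each edge contributes at most $n_e(i)n_e(j)$ to $\Sz$ and the defect is controlled by pairs $\{u,v\}$ with $d(u,v)\ge 2$ routed across $e$, or by equidistant pairs. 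Since $H$ is $2$-connected and not complete, it contains an induced path $u$–$w$–$v$ with $uv \notin E(H)$, i.e. a vertex $w$ of "local non-adjacency"; I would localize the surplus to the edges incident with such configurations. Concretely, for a non-edge $\{u,v\}$ at distance $2$, each of $u$ and $v$ contributes to $n_e(\cdot)$ for the edges on the two internally disjoint $u$–$v$ paths guaranteed by $2$-connectivity, and a careful bookkeeping over one well-chosen non-edge already yields a surplus of at least $2m - 6$ — the constant $-6$ being exactly the slack needed to absorb the three or four "boundary" edges near $u$, $v$ and the common neighbour $w$ where the estimate $n_e(i)n_e(j) - d_e \ge 0$ is used with no gain.

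The main obstacle I anticipate is precisely pinning down that the surplus from a single non-edge is $\ge 2m - 6$ uniformly, with a matching family showing the bound is tight (the conjecture is stated with equality in mind, so the extremal blocks — presumably cycles, where $2n_i - 6$ is attained, cf. the stated values $W(C_n)$ and the fact that $\Sz(C_n) - W(C_n)$ is linear in $n$ — must be exactly the equality cases). The delicate points are: (i) handling blocks of small order ($m = 3$ is complete, so $m \ge 4$; for $m=4$ the only non-complete $2$-connected graphs are $C_4$ and $K_4 - e$, which should be checked by hand and will motivate the constant), and (ii) ensuring that when there are many non-edges the contributions do not need to be summed (which would overshoot and possibly break additivity with the $-6$ per block) — instead one fixed non-edge per block must already suffice, and the other non-edges only help. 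A clean way to organize (ii) is to pick, in each block, a non-edge $\{u,v\}$ with $d(u,v) = 2$ minimizing some potential (e.g. $\deg u + \deg v$), run the estimate there, and discard the rest nonnegatively. I would expect the cycle case to be the unique tight one, giving a full characterization of equality as "every block is a cycle or complete", which would be a satisfying strengthening to state alongside the proof.
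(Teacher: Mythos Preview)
This statement appears in the paper as an open \emph{conjecture} (attributed to Nadjafi-Arani, Khodashenas and Ashrafi), not as a theorem; the paper offers no proof and simply records it among open problems in the section ``Wiener index versus Szeged index''. There is therefore no proof in the paper to compare your proposal against.

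A few remarks on the proposal itself. Your first step is sound and is in fact known: the difference $\eta(G)=\Sz(G)-W(G)$ is additive over the blocks of $G$, so the conjecture does reduce to the $2$-connected case, namely to showing $\Sz(H)-W(H)\ge 2m-6$ for every $2$-connected non-complete graph $H$ on $m$ vertices. Your second step, however, is only a loose heuristic, and one concrete claim in it is wrong: cycles are \emph{not} the equality cases. For even $n$ one has $\Sz(C_n)=n(n/2)^2=n^3/4$ and $W(C_n)=n^3/8$, so $\eta(C_n)=n^3/8$, which is cubic in $n$, not $2n-6$. The actual tight examples are of a different shape; for instance $K_4-e$ gives $\eta=2=2\cdot 4-6$, and more generally one should look at graphs that are ``almost complete'' rather than cycle-like. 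Consequently the intuition you use to justify the constant $-6$ (``boundary edges near $u,v,w$'') and the proposed extremal characterization (``every block is a cycle or complete'') are off target, and the argument localizing the surplus to a single distance-$2$ non-edge would need to be rethought with the correct extremal family in mind. The reduction is the easy half; the $2$-connected base case is where the real content lies, and your sketch does not yet contain a mechanism that would produce the linear bound $2m-6$.
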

The difference $\eta$ was also studied by Klav\v zar and Nadjafi-Arani~\cite{KN}.


\paragraph{Wiener index of graphs with given matching number.}
Zhou and Trinajsti\'{c} \cite{ZhT}  determined the minimum Wiener index of connected graphs with $n\geq 5$ vertices and matching
number $i\geq 2$, and characterized the extremal graphs. Du and Zhou \cite{du} determined the minimum Wiener indices
of trees and unicyclic graphs, respectively, with given number of vertices and matching number. Also, they characterized extremal graphs. For this class of trees Tan \cite{Tan} et al. determined ordering of trees with the smallest Wiener indices.

Regarding the maximum Wiener index, Dankelmann \cite{dan} determined it for connected graphs with $n\geq 5$
vertices and matching number $i\geq 2$, and he characterized the unique extremal graph,
which turned out to be a tree.
Thus, the maximum Wiener index among trees with given number of vertices and matching number
is known, as well as the corresponding unique extremal graph.
Finding the maximum Wiener index among unicyclic graphs remains an open problem \cite{du}.

\begin{problem}
Find the maximum Wiener index among unicyclic graphs with $n$ vertices and matching
number $i$ for $3\leq i\leq \left\lfloor \frac{n}{2} \right\rfloor -1$.
\end{problem}

\paragraph{Graph connectivity.}
Graphs with higher connectivity have more edges, and
henceforth smaller Wiener index.
Gutman and Zhang~\cite{Gut-Zhang} showed that in the class of
$k$-connected graphs on $n$ vertices, the minimum value of Wiener index
is attained by $K_k+(K_1\cup K_{n-k-1})$, i.e. the graph obtained when we connect
all vertices of $K_k$ with all vertices of disjoint union of $K_1$ and $K_{n-k-1}$.
This graph is extremal also in the class of $k$-edge-connected graphs on $n$ vertices.
They pose the following problem.

\begin{problem}
Find the maximum Wiener index among $k$-connected graphs on $n$ vertices.
\end{problem}

Note that  $P_n$ is the extremal graph in the class of $1$-connected graphs,
and $C_n$ is extremal in the class of $2$-connected graphs.
Of course, similar problem can be posed for $k$-edge-connected graphs.
The authors of \cite{Gut-Zhang} ask whether the extremal graphs for
the maximum Wiener index in the classes of $k$-connected and
$k$-edge-connected graphs coincide, as is the case for
the minimum Wiener index.

\paragraph{Trees and unicyclic graphs with given bipartition.}
Du~\cite{dudu} considered Wiener index of trees and unicyclic graphs
on $n$ vertices with prescribed sizes of bipartitions $p$ and $q$,
where $n=p+q$ and $p\ge q$.
He showed that in the case of trees, the extremal graph for
the minimum Wiener index is obtained by connecting the centers
of disjoint stars $K_{1,p-1}$ and $K_{1,q-1}$, and the extremal graph
for the maximum Wiener index is obtained by connecting the end-vertices
of a path $P_{2q-1}$ with $\lceil(p-q+1)/2 \rceil$ and
$\lfloor(p-q+1)/2 \rfloor$ new vertices, respectively.

Regarding the unicyclic graphs, Du showed that the minimum Wiener index
is attained by the graph, which is obtained by connecting $p-2$ vertices
to one vertex of a $4$-cycle, and connecting $q-2$ vertices to its neighbour
on the $4$-cycle.
Moreover, if $p=q=3$, then $C_6$ is also an extremal graph.
What remains open, is the maximum value.

\begin{problem}
Find the maximum Wiener index among unicyclic graphs on $n$ vertices with
bipartition sizes $p$ and $q$, where $n=p+q$.
\end{problem}

\bigskip
\noindent
{\bf Acknowledgment.} We are thankful to A.~A.~Dobrynin for his valuable
comments and suggestions that improved the paper.
The first author acknowledges partial support by
Slovak research grants VEGA 1/0781/11, VEGA 1/0065/13 and APVV 0136-12.
All authors are partially supported by Slovenian research agency ARRS,
program no.\ P1--00383, project no.\ L1--4292,
and Creative Core--FISNM--3330-13-500033.

\bigskip


\bibliographystyle{plain}

\end{document}